\numberwithin{equation}{section}
\theoremstyle{definition}
\newtheorem{Def}{Definition}[section]
\theoremstyle{remark}
\newtheorem{Exa}[Def]{Example}
\newtheorem{Rem}[Def]{Remark}
\theoremstyle{plain}
\newtheorem{Prop}[Def]{Proposition}
\newtheorem{Cor}[Def]{Corollary}
\newtheorem{Thm}[Def]{Theorem}
\newtheorem{Lem}[Def]{Lemma}
\newcommand{\dfn}{\mathrel{\dot{=}}}
\newcommand{\st}{ \ ; \ }
\newcommand{\rarr}{\rightarrow}
\newcommand{\sset}{\subset}
\newcommand{\Z}{\mathbb{Z}}
\newcommand{\N}{\mathbb{N}}
\newcommand{\R}{\mathbb{R}}
\newcommand{\C}{\mathbb{C}}
\newcommand{\TR}[5]{\begin{array}{c c c c c}
    {#1} & : & {#3} & \longrightarrow & {#5}\\
    & & {#2} & \longmapsto & {#4}
  \end{array}
}
\newcommand{\transp}[1]{\prescript{\mathrm{t} \!}{}{{#1}}}
\newcommand{\PT}{\prescript{\mathrm{t} \!}{}{P}}
\newcommand{\ort}{\mathrm{o}}
\DeclareMathOperator{\Span}{\mathrm{span}}
\DeclareMathOperator{\ran}{\mathrm{ran}}
\newcommand{\del}{\partial}
\newcommand{\dd}{\mathrm{d}}
\newcommand{\D}{\mathscr{D}}
\newcommand{\cinfty}{\mathscr{C}^\infty}
\newcommand{\sob}{\mathscr{H}}
\newcommand{\LL}{\mathrm{L}}
\DeclareMathOperator{\lie}{\mathrm{Lie}}
\DeclareMathOperator{\SU}{\mathrm{SU}}
\newcommand{\TT}{\mathbb{T}}
\newcommand{\gr}[1]{\mathfrak{#1}}
\DeclareMathOperator{\ad}{\mathrm{ad}}
\newcommand{\vv}[1]{\mathrm{#1}}
\author{Gabriel Ara\'{u}jo}
\address{Universidade de S{\~a}o Paulo, ICMC-USP, S{\~a}o Carlos, SP, Brazil}
\email{\texttt{gccsa@icmc.usp.br}}
\author{Igor A.~Ferra}
\address{Universidade Federal do ABC, CMCC-UFABC, S{\~a}o Bernardo do Campo, SP, Brazil}
\email{\texttt{ferra.igor@ufabc.edu.br}}
\author{Luis F.~Ragognette}
\address{Universidade Federal de S{\~a}o Carlos, DM-UFSCar, S{\~a}o Carlos, SP, Brazil}
\email{\texttt{luisragognette@dm.ufscar.br}}
\thanks{This work was supported by the S{\~a}o Paulo Research Foundation (FAPESP, grants~2016/13620-5 and~2018/12273-5).}
\keywords{Sums of squares, global solvability, invariant operators.}
\subjclass[2020]{35A01 (primary), 35R01, 35R03 (secondary)}
\title[]{Global solvability and propagation of regularity of sums of squares on compact manifolds}
\begin{document}

\begin{abstract} We investigate global solvability, in the framework of smooth functions and Schwartz distributions, of certain sums of squares of vector fields defined on a product of compact Riemannian manifolds $T \times G$, where $G$ is further assumed to be a Lie group. As in a recent article due to the authors, our analysis is carried out in terms of a system of left-invariant vector fields on $G$ naturally associated with the operator under study, a simpler object which nevertheless conveys enough information about the original operator so as to fully encode its solvability. As a welcome side effect of the tools developed for our main purpose, we easily prove a general result on propagation of regularity for such operators.
\end{abstract}

\maketitle


\section{Introduction}

In this work we present some results concerning the global solvability of linear differential operators on compact manifolds. Inspired by previous investigations of similar questions on tori~\cite{petronilho02}, our main result here characterizes global solvability for a class of sums of squares of vector fields in a product manifold $T\times G$, where $T$ will be assumed a Riemannian manifold and $G$ a Lie group, both compact, connected and oriented. More precisely, we consider operators of the following sort
\begin{align}
  P &\dfn \Delta_T - \sum_{\ell = 1}^N \Big(\sum_{j = 1}^m a_{\ell j}(t) \vv{X}_j + \vv{W}_\ell \Big)^2 \label{eq:Pderf_intro}
\end{align}
where: $\Delta_T$ is the Laplace-Beltrami operator associated with the underlying metric on $T$ (acting on functions); $a_{\ell j}$ are smooth, real-valued functions on $T$; $\vv{W}_\ell$ are skew-symmetric real vector fields in $T$; and $\vv{X}_1, \ldots, \vv{X}_m \in \gr{g}$ is a basis of left-invariant vector fields in $G$ (as usual $\gr{g}$ denotes the Lie algebra of $G$, and corresponds to the space of real vector fields with constant coefficients when $G$ is the $m$-dimensional torus $\mathbb{T}^m$).

The present article is thus a natural continuation of~\cite{afr20}, in which the authors classified the global hypoellipticity of $P$ in $T \times G$ in terms of the global hypoellipticity of a system of left-invariant vector fields $\mathcal{L}$ in $G$. Namely, an $\vv{L} \in \gr{g}$ belongs to $\mathcal{L}$ if and only if there are $\ell\in \{1, \ldots, N\}$ and $t\in T$ such that
\begin{align*}
  \mathrm{L} &= \sum_{j = 1}^m a_{\ell j}(t) \vv{X}_j.
\end{align*}
Here, the connection between global properties of the  operator with variable coefficients $P$ on $T \times G$ with those of the system of vector fields with ``constant coefficients'' $\mathcal{L}$ on $G$ -- a somewhat simpler object -- is further explored. Our Theorem~\ref{Thm:main-P} relates their global solvability properties, and generalizes previous results~\cite[Theorem 1.1]{petronilho02} in which the global solvability of
\begin{align*}
  P &\dfn -\sum_{k = 1}^n \partial_{t_k}^2 - \Big(\sum_{j=1}^m a_j(t)\partial_{x_j} \Big)^2  
\end{align*}
is fully characterized on $\TT^n_t \times \TT^m_x$ by means of Diophantine conditions. The relationship between Petronilho's conditions and ours is made explicit in Section~\ref{sec:examples}, where we specialize our discussion to the commutative case.

It should be pointed out that in order to attain all the equivalences in Theorem~\ref{Thm:main-P} we require $\mathcal{L}$ to satisfy an extra, technical condition~\eqref{it:thm15_hyp1}, which already appeared in~\cite{afr20}: roughly speaking, it says that the linear span of $\mathcal{L}$ can be decomposed into some commutative ``blocks'' in a precise manner. While such a property automatically holds on a torus, it is still flexible enough in the non-commutative setting so as to allow us to exhibit a myriad of positive examples. The necessity of this assumption, either here or in~\cite{afr20}, is still an open problem though.

Although Definition~\ref{def:solvability_compact} brings in three different notions of global solvability, Theorem~\ref{Thm:main-P} ensures that they are all equivalent for our $P$. We go further and give one more: a weakened version of global hypoellipticity (``modulo $\ker P$'') which we call, in a non-standard fashion, \emph{almost global hypoellipticity} -- $\mathrm{(AGH)}$ for short; see Definition~\ref{Def:AGH}. The idea of quotienting out the kernel of $P$ is very natural and not entirely new, and seems to date back in some form or another at least to the works of H{\"o}rmander on operators of simple real characteristics. We then take the opportunity to develop this notion and its relations to global solvability in a much broader context.

It is proverbial that global hypoellipticity of a LPDO $Q$ on a compact manifold $M$ implies global solvability of its transpose $\transp{Q}$ (see~\cite{treves_tvs} for the classical local analogue). But there are examples in the literature that show classes of operators whose global hypoellipticity imply their own global solvability e.g.~for certain vector fields on a torus~\cite[Corollary~3.1]{hounie79} \cite[Theorem~3.1]{petronilho11}, and when there exists a suitable elliptic operator that commutes with $Q$~\cite[Theorem~13]{gw73}, to name a few. Here, by using the notion of almost global hypoellipticity, we prove in Theorem~\ref{thm:general_agh_closedrange} that in particular this fact holds true in general, i.e., that every globally hypoelliptic operator in $M$ is globally solvable (meaning here that it has closed range in $\cinfty(M)$). Actually, the operator $Q$ does not even need to be a differential one, as only mild continuity properties are required in that proof. Of course the converse is false in general since there are globally solvable operators whose kernel in $\D'(M)$ is infinite dimensional (which cannot happen if the said kernel is contained in a fixed Sobolev space), for instance $P \dfn D_t+D_x$ in $\mathbb T^2$ -- precisely the phenomenon that almost global hypoellipticity aims to accommodate. Therefore, this result shows another big difference when dealing with global notions instead of local ones, since the Kannai operator~\cite{kannai71} is locally hypoelliptic operator yet not locally solvable.

While the general converse to Theorem~\ref{thm:general_agh_closedrange} is false (Example~\ref{exa:noconverseAGH}), in Corollary~\ref{prop:gs_agh} we devise a partial one assuming an extra hypothesis about density in the kernel of the transpose operator. Since our operator $P$~\eqref{eq:Pderf_intro} satisfies this additional hypothesis, we were able to prove that its global solvability is in fact equivalent to its almost global hypoellipticity. Another fascinating aspect of this notion in connection with our main line of investigation is that although a non-Abelian compact Lie group cannot carry real left-invariant vector fields which are globally hypoelliptic~\cite{gw73b} (in sharp contrast with the situation one finds in the torus~\cite{gw72}), one does not have to dig too deep into the non-commutative realm to find left-invariant vector fields that are~$\mathrm{(AGH)}$ (Example~\ref{exa:su2}).

As a second application of the theoretical framework developed here and in~\cite{afr20}, we easily prove a result (Theorem~\ref{thm:propagation}) on propagation of regularity for~\eqref{eq:Pderf_intro} in the same vein as~\cite[Theorem~1]{hp02}, but much more general -- it does not even require the technical assumption~\eqref{it:thm15_hyp1}.

This work is divided as follows. In Section~\ref{sec:prel} we introduce the machinery and notation that will be used throughout the paper, eventually referring the reader to~\cite{afr20} for more details. Our abstract results are proved in Section~\ref{sec:gs-gen}, and in Section~\ref{sec:systems_AGH} we discuss some results about almost global hypoellipticity of systems of vector fields, with particular emphasis on left-invariant vector fields on Lie groups. Sections~\ref{Sec:tube-type} and~\ref{sec:charac-P} are then devoted to address the characterization of the global solvability of $P$, and a wide class of examples is obtained through the results presented in Section~\ref{sec:examples}, where we translate the previous ones into Diophantine conditions on the torus. We end this paper with a result about propagation of regularity in Section~\ref{sec:singularities}.

\section{Preliminaries}  \label{sec:prel}

Let $M$ be a compact, connected, oriented smooth manifold, for simplicity carrying a Riemannian metric: its underlying volume form $\dd V_M$ allows us to regard $\D'(M)$, the space of Schwartz distributions on $M$, as the topological dual of $\cinfty(M)$. We denote by $\Delta_M$ the Laplace-Beltrami operator associated with our metric, by $\sigma(\Delta_M)$ its spectrum and by $E^M_\lambda$ the eigenspace associate with $\lambda \in \sigma(\Delta_M)$. We then define the Fourier projection map $\mathcal{F}^{M}_{\lambda}: \D'(M) \to E^M_\lambda$ as follows: given $f \in \D'(M)$ take $\mathcal{F}^M_\lambda(f)$ as the unique element in $E_\lambda^M$ satisfying
\begin{align*}
  \langle \mathcal{F}^M_\lambda(f), \phi \rangle_{L^2(M)} &= \langle f, \bar{\phi} \rangle, \quad \forall \phi \in E^M_\lambda.
\end{align*}
One can then characterize smooth functions and distributions in terms of the decay of their projections:
\begin{Prop} \label{prop:charac_smoothness_fourier_proj} For a sequence $a = (a(\lambda))_{\lambda \in \sigma(\Delta_M)}$ where $a(\lambda) \in E_\lambda^M$ for all $\lambda \in \sigma(\Delta_M)$ the following characterizations hold:
  \begin{enumerate}
  \item $a = \mathcal{F}^M(f)$ for some $f \in \cinfty(M)$ if and only if for every $s > 0$ there exists $C > 0$ such that
    \begin{align*}
      \| a(\lambda) \|_{L^2(M)} &\leq C (1 + \lambda)^{-s}, \quad \forall \lambda \in \sigma(\Delta_M).
    \end{align*}
  \item $a = \mathcal{F}^M(f)$ for some $f \in \D'(M)$ if and only if there exist $C, s > 0$ such that
    \begin{align*}
      \| a(\lambda) \|_{L^2(M)} &\leq C (1 + \lambda)^{s}, \quad \forall \lambda \in \sigma(\Delta_M).
    \end{align*}
  \end{enumerate}
\end{Prop}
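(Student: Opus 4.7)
The plan is to rely on the spectral decomposition of $L^{2}(M)$ induced by $\Delta_{M}$ together with the standard characterization of the Sobolev scale $H^{s}(M)$ via Fourier projections, and then translate $L^{2}$-summability into the pointwise bounds that appear in the statement. Recall that the spectrum is discrete with finite-dimensional eigenspaces, $L^{2}(M) = \widehat{\bigoplus}_{\lambda \in \sigma(\Delta_{M})} E^{M}_{\lambda}$, and that $f \in H^{s}(M)$ if and only if
\begin{align*}
  \sum_{\lambda \in \sigma(\Delta_{M})} (1 + \lambda)^{2s} \| \mathcal{F}^{M}_{\lambda}(f) \|_{L^{2}(M)}^{2} &< \infty,
\end{align*}
while $\cinfty(M) = \bigcap_{s > 0} H^{s}(M)$ and $\D'(M) = \bigcup_{s > 0} H^{-s}(M)$.

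For the first item, the forward direction is immediate: if $f \in \cinfty(M)$ then $\Delta_{M}^{k}f \in L^{2}(M)$ for every $k \in \N$, and since Fourier projections intertwine $\Delta_{M}$ with multiplication by $\lambda$, Parseval gives $\sum_{\lambda} \lambda^{2k} \|a(\lambda)\|_{L^{2}}^{2} = \|\Delta_{M}^{k}f\|_{L^{2}}^{2} < \infty$, which forces $\lambda^{k}\|a(\lambda)\|_{L^{2}} \leq \|\Delta_{M}^{k}f\|_{L^{2}}$ termwise; absorbing the zero eigenvalue into the factor $(1+\lambda)^{-s}$ yields the claimed estimate for any $s = k$, and hence for any $s > 0$. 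For the converse I would invoke Weyl's asymptotic: the number of eigenvalues (counted with multiplicity) below $R$ is $O(R^{n/2})$, where $n = \dim M$, so $\sum_{\lambda} (1 + \lambda)^{-M} < \infty$ whenever $M$ is large enough. Given pointwise decay $\|a(\lambda)\|_{L^{2}} \leq C_{s}(1+\lambda)^{-s}$ for every $s > 0$, I simply pick $s$ so large that the resulting $L^{2}$-sum weighted by $(1+\lambda)^{2t}$ converges, obtaining $f \in H^{t}(M)$ for every $t$, hence $f \in \cinfty(M)$ by Sobolev embedding.

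For the second item the argument is dual. If $f \in \D'(M)$ then $f \in H^{-s_{0}}(M)$ for some $s_{0} > 0$, whence $\sum_{\lambda} (1+\lambda)^{-2s_{0}}\|a(\lambda)\|_{L^{2}}^{2} < \infty$, which termwise yields $\|a(\lambda)\|_{L^{2}} \leq C(1+\lambda)^{s_{0}}$. Conversely, if $\|a(\lambda)\|_{L^{2}} \leq C(1+\lambda)^{s}$ for some $s > 0$, then choosing $t > 0$ large enough so that $\sum_{\lambda}(1+\lambda)^{2s - 2t} < \infty$ (again by Weyl's law) shows $f \in H^{-t}(M) \subset \D'(M)$.

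The main obstacle is purely the bookkeeping between $L^{2}$-summability and uniform pointwise decay; the only nontrivial ingredient is Weyl's eigenvalue counting estimate, which ensures that $\sum_{\lambda}(1+\lambda)^{-M}$ converges once $M$ exceeds $n/2$, and this is what allows us to trade summable estimates for pointwise ones with an arbitrary (but finite) loss in the exponent.
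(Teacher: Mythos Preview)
Your argument is correct and is precisely the standard one: spectral decomposition, the Sobolev scale expressed through eigenprojections, and Weyl's law to control $\sum_{\lambda}(1+\lambda)^{-p}$. The paper does not actually prove this proposition but simply refers the reader to~\cite{araujo19}, so there is no in-paper proof to compare against; one cosmetic point is that you use the symbol $M$ both for the manifold and for an exponent, which you should rename.
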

For more details see e.g.~\cite{araujo19}. We also recall how the standard topologies on $\cinfty(M)$ and $\D'(M)$ may be given in terms of a suitable scale of Sobolev spaces
\begin{align*}
  \sob^k(M) &\dfn \left\{ f \in L^2(M) \st (I + \Delta_M)^k f \in L^2(M) \right\}, \quad k \in \Z_+.
\end{align*}
Each $\sob^k(M)$ carries a natural Banach space topology with norm
\begin{align}
  \|f \|_{\sob^k(M)} &\dfn \|(I + \Delta_M)^k f \|_{L^2(M)}, \label{eq:total_sobolev_norm}
\end{align}
which turns the inclusion $\sob^{k + 1} (M) \hookrightarrow \sob^k(M)$ into a compact map: the topology of $\cinfty(M)$ is then precisely the locally convex projective limit topology induced by the family $\{\sob^k(M)\}_{k \in \Z_+}$, and by further defining $\sob^{-k}(M)$ as the dual of $\sob^{k}(M)$ for every $k \in \Z_+$ one can prove that the strong dual topology on $\D'(M) = \cinfty(M)'$ coincides with the injective limit topology induced by $\{\sob^{-k}(M)\}_{k \in \Z_+}$ -- and this will always be the topology we shall endow $\D'(M)$ with. In particular, $\cinfty(M)$ and $\D'(M)$ are a FS and a DFS space, respectively, each one the strong dual of the other~\cite{komatsu67}.

\subsection{Partial Fourier expansions} \label{sec:partial_exp}

Following~\cite[Section~2]{afr20}, given $T, G$ two compact, connected and oriented Riemannian manifolds, whose dimensions will be denoted by $n \dfn \dim T$ and $m \dfn \dim G$, one may appeal to the product structure of $M \dfn T \times G$ (always endowed with the product metric) to give a finer description of the constructions above. The proposition below summarizes the main properties we will be implicitly using. On time, if $f$ is a function or distribution on $G$ we will also regard it as an object defined on $M$ which does not depend on the first variable (and analogously for distributions on $T$) without further notice; when clarity demands we will explicitly write it as $1_T \otimes f$ where $1_T$ stands for the constant function on $T$. Along the same lines a differential operator $P$ defined on either factor will be understood as an operator on $M$, lifted in the obvious fashion.
\begin{Prop} \label{prop:relationship_LBs} \hfill
  \begin{enumerate}
  \item \label{it:delta12} $\Delta_M = \Delta_T + \Delta_G$ as differential operators on $T \times G$.
  \item \label{it:hilbert_basis_product} For each $\mu \in \sigma(\Delta_T)$ and $\lambda \in \sigma(\Delta_G)$ let
    \begin{itemize}
    \item $\{ \psi^\mu_i \st 1 \leq i \leq d^T_\mu \}$ be a basis for $E^T_\mu$, orthonormal w.r.t.~the $L^2(T)$ inner product, and
    \item $\{ \phi^\lambda_j \st 1 \leq j \leq d^G_\lambda \}$ be a basis for $E^G_\lambda$, orthonormal w.r.t.~the $L^2(G)$ inner product.
    \end{itemize}
    Then the set
    \begin{align*}
      \mathcal{S} &\dfn \{ \psi^\mu_i \otimes \phi^\lambda_j \st 1 \leq i \leq d^T_\mu, \ 1 \leq j \leq d^G_\lambda, \ \mu \in \sigma(\Delta_T), \ \lambda \in \sigma(\Delta_G) \}
    \end{align*}
    is a Hilbert basis for $L^2(T \times G)$.
  \item \label{it:alphaissum} Every $\alpha \in \sigma(\Delta_M)$ is of the form $\alpha = \mu + \lambda$ for some $\mu \in \sigma(\Delta_T)$ and $\lambda \in \sigma(\Delta_G)$.
  \item \label{it:relationshipeigens} If for each $\alpha \in \R_+$ we define the set
    \begin{align*}
      \mathcal{P}(\alpha) &\dfn \{ (\mu, \lambda) \in \sigma(\Delta_T) \times \sigma(\Delta_G) \st \mu + \lambda = \alpha \} 
    \end{align*}
    -- which is finite -- then the eigenspace of $\Delta_M$ associated with $\alpha \in \sigma(\Delta_M)$ is precisely
    \begin{align*}
      E_\alpha^M &= \bigoplus_{(\mu, \lambda) \in \mathcal{P}(\alpha)} E^T_{\mu} \otimes E^G_{\lambda}
    \end{align*}
    and an orthonormal basis for this space w.r.t.~the $L^2(T \times G)$ inner product is
    \begin{align*}
      \{ \psi^\mu_i \otimes \phi^\lambda_j  \st 1 \leq i \leq d^T_\mu, \ 1 \leq j \leq d^G_\lambda, \ (\mu, \lambda) \in \mathcal{P}(\alpha) \}. 
    \end{align*}
  \end{enumerate}
\end{Prop}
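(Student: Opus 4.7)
My plan is to treat the four items in the order they are listed, since each essentially feeds into the next.

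\textbf{Step 1 (item \eqref{it:delta12}).} I would verify the decomposition of $\Delta_M$ by a direct coordinate computation. Pick product charts $(t, x) = (t^1, \ldots, t^n, x^1, \ldots, x^m)$ on a neighborhood $U_T \times U_G \subset M$. Because $M$ carries the product Riemannian metric, its metric tensor is block-diagonal: the block corresponding to $T$ depends only on $t$, the block corresponding to $G$ only on $x$, and both volume densities $\sqrt{|g_T|}, \sqrt{|g_G|}$ are likewise functions of one set of variables only. Writing $\Delta_M = -\frac{1}{\sqrt{|g|}} \partial_i ( \sqrt{|g|} \, g^{ij} \partial_j)$ and using $\sqrt{|g|} = \sqrt{|g_T|}\sqrt{|g_G|}$ together with the vanishing mixed metric components, the sum splits cleanly into the expression for $\Delta_T$ (acting only in $t$) plus the expression for $\Delta_G$ (acting only in $x$). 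Since these local identities agree on overlaps and the factors are orientable, they glue to the global identity $\Delta_M = \Delta_T + \Delta_G$ on $\cinfty(M)$.

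\textbf{Step 2 (item \eqref{it:hilbert_basis_product}).} This is the standard fact that a Hilbert-space tensor product of orthonormal bases is an orthonormal basis of the completed tensor product; I would invoke $L^2(T \times G) \cong L^2(T) \widehat\otimes L^2(G)$ (Fubini plus density of finite linear combinations of products), combined with the fact that each family $\{\psi_i^\mu\}_{\mu, i}$ and $\{\phi_j^\lambda\}_{\lambda, j}$ is a Hilbert basis of its respective factor by spectral theory of $\Delta_T$ and $\Delta_G$ on compact manifolds. Orthonormality of $\mathcal{S}$ then follows from $\langle \psi_i^\mu \otimes \phi_j^\lambda,\psi_{i'}^{\mu'} \otimes \phi_{j'}^{\lambda'}\rangle_{L^2(M)} = \langle \psi_i^\mu, \psi_{i'}^{\mu'}\rangle_{L^2(T)}\,\langle \phi_j^\lambda, \phi_{j'}^{\lambda'}\rangle_{L^2(G)}$.

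\textbf{Step 3 (items \eqref{it:alphaissum} and \eqref{it:relationshipeigens}).} Combining Steps 1 and 2, each basis vector $\psi_i^\mu \otimes \phi_j^\lambda$ is an eigenfunction of $\Delta_M = \Delta_T + \Delta_G$ with eigenvalue $\mu + \lambda$: indeed $\Delta_T$ acts only in the first tensor slot and $\Delta_G$ only in the second. Thus $\mathcal{S}$ is an orthonormal basis of simultaneous eigenfunctions of $\Delta_M$, so $\sigma(\Delta_M) \subset \{\mu + \lambda : \mu \in \sigma(\Delta_T), \lambda \in \sigma(\Delta_G)\}$, proving \eqref{it:alphaissum}. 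The reverse inclusion (on the subset actually appearing as eigenvalues) is automatic because each $\mu + \lambda$ is realized by some $\psi_i^\mu \otimes \phi_j^\lambda \neq 0$. For \eqref{it:relationshipeigens}, the inclusion $\bigoplus_{(\mu,\lambda)\in\mathcal{P}(\alpha)} E_\mu^T \otimes E_\lambda^G \subset E_\alpha^M$ is immediate. For the reverse inclusion, any $u \in E_\alpha^M$ expands as $u = \sum_{\mu',\lambda',i,j} c_{\mu'\lambda' i j}\, \psi_i^{\mu'}\otimes \phi_j^{\lambda'}$ by Step 2; applying $\Delta_M$ and using uniqueness of the expansion forces $c_{\mu'\lambda' i j} = 0$ whenever $\mu' + \lambda' \neq \alpha$, which places $u$ in the claimed sum. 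Finiteness of $\mathcal{P}(\alpha)$ is clear since $\sigma(\Delta_T), \sigma(\Delta_G) \subset [0, \infty)$ are discrete, so only finitely many pairs sum to $\alpha$.

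The only non-trivial ingredient is Step 1, the local/global computation that $\Delta_M$ splits on the product metric; Steps 2 and 3 are then formal manipulations with the spectral decomposition. I do not anticipate any real obstacle once the splitting in Step 1 is established.
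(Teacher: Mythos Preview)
Your proposal is correct and follows the standard argument; the paper itself does not supply a proof of this proposition but simply states it as a summary of known facts, referring to~\cite[Section~2]{afr20} for details. Your Steps~1--3 are exactly the expected route (local block-diagonal splitting of the product Laplacian, tensor-product Hilbert basis, then reading off the eigenspace decomposition), so there is nothing to compare.
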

Still following~\cite{afr20}, we can consider ``partial'' Fourier expansions in $T \times G$. If $f \in \D'(T \times G)$ and $\lambda \in \sigma(\Delta_G)$ then $\mathcal F_\lambda^G (f) \in \D'(T;E_\lambda^G) \cong \D'(T)\otimes E_\lambda^G$ is characterized by the following property: given any orthonormal basis $\{\phi_1^\lambda,\ldots,\phi_{d_\lambda^G}^\lambda\}$ of $E_\lambda^G$ we can write
\begin{align*}
  \mathcal{F}_\lambda^G (f) &= \sum_{i=1}^{d_\lambda^G} \mathcal{F}_\lambda^G(f)_i \otimes \phi_i^\lambda,
\end{align*}
where $\mathcal F_\lambda^G(f)_i \in \D'(T)$ is defined by
\begin{align*}
  \langle \mathcal F_\lambda^G(f)_i, \psi \rangle &\dfn \langle f, \psi \otimes \overline{\phi^\lambda_i} \rangle, \quad \forall \psi \in \cinfty(T).
\end{align*}
The ``total'' Fourier projection of $f$ can then be recovered from the partial ones as
\begin{align}
  \mathcal{F}^{M}_\alpha(f) &= \sum_{\mu+\lambda=\alpha} \mathcal{F}^{T}_\mu \mathcal{F}^G_\lambda(f), \quad \alpha \in \sigma(\Delta_M), \label{eq:total_from_partials}
\end{align}
thanks to Proposition~\ref{prop:relationship_LBs} -- actually, any $f \in \D'(T \times G)$ can be written in terms of the orthonormal basis~$\mathcal{S}$ as
\begin{align}
  f &= \sum_{\alpha \in \sigma(\Delta_M)} \sum_{\mu + \lambda = \alpha} \sum_{j=1}^{d^T_\mu} \sum_{i=1}^{d_\lambda^G} \left\langle f, \overline{\psi^\mu_j\otimes \phi^\lambda_i} \right\rangle \psi^\mu_j\otimes \phi^\lambda_i \label{eq:fourier_in_bases}
\end{align}
with convergence in $\D'(T \times G)$ (resp.~in $\cinfty(T \times G)$, provided $f$ is smooth). Appealing to Weyl's asymptotic estimates~\cite[p.~155]{chavel_eigenvalues} and Proposition~\ref{prop:relationship_LBs} one then translates Proposition~\ref{prop:charac_smoothness_fourier_proj} as follows:
\begin{Cor} \label{cor:charac_smoothness_fourier_proj} A formal series
  \begin{align*}
    \sum_{\alpha \in \sigma(\Delta_M)} \sum_{\mu + \lambda = \alpha} \sum_{j=1}^{d^T_\mu} \sum_{i=1}^{d_\lambda^G} f_{ij}^{\mu \lambda} \ \psi^\mu_j\otimes \phi^\lambda_i, \quad f_{ij}^{\mu \lambda} \in \C,
  \end{align*}
  represents, in the sense of~\eqref{eq:fourier_in_bases}:
  \begin{enumerate}
  \item an $f \in \cinfty(T \times G)$ if and only if for every $s > 0$ there exists $C > 0$ such that
    \begin{align*}
      | f_{ij}^{\mu \lambda} | &\leq C (1 + \mu + \lambda)^{-s}, \quad \forall i, j, \mu, \lambda;
    \end{align*}
  \item an $f \in \D'(T \times G)$ if and only if there exist $C, s > 0$ such that
    \begin{align*}
      | f_{ij}^{\mu \lambda} | &\leq C (1 + \mu + \lambda)^{s}, \quad \forall i, j, \mu, \lambda.
    \end{align*}
  \end{enumerate}
  In both cases, the series~\eqref{eq:fourier_in_bases} converges in the appropriate topology.
\end{Cor}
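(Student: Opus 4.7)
The plan is to deduce the corollary from Proposition~\ref{prop:charac_smoothness_fourier_proj} applied to $M = T \times G$, exploiting the product description of the eigenspaces of $\Delta_M$ furnished by Proposition~\ref{prop:relationship_LBs} together with Weyl's polynomial bound on the counting function of $\sigma(\Delta_M)$ to pass between total $L^2$-norms and individual coefficients.

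First I would record the pivotal identity: by Proposition~\ref{prop:relationship_LBs}(\ref{it:relationshipeigens}), the family $\{\psi^\mu_j \otimes \phi^\lambda_i\}$ indexed by $1 \leq j \leq d^T_\mu$, $1 \leq i \leq d^G_\lambda$ and $(\mu,\lambda) \in \mathcal{P}(\alpha)$ is an orthonormal basis of $E^M_\alpha$, so combining this with~\eqref{eq:total_from_partials} yields
\begin{align*}
  \|\mathcal{F}^M_\alpha(f)\|_{L^2(M)}^2 &= \sum_{(\mu,\lambda) \in \mathcal{P}(\alpha)} \sum_{j=1}^{d^T_\mu} \sum_{i=1}^{d^G_\lambda} |f_{ij}^{\mu\lambda}|^2
\end{align*}
for every $f \in \D'(T \times G)$. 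Both directions of the statement then reduce to this identity.

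For the ``only if'' parts, I would observe that each individual coefficient satisfies $|f_{ij}^{\mu\lambda}| \leq \|\mathcal{F}^M_{\mu+\lambda}(f)\|_{L^2(M)}$, whence Proposition~\ref{prop:charac_smoothness_fourier_proj} directly provides the claimed bounds in $(1+\mu+\lambda)$ (rapid decay in the smooth case, polynomial growth in the distributional one). For the ``if'' parts, assuming the prescribed estimates on the $f_{ij}^{\mu\lambda}$, the identity above yields
\begin{align*}
  \|\mathcal{F}^M_\alpha(f)\|_{L^2(M)}^2 &\leq C^2 (1+\alpha)^{\pm 2s} \cdot \dim E^M_\alpha,
\end{align*}
with the sign of the exponent dictated by the respective case. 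Weyl's asymptotic estimate bounds the counting function $N(\alpha) \dfn \sum_{\alpha' \leq \alpha} \dim E^M_{\alpha'}$ by a polynomial in $\alpha$ of degree $(n+m)/2$, hence a fortiori $\dim E^M_\alpha \leq C'(1+\alpha)^{(n+m)/2}$. Readjusting $s$ and invoking Proposition~\ref{prop:charac_smoothness_fourier_proj} once more closes the argument.

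Convergence of~\eqref{eq:fourier_in_bases} in the appropriate topology then follows from the characterization of the topologies of $\cinfty(M)$ and $\D'(M)$ via the Sobolev scale~\eqref{eq:total_sobolev_norm}: in the smooth case the partial sums over $\alpha \leq N$ form a Cauchy net in every $\sob^k(M)$ thanks to the rapid decay, while in the distributional case an analogous argument runs in the dual scale. The main technical obstacle is precisely keeping track of the combinatorial factor $\dim E^M_\alpha$: the eigenspace at level $\alpha$ aggregates contributions from every $(\mu,\lambda) \in \mathcal{P}(\alpha)$, and only Weyl's polynomial bound prevents this aggregation from destroying the rapid-decay or polynomial-growth estimate on $\|\mathcal{F}^M_\alpha(f)\|_{L^2(M)}$.
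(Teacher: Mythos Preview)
Your proposal is correct and follows essentially the same approach the paper sketches: the paper simply states that the corollary is obtained ``appealing to Weyl's asymptotic estimates and Proposition~\ref{prop:relationship_LBs}'' to translate Proposition~\ref{prop:charac_smoothness_fourier_proj}, and you have correctly filled in those details. Your use of the orthonormal-basis identity for $\|\mathcal{F}^M_\alpha(f)\|_{L^2(M)}^2$ and the Weyl bound on $\dim E^M_\alpha$ is exactly what the paper intends.
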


The digression above leads one to consider families of ``mixed Sobolev norms'' on $T \times G$, namely
\begin{align*}
  \| f \|_{\sob^{j, k}(T \times G)} &\dfn \|(1+\Delta_T)^{j} (1+ \Delta_G)^{k} f \|_{L^2(T \times G)} 
\end{align*}
where $j, k \in \Z_+$, which behave better with respect to the partial Fourier projections while being equivalent, as far as the topology of $\cinfty(T \times G)$ is concerned, to those determined by $\Delta_M$~\eqref{eq:total_sobolev_norm}. To see that, recall that for every $j, k\in \Z_+$ we have for $f$ a smooth function
\begin{align*}
  \|(1+\Delta_M)^{k} f\|_{L^2(T \times G)}^2 &= \sum_{\alpha \in \sigma(\Delta_M)} (1+\alpha)^{2k} \|\mathcal{F}^{M}_\alpha(f)\|^2_{L^2(T \times G)}
\end{align*}
and
\begin{align}
  \|(1 + \Delta_T)^{j} (1+ \Delta_G)^{k} f \|_{L^2(T \times G)}^2 &= \sum_{\alpha\in \sigma(\Delta_M)} \sum_{\mu+\lambda=\alpha}(1+\mu)^{2j}(1+\lambda)^{2k} \|\mathcal{F}^{T}_\mu \mathcal{F}^G_\lambda(f)\|^2_{L^2(T \times G)} \label{eq:mixed_sobolev_open}
\end{align}
where $\mu\in \sigma(\Delta_T)$, $\lambda\in \sigma(\Delta_G)$. Using the inequalities that follow from the positivity of the eigenvalues
\begin{align*}
  1+\alpha \leq (1+\mu)(1+\lambda) \leq (1+\alpha)^2
\end{align*}
and that the terms in the finite sum~\eqref{eq:total_from_partials} are orthogonal in $L^2(T \times G)$ we conclude that
\begin{align} \label{eq:sobolev_comparation}
  \| f \|_{\sob^{k}(T \times G)} \leq \| f \|_{\sob^{k,k}(T \times G)}, &\quad \| f \|_{\sob^{j, k}(T \times G)} \leq \| f \|_{\sob^{j + k}(T \times G)}, \quad \forall j, k \in \Z_+.
\end{align}
We have thus proved:
\begin{Lem} \label{lem:topology_mixed} The topology of $\cinfty (T \times G)$ is generated by the norms $\| \cdot \|_{\sob^{j,k}(T\times G)}$.
\end{Lem}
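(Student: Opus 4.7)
The plan is to show that the two families of seminorms $\{\|\cdot\|_{\sob^k(T \times G)}\}_{k \in \Z_+}$ and $\{\|\cdot\|_{\sob^{j,k}(T \times G)}\}_{j,k \in \Z_+}$ mutually dominate each other, which by general locally convex space theory implies that they generate the same projective limit topology on $\cinfty(T \times G)$. Since the former already generates the $\cinfty$-topology by the discussion preceding Proposition~\ref{prop:charac_smoothness_fourier_proj}, the claim will follow.

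The substantive content has essentially already been established by the computations in the paragraph above the lemma. Indeed, starting from the identities expressing $\|(1+\Delta_M)^k f\|_{L^2}^2$ and $\|(1+\Delta_T)^j(1+\Delta_G)^k f\|_{L^2}^2$ as series of squared Fourier coefficients, I would apply the eigenvalue inequality $1+\alpha \leq (1+\mu)(1+\lambda) \leq (1+\alpha)^2$ (valid for $\alpha=\mu+\lambda$ with $\mu,\lambda \geq 0$) term-by-term, together with the orthogonality of the finite sum~\eqref{eq:total_from_partials} in $L^2(T\times G)$, to recover precisely the comparison~\eqref{eq:sobolev_comparation}:
\begin{align*}
  \|f\|_{\sob^k(T \times G)} \leq \|f\|_{\sob^{k,k}(T \times G)}, \qquad \|f\|_{\sob^{j,k}(T \times G)} \leq \|f\|_{\sob^{j+k}(T \times G)}.
\end{align*}
The first inequality shows that any neighborhood basis element for the $\cinfty$-topology (a sublevel set of some $\|\cdot\|_{\sob^k}$) contains a sublevel set of $\|\cdot\|_{\sob^{k,k}}$, and the second shows the reverse inclusion up to a shift of indices.

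The only technical point worth double-checking is the passage from smooth functions to arbitrary elements of $\cinfty(T \times G)$, but since all the norms involved are continuous on $\cinfty(T \times G)$ and smooth functions are dense in each $\sob^k$, this poses no real obstacle; the comparison~\eqref{eq:sobolev_comparation} extends by continuity (or directly, as each $f \in \cinfty$ admits a convergent Fourier expansion by Corollary~\ref{cor:charac_smoothness_fourier_proj}). I do not anticipate any genuine difficulty here: the lemma is essentially a bookkeeping consequence of the spectral decomposition in Proposition~\ref{prop:relationship_LBs}\eqref{it:relationshipeigens} combined with the elementary inequality between $1+\alpha$ and $(1+\mu)(1+\lambda)$.
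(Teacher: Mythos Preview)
Your proposal is correct and follows exactly the paper's approach: the paper derives the comparison inequalities~\eqref{eq:sobolev_comparation} in the paragraph immediately preceding the lemma and then simply states ``We have thus proved'' before the lemma, so there is no separate proof beyond what you have outlined. Your remark about ``passage from smooth functions to arbitrary elements of $\cinfty(T\times G)$'' is slightly confused (these are the same thing), but this does not affect the argument.
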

The next result characterizes those distributions on $T \times G$ which do not depend on one of the factors.
\begin{Lem} \label{Lem:dist-const-T} Let $f \in \D'(T\times G)$. Then $f$ does not depend on $T$ if and only if $\Delta_T f = 0$.
  \begin{proof} One direction is clear: if $f = 1_T\otimes g$ for some $g \in \D'(G)$, then $\Delta_T f = (\Delta_T 1_T)\otimes g = 0$. For the converse, suppose that $\Delta_T f = 0$ and write $f$ as in~\eqref{eq:fourier_in_bases}: we have the equality
    \begin{align*}
      0 = \Delta_T f = \sum_{\alpha \in \sigma(\Delta_M)}\sum_{\mu + \lambda = \alpha} \sum_{j=1}^{d^T_\mu} \sum_{i=1}^{d_\lambda^G} \mu  \left\langle f, \overline{\psi^\mu_j\otimes \phi^\lambda_i}\right\rangle \psi^\mu_j\otimes \phi^\lambda_i
    \end{align*}
    and the uniqueness of the expansion in Fourier series yields
    \begin{align*}
      \left\langle f, \overline{\psi^\mu_j\otimes \phi^\lambda_i}\right\rangle &= 0, \quad \forall \mu \neq 0, \ \lambda \in \sigma(\Delta_G)
    \end{align*}
    thus implying that the remaining terms in~\eqref{eq:fourier_in_bases} are precisely
    \begin{align*}
      f &=\sum_{\lambda \in \sigma(\Delta_G)} \sum_{i=1}^{d_\lambda^G} \left\langle f, \overline{1_T\otimes \phi^\lambda_i}\right\rangle 1_T\otimes \phi^\lambda_i.
    \end{align*}
    It follows immediately from Corollary~\ref{cor:charac_smoothness_fourier_proj} that
    \begin{align*}
    g &\dfn \sum_{\lambda \in \sigma(\Delta_G)} \sum_{i=1}^{d_\lambda^G} \left\langle f, \overline{1_T\otimes \phi^\lambda_i}\right\rangle \phi^\lambda_i \in \D'(G).
    \end{align*}
    It remains to prove that $f$ equals $w \dfn  1_T\otimes g \in \D'(T\times G)$: we can write
    \begin{align*}
      w &= \sum_{\alpha \in \sigma(\Delta_M)} \sum_{\mu + \lambda = \alpha} \sum_{j=1}^{d^T_\mu} \sum_{i=1}^{d_\lambda^G} \left\langle w, \overline{\psi^\mu_j\otimes \phi^\lambda_i}\right\rangle \psi^\mu_j\otimes \phi^\lambda_i
    \end{align*}
    where
    \begin{align*}
      \left\langle w, \overline{\psi^\mu_j\otimes \phi^\lambda_i}\right\rangle = \left\langle 1_T\otimes g, \overline{\psi^\mu_j\otimes \phi^\lambda_i}\right\rangle = \left\langle 1_T,\overline{\psi^\mu_j}\right\rangle \left\langle g, \overline{\phi^\lambda_i}\right\rangle =
      \begin{cases}
        0, &\text{if $\mu \neq 0$}; \\
        \left\langle g, \overline{\phi^\lambda_i}\right\rangle, &\text{if $\mu = 0$}.
      \end{cases}
    \end{align*}
    We also have, by the definition of $g$,
    \begin{align*}
      \left\langle g, \overline{\phi^\lambda_i}\right\rangle &=  \left\langle f, \overline{1_T\otimes \phi^\lambda_i}\right\rangle, \quad \forall \lambda \in \sigma(\Delta_G), \ i \in \{1,\ldots,d_\lambda^G\}
    \end{align*}
    and therefore we conclude that
    \begin{align*}
    w = \sum_{\alpha \in \sigma(\Delta_M)} \sum_{\mu + \lambda = \alpha} \sum_{i=1}^{d_\lambda^G} \left\langle g, \overline{\phi^\lambda_i}\right\rangle 1_T\otimes \phi^\lambda_i = \sum_{\lambda \in \sigma(\Delta_G)} \sum_{i=1}^{d_\lambda^G}  \left\langle f, \overline{1_T\otimes \phi^\lambda_i}\right\rangle 1_T\otimes \phi^\lambda_i = f.
    \end{align*}
  \end{proof}
\end{Lem}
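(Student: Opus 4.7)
The plan is to handle the two directions separately. The forward implication is essentially immediate: if $f = 1_T \otimes g$ with $g \in \D'(G)$, then lifting $\Delta_T$ to $T \times G$ kills any distribution that is constant in $T$, so $\Delta_T f = (\Delta_T 1_T) \otimes g = 0$. I would state this and move on.

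For the converse, my strategy is to exploit the Fourier expansion provided by Proposition~\ref{prop:relationship_LBs} together with the uniqueness of partial Fourier coefficients. Writing $f$ as in~\eqref{eq:fourier_in_bases}, I would apply $\Delta_T$ termwise (valid since convergence holds in $\D'(T\times G)$, and $\Delta_T$ is continuous there), using that $\Delta_T(\psi^\mu_j \otimes \phi^\lambda_i) = \mu\, \psi^\mu_j \otimes \phi^\lambda_i$. The hypothesis $\Delta_T f = 0$ combined with the uniqueness of the expansion then forces $\langle f, \overline{\psi^\mu_j \otimes \phi^\lambda_i}\rangle = 0$ whenever $\mu \neq 0$, so only terms with $\mu = 0$ (for which $\psi^0_j$ can be taken to be $1_T$, up to normalization) survive.

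It remains to identify the resulting expansion with $1_T \otimes g$ for some $g \in \D'(G)$. I would define
\begin{align*}
  g &\dfn \sum_{\lambda \in \sigma(\Delta_G)} \sum_{i=1}^{d_\lambda^G} \left\langle f, \overline{1_T\otimes \phi^\lambda_i}\right\rangle \phi^\lambda_i
\end{align*}
and invoke Corollary~\ref{cor:charac_smoothness_fourier_proj} (applied on $G$) to ensure that $g$ is a well-defined element of $\D'(G)$: the coefficients of $f$ have at most polynomial growth in $(1 + \lambda)$, which is exactly the criterion for $g$ to be a distribution on $G$. The mild obstacle here is that the characterization in Corollary~\ref{cor:charac_smoothness_fourier_proj} is stated on $T \times G$, but the bound on $f$'s coefficients (polynomial in $1 + \mu + \lambda$, specialized to $\mu = 0$) immediately yields the corresponding polynomial bound in $1 + \lambda$.

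Finally I would verify $f = 1_T \otimes g$ by computing the Fourier coefficients of $w \dfn 1_T \otimes g$ in the basis $\mathcal{S}$: since $\langle 1_T, \overline{\psi^\mu_j}\rangle$ vanishes for $\mu \neq 0$ and equals a normalization factor for $\mu = 0$, the coefficients of $w$ match those of $f$ term by term. Invoking once again the uniqueness of the expansion in the Hilbert basis $\mathcal{S}$ (valid in $\D'(T \times G)$), we conclude $f = w = 1_T \otimes g$, completing the proof. No step is genuinely difficult; the only care needed is bookkeeping the growth bounds to guarantee that $g$ lives in $\D'(G)$ rather than merely being a formal series.
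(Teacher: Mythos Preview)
Your proposal is correct and follows essentially the same approach as the paper's own proof: both argue the forward direction immediately, then expand $f$ in the basis $\mathcal{S}$, apply $\Delta_T$ termwise to kill all $\mu \neq 0$ coefficients by uniqueness, define $g$ via the surviving coefficients, check $g \in \D'(G)$ by growth bounds, and verify $f = 1_T \otimes g$ by matching Fourier coefficients. Your remark about specializing the growth bound from $1+\mu+\lambda$ to $1+\lambda$ when $\mu=0$ is exactly the justification the paper leaves implicit.
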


\subsection{Spectral clusters} \label{sec:spectral-cluesters}

By a \emph{spectral cluster} on $M$ we mean a family $\mathcal{A} \dfn (\mathcal{A}_\lambda)_{\lambda \in \sigma(\Delta_M)}$ such that $\mathcal{A}_\lambda$ is a linear subspace of $E_\lambda^M$ for each $\lambda \in \sigma(\Delta_M)$. Each spectral cluster $\mathcal{A}$ determines an \emph{orthogonal cluster} $\mathcal{A}^\bot \dfn (\mathcal{A}_\lambda^\bot)_{\lambda \in \sigma(\Delta_M)}$ w.r.t.~the $L^2(M)$ inner product on $E_\lambda^M$. We define $\D'_{\mathcal{A}}(M)$ as the space of all $f \in \D'(M)$ such that $\mathcal{F}^M_\lambda(f)\in \mathcal{A}_{\lambda}$ for every $\lambda \in \sigma(\Delta_M)$ and we also define $\cinfty_{\mathcal{A}}(M) \dfn \cinfty(M)\cap \D'_\mathcal{A}(M)$.

When $M$ is a product $T\times G$ and $\mathcal A$ is a spectral cluster on $G$ we use the direct sum decomposition $E_\lambda^G = \mathcal{A}_\lambda \oplus \mathcal{A}_\lambda^\bot$ for each $\lambda \in \sigma(\Delta_G)$ to obtain 
\begin{align*}
\cinfty(T; E_\lambda^G) = \cinfty(T) \otimes E_\lambda^G = \left( \cinfty(T) \otimes \mathcal{A}_\lambda \right) \oplus  \left( \cinfty(T) \otimes \mathcal{A}_\lambda^\bot \right) \dfn \cinfty(T; \mathcal{A}_\lambda) \oplus \cinfty(T; \mathcal{A}_\lambda^\bot)
\end{align*}
in which the summands are orthogonal in $L^2(T \times G)$ thanks to Fubini's Theorem. We then define
\begin{align*}
  \cinfty_{\mathcal{A}}(T \times G) &\dfn \left \{f \in \cinfty(T \times G) \st \mathcal{F}_\lambda^G(f) \in \cinfty(T; \mathcal{A}_\lambda), \ \forall \lambda \in \sigma(\Delta_G) \right \}.
\end{align*}
It is easy to check that $\cinfty_{\mathcal{A}}(T\times G)$ is a closed subspace of $\cinfty(T\times G)$. In fact, if for each $\lambda \in \sigma(\Delta_G)$ we fix $\{\chi_1^\lambda, \ldots, \chi_{c_\lambda}^\lambda\}$ an orthonormal basis for $\mathcal{A}_\lambda$ and $\{\varphi_{c_\lambda+1}^\lambda,\ldots, \varphi_{d_\lambda^G}^\lambda\}$ an orthonormal basis for $\mathcal{A}_\lambda^\bot$ (where $c_\lambda \dfn \dim \mathcal A_\lambda$) then for any $f \in \cinfty(T\times G)$ we may write
\begin{align*}
  \mathcal F_\lambda^G(f) &= \sum_{i=1}^{c_\lambda} \langle f(t,\cdot), \chi_i^\lambda \rangle_{L^2(G)} \otimes \chi_i^\lambda + \sum_{i=c_\lambda+1}^{d_\lambda^G} \langle f(t, \cdot), \varphi_i^\lambda \rangle_{L^2(G)} \otimes \varphi_i^\lambda
\end{align*}
hence $f \in \cinfty_{\mathcal{A}}(T \times G)$ if and only if $\langle f(t, \cdot), \varphi_i^\lambda \rangle_{L^2(G)} = 0$ for every $i \in \{c_\lambda+1,\ldots,d_\lambda^G\}$ and $t \in T$. Suppose that $\{f_\nu \}_{\nu \in \N} \sset \cinfty_{\mathcal{A}}(T \times G)$ is a sequence that converges, in $\cinfty(T\times G)$, to some function $f \in \cinfty(T\times G)$: by the Dominated Convergence Theorem we have, for every $i \in \{c_\lambda+1,\ldots,d_\lambda^G\}$, that
\begin{align*}
  \int_G f(t,x) \overline{\varphi_i^\lambda(x)} \dd V_G(x) = \lim_{\nu \to \infty} \int_G f_\nu(t,x) \overline{\varphi_i^\lambda(x)} \dd V_G(x) = 0, \quad \forall t \in T,
\end{align*}
thus proving our claim. By the same token 
\begin{align*}
  \D'(T; E_\lambda^G) = \left( \D'(T) \otimes \mathcal{A}_\lambda \right) \oplus  \left( \D'(T) \otimes \mathcal{A}_\lambda^\bot \right) \dfn \D'(T; \mathcal{A}_\lambda) \oplus \D'(T; \mathcal{A}_\lambda^\bot)
\end{align*}
hence giving way to the definition
\begin{align*}
  \D'_{\mathcal{A}}(T \times G) &\dfn \{f \in \D'(T \times G) \st \mathcal{F}_\lambda^G(f) \in \D'(T; \mathcal{A}_\lambda), \ \forall \lambda \in \sigma(\Delta_G) \}
\end{align*}
which is a closed subspace of $\D'(T \times G)$. The proof of this fact is similar to the one done above: if $f \in \D'(T\times G)$ we may write
\begin{align*}
  \mathcal F_\lambda^G(f) &= \sum_{i=1}^{c_\lambda} \mathcal F_\lambda^G(f)_i \otimes \chi_i^\lambda + \sum_{i=c_\lambda+1}^{d_\lambda^G} \mathcal F_\lambda^G(f)_i \otimes \varphi_i^\lambda,
\end{align*}
so $f \in \D'_{\mathcal{A}}(T \times G)$ if and only if  $\mathcal F_\lambda^G(f)_i=0$ for every $i \in \{c_\lambda+1,\ldots,d_\lambda^G\}$. Suppose that $\{f_\nu \}_{\nu \in\N} \subset \D'_{\mathcal{A}}(T \times G)$ and that this sequence converges to $f \in \D'(T \times G)$  (where the convergence occurs in this space). For every $\psi \in \cinfty(T)$ and $i \in \{c_\lambda+1,\ldots,d_\lambda^G\}$ we have
\begin{align*}
  \langle  \mathcal F_\lambda^G(f)_i,\psi \rangle = \langle f,\psi \otimes \overline{\varphi_i^\lambda} \rangle = \lim_{\nu \to \infty} \langle f_\nu , \psi \otimes \overline{\varphi_i^\lambda} \rangle = \lim_{\nu \to \infty} \langle \mathcal F_\lambda^G(f_\nu)_i, \psi \rangle = 0.
\end{align*}

By using the natural projection
\begin{align*}
  \pi_{\mathcal{A}, \lambda}: \D'(T; E_\lambda^G) \longrightarrow \D'(T; \mathcal{A}_\lambda)
\end{align*}
we can define the Fourier projection on $\mathcal{A}_{\lambda}$ by 
\begin{align*}
  \mathcal{F}^{G, \mathcal{A}}_{\lambda} \dfn \pi_{\mathcal{A}, \lambda} \circ \mathcal{F}^G_{\lambda} :\D'(T \times G) \longrightarrow \D'(T; \mathcal{A}_\lambda)
\end{align*}
which is clearly written, in terms of the adapted orthonormal bases above, as
\begin{align}
  \mathcal{F}^{G, \mathcal{A}}_{\lambda}(f) &= \sum_{i=1}^{c_\lambda} \mathcal F_\lambda^G(f)_i \otimes \chi_i^\lambda, \quad f \in \D'(T \times G). \label{eq:FourieronAlambda}
\end{align}

\begin{Prop}\label{lem:convwrtA} If $\mathcal A = (\mathcal A_\lambda)_{\lambda \in \sigma(\Delta_G)}$ is a spectral cluster on $G$ and $f \in \D'(T \times G)$ then
  \begin{align}
    \pi_{\mathcal{A}} (f) &\dfn \sum_{\alpha \in \sigma(\Delta_M)} \sum_{\mu + \lambda = \alpha} \sum_{j=1}^{d^T_\mu} \sum_{i=1}^{c_\lambda} \left\langle f, \overline{\psi^\mu_j\otimes \chi^\lambda_i}\right\rangle \psi^\mu_j\otimes \chi^\lambda_i \label{eq:piA}
  \end{align}
  belongs to $\D'(T \times G)$ and satisfies
  \begin{align*}
    \mathcal{F}_\lambda^G(\pi_{\mathcal{A}} (f)) &= \mathcal{F}^{G, \mathcal{A}}_{\lambda}(f), \quad \forall \lambda \in \sigma(\Delta_G).
  \end{align*}
  In particular, $\pi_{\mathcal{A}} (f) \in \D'_{\mathcal{A}}(T \times G)$. Moreover, if $f\in \cinfty(T \times G)$ then $\pi_{\mathcal{A}} (f) \in \cinfty_{\mathcal{A}}(T \times G)$.
\end{Prop}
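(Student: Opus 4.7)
The plan is to verify the three claims in order, using Corollary~\ref{cor:charac_smoothness_fourier_proj} for the convergence/regularity part and a direct computation of partial Fourier projections for the rest. First I would extend the orthonormal family $\{\chi_i^\lambda\}_{i=1}^{c_\lambda}$ of $\mathcal{A}_\lambda$ to the orthonormal basis $\{\chi_i^\lambda\}_{i=1}^{c_\lambda} \cup \{\varphi_i^\lambda\}_{i=c_\lambda+1}^{d_\lambda^G}$ of $E_\lambda^G$ as in the preceding discussion, so that together with a choice of $\{\psi_j^\mu\}$ we obtain a Hilbert basis $\mathcal{S}$ of $L^2(T \times G)$ as in Proposition~\ref{prop:relationship_LBs}. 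The expansion~\eqref{eq:fourier_in_bases} of $f \in \D'(T \times G)$ in this basis is available, and by Corollary~\ref{cor:charac_smoothness_fourier_proj} the coefficients $\langle f, \overline{\psi^\mu_j \otimes \phi^\lambda_i} \rangle$ are of at most polynomial growth in $1 + \mu + \lambda$ (with rapid decay, if $f \in \cinfty(T \times G)$). The series~\eqref{eq:piA} defining $\pi_{\mathcal{A}}(f)$ retains only the coefficients with $\phi^\lambda_i = \chi^\lambda_i$ (i.e.\ $i \leq c_\lambda$), so its coefficients inherit the polynomial bound (resp.\ rapid decay). Applying the converse direction of Corollary~\ref{cor:charac_smoothness_fourier_proj} produces $\pi_{\mathcal{A}}(f) \in \D'(T \times G)$ (resp.\ $\pi_{\mathcal{A}}(f) \in \cinfty(T \times G)$ when $f$ is smooth), with convergence in the appropriate topology.

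Next I would identify the partial Fourier projections of $\pi_{\mathcal{A}}(f)$. Fix $\lambda_0 \in \sigma(\Delta_G)$, $i \in \{1, \ldots, d_{\lambda_0}^G\}$, and $\psi \in \cinfty(T)$. Using the convergence of~\eqref{eq:piA} in $\D'(T \times G)$ and the continuity of the pairing against the test function $\psi \otimes \overline{\phi_i^{\lambda_0}}$, a termwise computation gives
\begin{align*}
  \langle \mathcal{F}_{\lambda_0}^G(\pi_{\mathcal{A}}(f))_i, \psi \rangle &= \sum_{\mu, \lambda, j, k} \left\langle f, \overline{\psi^\mu_j \otimes \chi^\lambda_k} \right\rangle \left(\int_T \psi^\mu_j \psi \, \dd V_T\right) \left\langle \chi^\lambda_k, \phi_i^{\lambda_0} \right\rangle_{L^2(G)}.
\end{align*}
When $i > c_{\lambda_0}$ we have $\phi_i^{\lambda_0} = \varphi_i^{\lambda_0} \in \mathcal{A}_{\lambda_0}^\bot$, which is orthogonal to every $\chi^\lambda_k$ (by orthogonality of distinct eigenspaces together with the definition of $\mathcal{A}_{\lambda_0}^\bot$), so the whole sum vanishes; this already establishes $\pi_{\mathcal{A}}(f) \in \D'_{\mathcal{A}}(T \times G)$. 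When $i \leq c_{\lambda_0}$ we have $\langle \chi^\lambda_k, \chi_i^{\lambda_0} \rangle_{L^2(G)} = \delta_{\lambda\lambda_0}\delta_{ki}$, and the sum collapses to $\sum_{\mu, j} \langle f, \overline{\psi^\mu_j \otimes \chi_i^{\lambda_0}} \rangle \int_T \psi_j^\mu \psi \, \dd V_T$. Performing the analogous termwise expansion of $\langle f, \psi \otimes \overline{\chi_i^{\lambda_0}} \rangle$ against the basis~$\mathcal{S}$ (via~\eqref{eq:fourier_in_bases}) yields exactly the same expression, which is precisely $\langle \mathcal{F}_{\lambda_0}^G(f)_i, \psi \rangle$. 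Thus the partial projections agree for $i \leq c_{\lambda_0}$, and in view of~\eqref{eq:FourieronAlambda} this is the identity $\mathcal{F}_{\lambda_0}^G(\pi_{\mathcal{A}}(f)) = \mathcal{F}_{\lambda_0}^{G,\mathcal{A}}(f)$.

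The only potential subtlety is justifying the termwise evaluation of the series~\eqref{eq:piA} against a fixed test function, but this is immediate from the convergence in $\D'(T \times G)$ proved in the first step together with the continuity of the associated linear functional. For the final assertion, observe that when $f$ is smooth the first step already gives $\pi_{\mathcal{A}}(f) \in \cinfty(T \times G)$, while the identity just proved shows that $\mathcal{F}_\lambda^G(\pi_{\mathcal{A}}(f)) = \mathcal{F}_\lambda^{G,\mathcal{A}}(f) \in \cinfty(T; \mathcal{A}_\lambda)$ for every $\lambda \in \sigma(\Delta_G)$; this is the defining property of $\cinfty_{\mathcal{A}}(T \times G)$, completing the proof.
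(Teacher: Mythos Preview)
Your proof is correct and follows essentially the same approach as the paper: the paper's proof is a brief sketch noting that~\eqref{eq:piA} is the full Fourier expansion~\eqref{eq:fourier_in_bases} (adapted to bases of $\mathcal{A}_\lambda, \mathcal{A}_\lambda^\bot$) with terms removed, so convergence is handled by Corollary~\ref{cor:charac_smoothness_fourier_proj}, and the identity for the partial projections comes from comparing with~\eqref{eq:FourieronAlambda}. You have simply filled in the details of that comparison, including the termwise evaluation of the pairings, which is exactly what the paper leaves to the reader.
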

\begin{proof} It follows essentially by comparing terms in the formal Fourier series~\eqref{eq:piA} and~\eqref{eq:fourier_in_bases}, the latter suitably adapted to $\mathcal{A}, \mathcal{A}^\bot$, while taking into account~\eqref{eq:FourieronAlambda}: one sees at once that~\eqref{eq:piA} is exactly the full ``total'' Fourier series of $f$ with many terms removed, hence questions of convergence are easily decided by means of Corollary~\ref{cor:charac_smoothness_fourier_proj}.
\end{proof}

Therefore, we have a well-defined linear map $\pi_{\mathcal{A}}: \D'(T \times G) \rarr \D'_{\mathcal{A}}(T \times G)$ 
and it easily follows that
\begin{align*}
  f &= \pi_{\mathcal{A}} (f) + \pi_{\mathcal{A}^\bot} (f), \quad \forall f \in \D'(T \times G)
\end{align*}
since $\mathcal{F}^{G}_{\lambda}(f) = \mathcal{F}^{G, \mathcal{A}}_{\lambda}(f) + \mathcal{F}^{G, \mathcal{A}^\bot}_{\lambda}(f)$ for every $\lambda \in \sigma(\Delta_G)$. Notice that $f \in \D'_{\mathcal{A}}(T \times G)$ if and only if $\pi_{\mathcal{A}} (f) = f$, or, equivalently, $\pi_{\mathcal{A}^\bot} (f) = 0$.
\subsection{Invariant clusters on $T\times G$} \label{sec:invariant_clusters}

In this brief section we discuss a notion of a spectral cluster $\mathcal A \dfn (\mathcal A_\lambda)_{\lambda \in \sigma(\Delta_G)}$ on $G$ being invariant under the action of a LPDO $P$ on $M=T\times G$, and how such a property can be used to study issues of regularity of $P$.

We say that $\mathcal{A}$ is \emph{invariant} under $P$ if $P\big(\D'_{\mathcal{A}}(T \times G)\big)\subset \D'_{\mathcal{A}}(T \times G)$ and $P\big(\D'_{\mathcal{A}^\bot}(T \times G)\big)\subset \D'_{\mathcal{A}^\bot}(T \times G)$. This is equivalent to say that $P \circ \pi_{\mathcal{A}}= \pi_{\mathcal{A}} \circ P$ and $P \circ \pi_{\mathcal{A}^\bot}= \pi_{\mathcal{A}^\bot} \circ P$. In fact, suppose first that $\mathcal{A}$ is invariant under $P$ and let us prove the first identity: if $u \in \D'(T\times G)$ then $u = \pi_{\mathcal A}(u)+\pi_{\mathcal A^\bot}(u)$ and by hypothesis we have that
\begin{align*}
  Pu &= \underbrace{P\pi_{\mathcal A}(u)}_{ \in \D'_{\mathcal A}(T\times G)} + \underbrace{P\pi_{\mathcal A^\bot}(u)}_{ \in \D'_{\mathcal A^\bot}(T\times G)}.
\end{align*}
If we apply $\pi_{\mathcal A}$ to both sides of the equality above we obtain
\begin{align*}
  \pi_{\mathcal A}(Pu) = \pi_{\mathcal A} (P\pi_{\mathcal A}(u)) + \pi_{\mathcal A} (P\pi_{\mathcal A^\bot}(u)) = \pi_{\mathcal A} (P\pi_{\mathcal A}(u)) =  P\pi_{\mathcal A}(u),
\end{align*}
so $\pi_{\mathcal A} \circ P = P \circ \pi_{\mathcal A}$ since $u \in \D'(T\times G)$ is arbitrary; the proof that $P \circ \pi_{\mathcal{A}^\bot}= \pi_{\mathcal{A}^\bot} \circ P$ is analogous. Conversely, if $\pi_{\mathcal A} \circ P = P \circ \pi_{\mathcal A}$ and if $u \in \D'_{\mathcal A}(T\times G)$ then
\begin{align*}
  Pu = P \pi_{\mathcal{A}} (u) = \pi_{\mathcal{A}} (Pu) \in \D'_{\mathcal A}(T\times G).
\end{align*}
Analogously, if $\pi_{\mathcal A^\bot} \circ P = P \circ \pi_{\mathcal A^\bot}$ then $P\big(\D'_{\mathcal{A}^\bot}(T \times G)\big)\subset \D'_{\mathcal{A}^\bot}(T \times G)$.

We introduce the following refined notion of regularity of $P$; standard global hypoellipticity of $P$ in $T \times G$ -- $\mathrm{(GH)}$ for short -- corresponds to the full cluster $(E_\lambda^G)_{\lambda \in \sigma(\Delta_G)}$ in the definition below.
\begin{Def} We say that $P$ is~$\mathrm{(GH)}_{\mathcal{A}}$ if
  \begin{align*}
    \forall u \in \D'_{\mathcal{A}}(T \times G), \ Pu \in \cinfty(T \times G) &\Longrightarrow u \in \cinfty_{\mathcal{A}}(T \times G).
  \end{align*}
\end{Def}

\begin{Prop} \label{prop:GH-cluster-inv} Suppose that $\mathcal{A}$ is invariant under $P$. Then $P$ is~$\mathrm{(GH)}$ in $T \times G$ if and only if $P$ is both~$\mathrm{(GH)}_{\mathcal{A}}$ and~$\mathrm{(GH)}_{\mathcal{A}^\bot}$.
  \begin{proof} It is clear that if $P$ is~$\mathrm{(GH)}$ then $P$ is~$\mathrm{(GH)}_{\mathcal{A}}$ and~$\mathrm{(GH)}_{\mathcal{A}^\bot}$ without any additional assumption. For the converse, suppose that $P$ is~$\mathrm{(GH)}_{\mathcal{A}}$ and~$\mathrm{(GH)}_{\mathcal{A}^\bot}$ and let $u \in \D'(T\times G)$ be such that $Pu \in \cinfty(T\times G)$. Since $\mathcal A$ is invariant under $P$ we have that
    \begin{align*}
      P\pi_{\mathcal A}(u) = \pi_{\mathcal A} (P u) \in \cinfty (T\times G) \Longrightarrow \pi_{\mathcal A}(u) \in \cinfty(T\times G).
    \end{align*}
    Analogously, $\pi_{\mathcal A^\bot}(u) \in \cinfty(T\times G)$ and then $u = \pi_{\mathcal A}(u) +\pi_{\mathcal A^\bot} (u) \in \cinfty(T\times G)$. 
  \end{proof}
\end{Prop}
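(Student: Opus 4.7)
The plan is to prove the two implications separately.

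For the forward direction no invariance is needed: given $u \in \D'_{\mathcal{A}}(T \times G)$ with $Pu \in \cinfty(T \times G)$, I would simply invoke $\mathrm{(GH)}$ to conclude $u \in \cinfty(T \times G)$; since $u$ already lies in $\D'_{\mathcal{A}}(T \times G)$, it belongs to the intersection $\cinfty_{\mathcal{A}}(T \times G) = \cinfty(T \times G) \cap \D'_{\mathcal{A}}(T \times G)$. The symmetric reasoning handles $\mathrm{(GH)}_{\mathcal{A}^\bot}$.

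The substantive direction is the converse. Starting from $u \in \D'(T \times G)$ with $Pu \in \cinfty(T \times G)$, I would split $u = \pi_{\mathcal{A}}(u) + \pi_{\mathcal{A}^\bot}(u)$ using the projections from Proposition \ref{lem:convwrtA}. The invariance hypothesis, as spelled out just before the proposition, is equivalent to the commutation relations $P \circ \pi_{\mathcal{A}} = \pi_{\mathcal{A}} \circ P$ and $P \circ \pi_{\mathcal{A}^\bot} = \pi_{\mathcal{A}^\bot} \circ P$. Therefore
\begin{align*}
  P\pi_{\mathcal{A}}(u) &= \pi_{\mathcal{A}}(Pu),
\end{align*}
which is smooth because $\pi_{\mathcal{A}}$ sends $\cinfty(T \times G)$ into $\cinfty_{\mathcal{A}}(T \times G)$, again by Proposition \ref{lem:convwrtA}. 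Hence $\pi_{\mathcal{A}}(u) \in \D'_{\mathcal{A}}(T \times G)$ has smooth $P$-image, and $\mathrm{(GH)}_{\mathcal{A}}$ gives $\pi_{\mathcal{A}}(u) \in \cinfty_{\mathcal{A}}(T \times G)$. The analogous argument with $\mathcal{A}^\bot$ in place of $\mathcal{A}$ shows $\pi_{\mathcal{A}^\bot}(u) \in \cinfty_{\mathcal{A}^\bot}(T \times G)$, and recombining the two pieces delivers $u \in \cinfty(T \times G)$.

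I do not anticipate any serious obstacle: the statement reduces to a bookkeeping consequence of two facts that have already been established in the preceding subsection, namely that $P$ commutes with the projections $\pi_{\mathcal{A}}, \pi_{\mathcal{A}^\bot}$ (the equivalent reformulation of invariance) and that those projections preserve smoothness (Proposition \ref{lem:convwrtA}). The only subtlety worth double-checking is that applying $\pi_{\mathcal{A}}$ to the smooth function $Pu$ really does yield a smooth function and not merely a distribution, but this is precisely what the last assertion of Proposition \ref{lem:convwrtA} guarantees.
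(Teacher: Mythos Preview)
Your proposal is correct and follows essentially the same approach as the paper's own proof: both directions proceed exactly as you describe, with the converse relying on the commutation $P\circ\pi_{\mathcal{A}}=\pi_{\mathcal{A}}\circ P$ (and its $\mathcal{A}^\bot$ counterpart) together with the fact that $\pi_{\mathcal{A}}$ preserves smoothness. Your write-up is in fact slightly more detailed than the paper's, which leaves the appeal to Proposition~\ref{lem:convwrtA} implicit.
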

\section{Global solvability: general results} \label{sec:gs-gen}

We start this section by discussing a few notions of global solvability of operators in a general setting: given a compact, connected, oriented Riemannian manifold $M$, let us consider $P: \D'(M) \rarr \D'(M)$ a continuous linear operator that maps $\cinfty(M)$ into itself; consequently, $P: \cinfty(M) \rarr \cinfty(M)$ is also continuous by the Closed Graph Theorem. Its transpose $\PT$ enjoys the same property by reflexivity.

Let us introduce the notions of solvability that will be considered in this work.
\begin{Def} \label{def:solvability_compact} We say that $P$ is:
  \begin{enumerate}
  \item \label{it:gsSmooth} \emph{globally solvable in $\cinfty$} if for every $f \in \cinfty(M)$ satisfying
    \begin{align}
      \langle v, f \rangle &= 0 \ \text{for every $v \in \D'(M)$ such that $\PT v = 0$}  \label{eq:cinfty_cc}
    \end{align}
    there exists $u \in \cinfty(M)$ such that $Pu = f$;
  \item \label{it:gsDprime} \emph{globally solvable in $\D'$} if for every $f \in \D'(M)$ satisfying 
    \begin{align}
      \langle f, v \rangle &= 0 \ \text{for every $v \in \cinfty(M)$ such that $\PT v = 0$}\label{eq:distr_cc}
    \end{align}
    there exists $u \in \D'(M)$ such that $Pu = f$.
    \item \emph{weakly globally solvable} if for every $f \in \cinfty(M)$ satisfying~\eqref{eq:distr_cc} there exists $u \in \D'(M)$ such that $Pu = f$.
  \end{enumerate}
\end{Def}

It is easy to check the necessity of the compatibility conditions~\eqref{eq:cinfty_cc} and~\eqref{eq:distr_cc} for the existence of solutions.
These definitions are not standard as in many works ``global solvability'' refers to what we call weak global solvability here. The reason behind this terminology is that the latter condition holds automatically if~\eqref{it:gsDprime} is true, yet its relationship with~\eqref{it:gsSmooth} is more delicate since for a smooth $f$ the compatibility conditions~\eqref{eq:cinfty_cc} certainly imply~\eqref{eq:distr_cc} but the converse is not true in general. We explore this in Proposition~\ref{prop:gs_agh} under additional assumptions.

It is common knowledge in this business that when dealing with weak global solvability one may expect to find a necessary \emph{a priori} inequality for it, whose formulation requires us to consider the space
\begin{align}
  E &\dfn \left \{ f\in \cinfty(M) \st \int_{M} f v \ \dd V_M = 0, \ \forall v \in \cinfty(M), \ \PT v = 0 \right\} \label{eq:E}
\end{align}
of all $f \in \cinfty(M)$ satisfying the compatibility condition~\eqref{eq:distr_cc}. 
\begin{Lem} \label{lem:hormander_ineq} If $P$ is weakly globally solvable then there are $k_1, k_2 \in \Z_+$ and $C > 0$ such that
  \begin{align}\label{eq:equivtocontinuity}
    \left|\int_{M} f g \ \dd V_M \right| &\leq C \| f\|_{\sob^{k_1}(M)} \| \transp{P} g\|_{\sob^{k_2}(M)}
  \end{align}
  for every $f\in E$ and $g\in  \cinfty(M)$. 
  \begin{proof} We endow
    \begin{align*}
      F &\dfn \cinfty(M) / \ker \left\{ \transp{P}: \cinfty(M) \rarr \cinfty(M) \right\}
    \end{align*}
    with the metrizable topology defined by the seminorms
    \begin{align*}
      [g] \in F &\longmapsto \|\transp{P} g\|_{\sob^k(M)}, \quad k \in \Z_+,
    \end{align*}
    and $E$ with that inherited from $\cinfty(M)$. We consider the bilinear form $\Theta: E \times F \rarr \C$ defined by
    \begin{align*}
      \Theta(f, [g]) &\dfn \int_{M} f g \ \dd V_M.
    \end{align*}
    Note that if $g_1, g_2$ are representatives of the same class then $\transp{P}(g_1-g_2)=0$ hence it follows that $\Theta(f, [g_1])= \Theta(f, [g_2])$ for $f \in E$.
    
    Let us study the continuity of $\Theta$ in $E \times F$. Since the latter is the product of a Fr\'{e}chet space with a metrizable space, thanks to the Banach-Steinhaus Theorem it is enough to check that $\Theta$ is separately continuous to prove its continuity. It is clear that for every $[g] \in F$ we have that $E \ni f \mapsto \Theta(f, g)$ is continuous. Now for a given $f\in E$ the weak global solvability of $P$ yields $u \in \D'(M)$ such that $P u= f$. This implies that
    \begin{align*}
      \int_{M} f g \ \dd V_M &= \langle u, \transp{P} g \rangle, \quad \forall g \in \cinfty(M)
    \end{align*}
    By continuity of $u: \cinfty(M) \rarr \C$ it follows that there exist $C' > 0$ and $k \in \Z_+$ such that
    \begin{align*}
      |\Theta(f, [g])|\leq C' \|\transp{P} g\|_{\sob^k(M)}, \quad \forall  [g] \in F
    \end{align*}
    and then notice that $\Theta$ being continuous is equivalent to~\eqref{eq:equivtocontinuity}.
  \end{proof}
\end{Lem}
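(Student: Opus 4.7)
The plan is to recast the desired inequality as the joint continuity of a suitable bilinear form, and obtain that joint continuity for free from separate continuity by means of the Banach-Steinhaus theorem. The natural candidate is the map
\begin{align*}
  \Theta(f,[g]) \dfn \int_M f g \ \dd V_M
\end{align*}
defined on $E \times F$, where $E$ is the closed subspace~\eqref{eq:E} of $\cinfty(M)$ (a Fr\'echet space in the induced topology) and $F \dfn \cinfty(M)/\ker\{\transp{P}\colon \cinfty(M)\to\cinfty(M)\}$ is equipped with the metrizable locally convex topology generated by the countable family of seminorms $[g]\mapsto \|\transp{P}g\|_{\sob^k(M)}$ for $k\in\Z_+$. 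The definition of $E$ is exactly what is needed to ensure that $\Theta$ is well defined on classes: two representatives $g_1,g_2$ of $[g]$ differ by an element of $\ker\transp{P}$, hence $f\in E$ pairs to zero against their difference.

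Next I would verify that $\Theta$ is separately continuous. Fixing $[g]\in F$, the map $f\mapsto \int_M fg\ \dd V_M$ is clearly continuous on $\cinfty(M)$, and therefore also on $E$ with the inherited topology. The interesting half is fixing $f\in E$: here the hypothesis of weak global solvability produces some $u\in \D'(M)$ with $Pu=f$, and integration by parts gives
\begin{align*}
  \Theta(f,[g]) = \int_M (Pu)\, g\ \dd V_M = \langle u,\transp{P}g\rangle, \quad \forall g \in \cinfty(M).
\end{align*}
Because $u$ is a distribution on the compact manifold $M$, it is continuous for some Sobolev norm, so there exist $C'>0$ and $k\in\Z_+$ (depending on $f$) such that $|\Theta(f,[g])|\le C'\|\transp{P}g\|_{\sob^k(M)}$, which is exactly continuity of $[g]\mapsto \Theta(f,[g])$ on $F$.

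With separate continuity in hand, I would invoke the bilinear form of the Banach-Steinhaus theorem: since $E$ is a Fr\'echet (hence Baire) space and $F$ is metrizable, every separately continuous bilinear form on $E\times F$ is jointly continuous. Unwinding joint continuity via the generating seminorms of $E$ and $F$ produces constants $k_1,k_2\in\Z_+$ and $C>0$ such that
\begin{align*}
  |\Theta(f,[g])|\le C\,\|f\|_{\sob^{k_1}(M)}\,\|\transp{P}g\|_{\sob^{k_2}(M)}
\end{align*}
uniformly for $f\in E$ and $g\in\cinfty(M)$, which is the sought inequality. The main point to be careful about is the Banach-Steinhaus step: one must make sure that the quotient topology on $F$ is well-behaved (indeed metrizable, as the seminorms used are countable and separate points modulo the kernel), and that the chosen topology on $E$ is complete so that Baire's theorem applies. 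Both are straightforward once the spaces are set up as above.
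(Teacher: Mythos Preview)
Your proposal is correct and follows essentially the same approach as the paper: define the bilinear form $\Theta$ on $E\times F$ with $F=\cinfty(M)/\ker\transp{P}$ equipped with the seminorms $[g]\mapsto\|\transp{P}g\|_{\sob^k(M)}$, check separate continuity using weak global solvability for the $[g]$-variable, and then apply the Banach--Steinhaus theorem on a Fr\'echet-times-metrizable product to obtain joint continuity, which unwinds to~\eqref{eq:equivtocontinuity}. The paper's proof is organized in exactly this way.
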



We will now recall an application of the Homomorphism Theorem for Fr{\'e}chet-Montel spaces. If we define
\begin{align*}
  A_1 \dfn P: \cinfty(M) \longrightarrow \cinfty(M), &\quad A_2 \dfn P: \D'(M) \longrightarrow \D'(M)
\end{align*}
then
\begin{align*}
  \text{$f \in \cinfty(M)$ satisfies~\eqref{eq:cinfty_cc}} &\Longleftrightarrow f \in \ker(\transp{A}_1)^\ort \\
  \text{$f \in \D'(M)$ satisfies~\eqref{eq:distr_cc}} &\Longleftrightarrow f \in \ker(\transp{A}_2)^\ort
\end{align*}
hence
\begin{align*}
  \text{$P$ is globally solvable in $\cinfty$} \Longleftrightarrow \ran(A_1) = \ker(\transp{A}_1)^\ort \Longleftrightarrow \text{$A_1$ has closed range}, \\
  \text{$P$ is globally solvable in $\D'$} \Longleftrightarrow \ran(A_2) = \ker(\transp{A}_2)^\ort \Longleftrightarrow \text{$A_2$ has closed range}.
\end{align*}
Our next proposition is then an immediate consequence of~\cite[p.~18]{kothe_tvs2}, keeping in mind that $\cinfty(M)$ is a Montel space.
\begin{Prop} \label{cor:hom_frechet_glob_solv} The following properties are equivalent.
  \begin{enumerate}
  \item $P: \cinfty(M) \rarr \cinfty(M)$ has closed range.
  \item $\PT: \D'(M) \rarr \D'(M)$ has closed range.
  \item $P$ is globally solvable in $\cinfty$.
  \item $\PT$ is globally solvable in $\D'$.
  \end{enumerate}  
In particular, global solvability in $\cinfty$ and in $\D'$ are equivalent if $P= \PT$.
\end{Prop}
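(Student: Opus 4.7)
The two characterizations established in the paragraphs preceding the statement already give, essentially by the definition of global solvability together with Hahn–Banach, the equivalences $(1) \Iff (3)$ and $(2) \Iff (4)$: indeed, a continuous linear map between locally convex spaces has range equal to the annihilator of the kernel of its transpose precisely when its range is closed, and a short computation identifies the compatibility conditions~\eqref{eq:cinfty_cc} and~\eqref{eq:distr_cc} with the annihilators $\ker(\transp{A}_1)^{\ort}$ and $\ker(\transp{A}_2)^{\ort}$, respectively.

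What remains is the equivalence $(1) \Iff (2)$, and this is where the appeal to~\cite{kothe_tvs2} enters. The Homomorphism Theorem for Fréchet–Montel spaces states that, for a continuous linear map $A: E \rarr F$ between Fréchet–Montel spaces, $A$ has closed range in $F$ if and only if its transpose $\transp{A}: F'_\beta \rarr E'_\beta$ has closed range in $E'_\beta$. In our situation one takes $E = F = \cinfty(M)$ — which is indeed Fréchet–Montel, being a FS space — and $A = P$. By the discussion in Section~\ref{sec:prel}, the strong dual of $\cinfty(M)$ is precisely $\D'(M)$ with the topology we have fixed; moreover, the transpose of $P$ in the functional-analytic sense coincides with the formal transpose $\PT: \D'(M) \rarr \D'(M)$, because the duality between $\cinfty(M)$ and $\D'(M)$ is realized via integration against $\dd V_M$, which is how $\PT$ is defined. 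Applying the cited theorem therefore yields at once $(1) \Iff (2)$.

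Chaining the three equivalences $(3) \Iff (1) \Iff (2) \Iff (4)$ gives the first part of the proposition. The final assertion, that when $P = \PT$ global solvability in $\cinfty$ is equivalent to global solvability in $\D'$, is then the immediate specialization of $(3) \Iff (4)$ to this case. The only subtle point in this plan is the verification that the abstract transpose produced by the Homomorphism Theorem agrees with the $\PT$ we have been working with; but this is routine once one recalls that both $\cinfty(M)$ and $\D'(M)$ are reflexive and carry the topologies discussed in Section~\ref{sec:prel}, so no further work is needed.
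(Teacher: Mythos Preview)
Your proposal is correct and follows exactly the same route as the paper: the equivalences $(1)\Leftrightarrow(3)$ and $(2)\Leftrightarrow(4)$ are those recorded in the paragraph immediately preceding the proposition, and $(1)\Leftrightarrow(2)$ is the Homomorphism Theorem for Fr\'echet--Montel spaces from K\"othe applied to $P:\cinfty(M)\to\cinfty(M)$, whose abstract transpose is $\PT:\D'(M)\to\D'(M)$. If anything, you have spelled out more detail than the paper, which simply cites~\cite[p.~18]{kothe_tvs2} and the Montel property of $\cinfty(M)$.
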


Next we deduce a general sufficient condition for closedness of the range of $P: \cinfty(M) \rarr \cinfty(M)$ inspired by the notion of global hypoellipticity.

\begin{Def}\label{Def:AGH} We say that $P$ is:
  \begin{enumerate}
  \item \emph{globally hypoelliptic} in $M$ -- $\mathrm{(GH)}$ for short -- if
    \begin{align*}
      \forall u \in \D'(M), \ Pu \in \cinfty(M) &\Longrightarrow u \in \cinfty(M). 
    \end{align*}
  \item \emph{almost globally hypoelliptic} in $M$ -- $\mathrm{(AGH)}$ for short -- if
    \begin{align}
      \forall u \in \D'(M), \ Pu \in \cinfty(M) &\Longrightarrow \text{$\exists v \in \cinfty(M)$ such that $Pv = Pu$}. \label{eq:AGH}
    \end{align}
  \end{enumerate}
\end{Def}
It is clear that
\begin{align*}
  \text{$P$ is~$\mathrm{(GH)}$ in $M$} &\Longleftrightarrow \text{$P$ is~$\mathrm{(AGH)}$ in $M$ and $\ker \{ P: \D'(M) \longrightarrow \D'(M) \} \sset \cinfty(M)$}
\end{align*}
thus justifying the nomenclature. The reader should not get too excited about this analogy, however, as in extreme cases an operator may well be~$\mathrm{(AGH)}$ for trivial reasons while failing to be globally hypoelliptic by far (take the zero operator, for instance). Yet, the former property alone is enough to ensure global solvability.
\begin{Thm} \label{thm:general_agh_closedrange} If $P$ is~$\mathrm{(AGH)}$ in $M$ then $P: \cinfty(M) \rarr \cinfty(M)$ has closed range. In particular, if $P$ is~$\mathrm{(GH)}$ in $M$ then it is globally solvable in $\cinfty$.
\end{Thm}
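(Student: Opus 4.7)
The plan is to invoke Proposition~\ref{cor:hom_frechet_glob_solv}, which reduces closedness of the range of $P \colon \cinfty(M) \to \cinfty(M)$ to global solvability of $P$ in $\cinfty(M)$. So I fix $f \in \cinfty(M)$ satisfying the compatibility condition~\eqref{eq:cinfty_cc} and set out to produce $u \in \cinfty(M)$ with $Pu = f$.

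The critical observation is that, under the hypothesis $\mathrm{(AGH)}$, it is enough to produce a \emph{distributional} solution $u \in \D'(M)$ of $Pu = f$: applying~\eqref{eq:AGH} to such a $u$ yields $v \in \cinfty(M)$ with $Pv = Pu = f$, concluding the argument. Thus the whole theorem reduces to weak global solvability restricted to the smaller class of right-hand sides singled out by~\eqref{eq:cinfty_cc}.

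To obtain the distributional solution I use a Hahn--Banach extension. Condition~\eqref{eq:cinfty_cc} says precisely that $\langle v, f\rangle = 0$ for every $v \in \ker \PT|_{\D'(M)}$, and in particular for every $v \in \ker \PT|_{\cinfty(M)}$, so the prescription
\begin{align*}
  L_0(\PT \phi) &\dfn \langle f, \phi \rangle, \quad \phi \in \cinfty(M),
\end{align*}
defines unambiguously a linear functional on the subspace $\PT(\cinfty(M)) \sset \cinfty(M)$. If $L_0$ is continuous for the topology inherited from $\cinfty(M)$, then a Hahn--Banach extension produces $u \in \D'(M) = \cinfty(M)'$ with $\langle u, \PT\phi \rangle = L_0(\PT\phi) = \langle f, \phi\rangle = \langle Pu, \phi\rangle$ for all $\phi \in \cinfty(M)$, hence $Pu = f$ in $\D'(M)$, and we are done by the reduction above.

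The main obstacle, and the step where $\mathrm{(AGH)}$ must be put to essential use, is establishing the continuity of $L_0$; this amounts to an a priori estimate of the form $|\langle f, \phi \rangle| \leq C \|\PT \phi\|_{\sob^k(M)}$ for some $C > 0$ and $k \in \Z_+$. My intended route is to combine the bipolar/Hahn--Banach characterization $\overline{P(\cinfty(M))} = (\ker \PT|_{\D'(M)})^\ort$ (which~\eqref{eq:cinfty_cc} places $f$ within) with the Banach--Alaoglu and Montel properties of the DFS space $\D'(M)$: approximating $f$ by a sequence $Pu_n \to f$ in $\cinfty(M)$, one tries to adjust each $u_n$ by a suitable element of $\ker P|_{\D'(M)}$---an operation whose flexibility is exactly what $\mathrm{(AGH)}$ articulates---in order to extract a subsequence bounded in some $\sob^{-s}(M)$, whose weak-$\ast$ limit is the sought $u$.
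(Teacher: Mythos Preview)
Your reduction steps are sound: Proposition~\ref{cor:hom_frechet_glob_solv} does convert closedness of the range into global solvability in $\cinfty$, and the observation that under $\mathrm{(AGH)}$ it suffices to find a \emph{distributional} solution of $Pu=f$ is correct and useful. The Hahn--Banach setup for $L_0$ is also standard, and you correctly identify that everything hinges on an a~priori estimate of the type $|\langle f,\phi\rangle|\leq C\|\PT\phi\|_{\sob^k(M)}$.

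The gap is in the last paragraph, where that estimate is supposed to come from. Your ``intended route''---approximate $f$ by $Pu_n$, adjust each $u_n$ by an element of $\ker P|_{\D'(M)}$, extract a subsequence bounded in some $\sob^{-s}$---does not work as described, and the sentence ``an operation whose flexibility is exactly what $\mathrm{(AGH)}$ articulates'' mischaracterizes the hypothesis. Applied to a \emph{smooth} $u_n$, condition $\mathrm{(AGH)}$ is vacuous (take $v=u_n$), so it cannot supply any quantitative information about how to adjust the $u_n$ or why the adjusted sequence should sit in a fixed Sobolev space. What is really needed is an inequality of the form $q_j([u])\leq C\|Pu\|_{\sob^k(M)}$ on $\cinfty(M)/K$, and nothing you have written produces it.

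The paper obtains precisely this inequality by a different mechanism. One embeds $\cinfty(M)/K$ into $K_2^\bot\times\cinfty(M)$ via $[u]\mapsto(\pi(u),Pu)$, where $K_2=\ker P|_{\D'(M)}\cap L^2(M)$ and $\pi$ is the $L^2$-orthogonal projection onto $K_2^\bot$. The hypothesis $\mathrm{(AGH)}$ is used exactly once, to show that this graph is \emph{closed}: if $\pi(u_\nu)\to u$ in $L^2$ and $Pu_\nu\to f$ in $\cinfty$, then $Pu=f\in\cinfty$ with $u\in L^2$, and $\mathrm{(AGH)}$ furnishes a smooth $v$ with $Pv=f$ and $\pi(v)=u$. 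The Open Mapping Theorem then yields $q_j([u])\leq C(\|\pi(u)\|_{L^2}+\|Pu\|_{\sob^k})$, and a standard compactness/contradiction argument (Rellich) absorbs the $\|\pi(u)\|_{L^2}$ term. Your sketch does not isolate this closed-graph step, which is the one place where $\mathrm{(AGH)}$ does real work; without it the argument is incomplete.
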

\begin{proof} We denote
  \begin{align*}
    K \dfn \ker \{ P: \cinfty(M) \longrightarrow \cinfty(M) \}, &\quad K^\sharp \dfn \ker \{ P: \D'(M) \longrightarrow \D'(M) \}
  \end{align*}
  which are closed subspaces of their respective ambient spaces by continuity. In particular, $K_2 \dfn K^\sharp \cap L^2(M)$ is closed in $L^2(M)$ and therefore we may write $L^2(M) = K_2 \oplus K_2^\bot$ where $K_2^\bot$ denotes the $L^2$-orthogonal space to $K_2$.  Since $K$ is closed in $\cinfty(M)$ we have that $\cinfty(M)/K$ is a Fr{\'e}chet space: its topology is given by the seminorms $\{q_j\}_{j \in \Z_+}$ defined by:
  \begin{align*}
    q_j([u]) &\dfn \inf \left\{ \| v \|_{\sob^j(M)} \st v \in \cinfty(M), \ u - v \in K \right\}, \quad u \in \cinfty(M), \ j \in \Z_+.
  \end{align*}
\item The map
  \begin{align}
    [u] \in \cinfty(M)/K &\longmapsto Pu \in \cinfty(M) \label{eq:Preduced}
  \end{align}
  is then well-defined, continuous and injective, and we have a continuous injection $\cinfty(M)/K \hookrightarrow K_2^\bot$ given by $[u] \mapsto \pi(u)$ where $\pi: L^2(M) \rarr K_2^\bot$ denotes the orthogonal projection. Indeed, given $u, v \in \cinfty(M)$ we have that
  \begin{align*}
    [u] = [v] \Longleftrightarrow u - v \in K \sset K_2 \Longleftrightarrow \pi(u -v) = 0 \Longleftrightarrow \pi(u) = \pi(v).
  \end{align*}
  In particular, $\pi(u) = 0$ implies $[u] = [0]$, hence the injectivity. As for continuity, notice that for every $v \in \cinfty(M)$ such that $u - v \in K$ we have that
  \begin{align*}
    \| \pi(u)\|_{L^2(M)} = \| \pi(v)\|_{L^2(M)} \leq \| v \|_{L^2(M)} = \| v \|_{\sob^0(M)}
  \end{align*}
  so taking the infimum on the right-hand side yields $\| \pi(u)\|_{L^2(M)} \leq q_0([u])$ for every $u \in \cinfty(M)$.
  
  Now consider the continuous linear map between Fr{\'e}chet spaces
  \begin{align*}
    \TR{\gamma}{[u]}{\cinfty(M)/K}{(\pi(u), Pu)}{K_2^\bot \times \cinfty(M)}
  \end{align*}
  which is clearly injective. We claim that its range (which is simply the graph of~\eqref{eq:Preduced}, regarded as a subset of $K_2^\bot \times \cinfty(M)$) is closed. Indeed, let $\{u_\nu\}_{\nu \in \N} \sset \cinfty(M)$ and $(u, f) \in K_2^\bot \times \cinfty(M)$ be such that
  \begin{align*}
    \text{$\pi(u_\nu) \to u$ in $L^2(M)$}  \quad \text{and} \quad \text{ $Pu_\nu \to f$ in $\cinfty(M)$}.
  \end{align*}
  Since $u_\nu - \pi(u_\nu) \in K_2$ we have that $P u_\nu = P(\pi(u_\nu)) \to Pu$ in $\D'(M)$; yet, $Pu_\nu \to f$ in $\D'(M)$ hence $Pu = f \in \cinfty(M)$. By~\eqref{eq:AGH} there exists $v \in \cinfty(M)$ such that $Pv = Pu$, and thus $v - u \in K_2$ as both $u$ and $v$ belong to $L^2(M)$. Since $v = (v - u) + u$ we have by uniqueness in the direct sum decomposition that $\pi(v) = u$, that is, $u$ is precisely the image of $[v]$ under the injection $\cinfty(M)/K \hookrightarrow K_2^\bot$: in that sense, $(u,f)$ belongs to $Y \dfn \ran(\gamma)$.

  Therefore $Y$ is itself a Fr{\'e}chet space, and the inverse $\gamma^{-1}: Y \rarr \cinfty(M)/K$ is continuous by the Open Mapping Theorem. The latter property can then be quantified as follows: for each $j \in \Z_+$ there exist $C > 0$ and $k \in \Z_+$ such that
  \begin{align}
    q_j([u]) &\leq C \left( \| \pi(u) \|_{L^2(M)} + \| Pu \|_{\sob^k(M)} \right), \quad \forall u \in \cinfty(M). \label{eq:closed_graph_ineq}
  \end{align}
  We claim that there exists a constant $C' > 0$ such that
  \begin{align}
    q_j([u]) &\leq C' \| Pu \|_{\sob^k(M)}, \quad \forall u \in \cinfty(M).  \label{eq:closed_graph_ineq_plus}
  \end{align}
  There is no loss of generality in assuming that $j \geq 1$ since $q_0([u]) \leq q_1([u])$ for every $u \in \cinfty(M)$. So if there were no such constant $C'$ then for each $\nu \in \N$ there would be $u_\nu \in \cinfty(M)$ such that
  \begin{align*}
    q_j([u_\nu]) &> \nu \| Pu_\nu \|_{\sob^k(M)}
  \end{align*}
  and we may assume that $q_j([u_\nu]) = 1$. It follows that $Pu_\nu \to 0$ in $\sob^k(M)$, and also that for each $\nu \in \N$ there exists $v_\nu \in \cinfty(M)$ such that $u_\nu - v_\nu \in K$ and $\| v_\nu \|_{\sob^j(M)} \leq 2$ (by definition of $q_j$). Thus the sequence $\{ v_\nu \}_{\nu \in \N}$ is bounded in $\sob^j(M)$, and since the inclusion map $\sob^j(M) \hookrightarrow L^2(M)$ is compact we conclude that this sequence has a subsequence $\{ v_{\nu'} \}_{\nu' \in \N}$ which converges there, say, to some $v \in L^2(M)$. In particular, $Pu_{\nu'} = Pv_{\nu'} \to Pv$ in $\D'(M)$; but also $Pu_{\nu'} \to 0$ in $\D'(M)$, hence $v \in K_2$. By continuity of $\pi$ we have that $\pi(v_{\nu'}) \to \pi(v) = 0$ in $L^2(M)$ i.e.~$\| \pi(v_{\nu'}) \|_{L^2(M)} \to 0$, and since $u_\nu - v_\nu \in K$ we have that $\pi(u_\nu) = \pi(v_\nu)$ so $\| \pi(u_{\nu'}) \|_{L^2(M)} \to 0$ too. Plugging everything back into~\eqref{eq:closed_graph_ineq} yields
  \begin{align*}
    1 \leq C \left( \| \pi(u_{\nu'}) \|_{L^2(M)} + \| Pu_{\nu'} \|_{\sob^k(M)} \right) \to 0 \quad \text{as $\nu' \to \infty$}
  \end{align*}
  leading to a contradiction.

  We have so far proved that for each $j \in \Z_+$ there exist $C' > 0$ and $k \in \Z_+$ such that~\eqref{eq:closed_graph_ineq_plus} holds: this ultimately implies that the range of $P: \cinfty(M) \rarr \cinfty(M)$ is closed. Notice that the latter is exactly the range of~\eqref{eq:Preduced}, whose closedness in $\cinfty(M)$ we proceed to prove. Let $\{[u_\nu] \}_{\nu \in \N}$ be a sequence in $\cinfty(M)/K$ such that $\{ Pu_\nu \}_{\nu \in \N}$ converges to some $f$ in $\cinfty(M)$. Given $j \in \Z_+$ let $C' > 0$ and $k \in \Z_+$ be such that~\eqref{eq:closed_graph_ineq_plus} holds: since $Pu_\nu \to f$ in $\sob^k(M)$ it follows that $\{ Pu_\nu \}_{\nu \in \N}$ is a Cauchy sequence in $\sob^k(M)$, hence by~\eqref{eq:closed_graph_ineq_plus} $\{[u_\nu] \}_{\nu \in \N}$ is a Cauchy sequence w.r.t.~$q_j$, and since $j$ is arbitrary and $\{q_j\}_{j \in \Z_+}$ is a basis of seminorms for the topology of $\cinfty(M)/K$ we have that $\{[u_\nu] \}_{\nu \in \N}$ is a Cauchy sequence there. By completeness there exists $[u] \in \cinfty(M)/K$ such that $[u_\nu] \to [u]$ in $\cinfty(M)/K$, so by continuity of~\eqref{eq:Preduced} we have that $P u_\nu \to Pu$ in $\cinfty(M)$, hence $Pu = f$. Our proof is complete. 
\end{proof}

The converse of Theorem~\ref{thm:general_agh_closedrange} is false in general, as the next simple example shows. Yet, we have a useful partial converse, assuming an additional hypothesis on $\PT$ -- which will be satisfied by our classes of operators in the next sections -- that we prove in the sequence.

\begin{Exa} \label{exa:noconverseAGH} On $M \dfn S^1$ consider the smooth function defined by $a(x) \dfn 1 - e^{ix}$, which we regard as a zero order differential operator $P$. Since $a$ has a single zero, which is of order $1$, it is easily seen that the only distributions in $\ker \transp{P} = \ker P$ are multiples of the Dirac mass concentrated at $x = 0$, hence a smooth $f$ satisfies~\eqref{eq:cinfty_cc} if and only if $f(0) = 0$; on the other hand,~\eqref{eq:distr_cc} imposes no constraints.

  For $f \in \cinfty(S^1)$ vanishing at $x = 0$ by elementary calculus we may find a smooth $u$ such that $au = f$ i.e.~$P$ is globally solvable in $\cinfty$. Nevertheless, no smooth $v$ can satisfy $av = 1$ (as the right-hand side will always vanish at $0$) but it is classical that one can find a distribution $v$ with such a property, hence $P$ is not~$\mathrm{(AGH)}$ in $S^1$. Similar arguments ensure that the vector field $\vv{X} \dfn a(x) D_x$ on $S^1$ is globally solvable in $\cinfty$, but not~$\mathrm{(AGH)}$ there. 
\end{Exa}

\begin{Prop} \label{prop:gs_agh} If the kernel of $\transp{P}: \cinfty(M) \rarr \cinfty(M)$ is dense in the kernel of $\transp{P}: \D'(M) \rarr \D'(M)$ then for $f \in \cinfty(M)$ condition~\eqref{eq:distr_cc} implies condition~\eqref{eq:cinfty_cc}. If, moreover, $P: \cinfty(M) \rarr \cinfty(M)$ has closed range then $P$ is~$\mathrm{(AGH)}$ in $M$ and weakly globally solvable.
  \begin{proof} Take $f \in \cinfty(M)$ satisfying~\eqref{eq:distr_cc}. By hypothesis, any $v \in \D'(M)$ annihilated by $\transp{P}$ may be approximated in $\D'(M)$ by a net $\{ v_\alpha \}_{\alpha \in A}$ in $\ker \{\transp{P}: \cinfty(M) \rarr \cinfty(M)\}$, and we thus have
    \begin{align*}
      \langle v, f \rangle = \lim_\alpha \langle v_\alpha, f \rangle = \lim_\alpha \langle f, v_\alpha \rangle = 0
    \end{align*}
    hence~\eqref{eq:cinfty_cc} holds.
    
    Moreover, let $u \in \D'(M)$ be such that $f \dfn Pu \in \cinfty(M)$. Then $f$ satisfies~\eqref{eq:distr_cc}, hence also~\eqref{eq:cinfty_cc} by the previous arguments. If we further assume that $\ran \{P: \cinfty(M) \rarr \cinfty(M)\}$ is closed then $f$ belongs there, so there exists $v \in \cinfty(M)$ such that $Pv = f$, and therefore $P$~$\mathrm{(AGH)}$ in $M$. The conclusion that $P$ is also weakly globally solvable under such assumptions follows from a similar argument.
  \end{proof}
\end{Prop}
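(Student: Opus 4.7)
The plan is in two steps, mirroring the two assertions of the proposition. First, for the implication \eqref{eq:distr_cc} $\Rightarrow$ \eqref{eq:cinfty_cc} for smooth $f$, I would use continuity of the pairing against a test function. Given $f \in \cinfty(M)$ satisfying \eqref{eq:distr_cc} and any $v \in \D'(M)$ with $\transp{P} v = 0$, the density hypothesis produces a net $\{v_\alpha\}_{\alpha \in A}$ in $\ker\{\transp{P}: \cinfty(M) \to \cinfty(M)\}$ converging to $v$ in $\D'(M)$. Since $f$ is a test function, the map $w \mapsto \langle w, f \rangle$ is continuous on $\D'(M)$ by definition of the strong dual topology, so $\langle v, f \rangle = \lim_\alpha \langle v_\alpha, f \rangle = \lim_\alpha \langle f, v_\alpha \rangle = 0$, where the last pairings are $L^2$-integrals that vanish by \eqref{eq:distr_cc}.

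Second, under the additional closed range assumption, to prove $\mathrm{(AGH)}$ take $u \in \D'(M)$ with $f \dfn Pu \in \cinfty(M)$. For every $v \in \cinfty(M)$ with $\transp{P}v = 0$ one has $\langle f, v \rangle = \langle Pu, v \rangle = \langle u, \transp{P} v \rangle = 0$, hence $f$ satisfies \eqref{eq:distr_cc}, and therefore also \eqref{eq:cinfty_cc} by the first step. By Proposition~\ref{cor:hom_frechet_glob_solv}, closedness of the range of $P: \cinfty(M) \to \cinfty(M)$ is equivalent to the equality $\ran(P|_{\cinfty(M)}) = \ker(\transp{P}|_{\D'(M)})^\ort$, which is precisely the set of smooth functions satisfying \eqref{eq:cinfty_cc}; thus there exists $v \in \cinfty(M)$ with $Pv = f = Pu$, establishing \eqref{eq:AGH}. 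Weak global solvability is then immediate: for $f \in \cinfty(M)$ satisfying \eqref{eq:distr_cc}, the first step yields \eqref{eq:cinfty_cc}, and closed range furnishes $v \in \cinfty(M) \sset \D'(M)$ with $Pv = f$.

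The only mildly delicate point is the density-based passage to the limit: since $\D'(M)$ need not be metrizable, one should work with nets rather than sequences, but this is harmless because continuity of $\langle \cdot, f \rangle$ against the fixed smooth $f$ is a tautological feature of the strong dual topology on $\D'(M)$, and one does not need any quantitative control. Once the first step is in hand, the rest is a clean invocation of the equivalence between closed range and global solvability in $\cinfty$ already recorded in Proposition~\ref{cor:hom_frechet_glob_solv}.
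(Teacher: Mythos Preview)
Your proposal is correct and follows essentially the same approach as the paper's own proof: both use the density hypothesis and continuity of the pairing against the fixed smooth $f$ for the first assertion, then feed $f = Pu$ through this implication and invoke the closed-range characterization to produce a smooth preimage. Your write-up is slightly more explicit in verifying that $f = Pu$ satisfies~\eqref{eq:distr_cc} and in citing Proposition~\ref{cor:hom_frechet_glob_solv} for the equality $\ran(P|_{\cinfty(M)}) = \ker(\transp{P}|_{\D'(M)})^\ort$, whereas the paper leaves that step implicit, but the argument is the same.
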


We can then restate our main conclusion as follows.
\begin{Cor} \label{cor:main_abstract} If the kernel of $\transp{P}: \cinfty(M) \rarr \cinfty(M)$ is dense in the kernel of $\transp{P}: \D'(M) \rarr \D'(M)$ then the following are equivalent:
  \begin{enumerate}
  \item $P$ is globally solvable in $\cinfty(M)$.
  \item $\transp{P}$ is globally solvable in $\D'(M)$.
  \item $P$ is~$\mathrm{(AGH)}$ in $M$.
  \end{enumerate}
\end{Cor}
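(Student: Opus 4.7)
The plan is to assemble the corollary directly from the three preceding general results: Proposition~\ref{cor:hom_frechet_glob_solv}, Theorem~\ref{thm:general_agh_closedrange}, and Proposition~\ref{prop:gs_agh}. No new argument is needed; the only thing to keep track of is which implications are free and which one invokes the density hypothesis on $\ker \transp{P}$. Accordingly, I would prove the equivalence as a cycle (1) $\Rightarrow$ (2) $\Rightarrow$ (1) $\Rightarrow$ (3) $\Rightarrow$ (1), or equivalently as three separate implications.

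First, for (1) $\Leftrightarrow$ (2), I would just quote Proposition~\ref{cor:hom_frechet_glob_solv}: global solvability in $\cinfty(M)$ of $P$ is equivalent to $P: \cinfty(M) \rarr \cinfty(M)$ having closed range, which in turn is equivalent to $\transp{P}: \D'(M) \rarr \D'(M)$ having closed range, i.e.\ to global solvability of $\transp{P}$ in $\D'(M)$. This step does not need the density hypothesis. Next, for (3) $\Rightarrow$ (1), I would invoke Theorem~\ref{thm:general_agh_closedrange}: if $P$ is~$\mathrm{(AGH)}$ in $M$ then $P: \cinfty(M) \rarr \cinfty(M)$ has closed range, and so, again by Proposition~\ref{cor:hom_frechet_glob_solv}, $P$ is globally solvable in $\cinfty$. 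Once more, no density assumption is required.

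The only implication where the hypothesis genuinely enters is (1) $\Rightarrow$ (3). Starting from global solvability of $P$ in $\cinfty(M)$, Proposition~\ref{cor:hom_frechet_glob_solv} gives that $P: \cinfty(M) \rarr \cinfty(M)$ has closed range. Then the second half of Proposition~\ref{prop:gs_agh}, applied precisely under the density assumption on $\ker \transp{P}$, yields that $P$ is~$\mathrm{(AGH)}$ in $M$. This closes the loop.

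There is no real obstacle in this proof; the work has been done in the preceding general results, and the density hypothesis has been crafted to be exactly what is needed to convert closed range of $P$ into the~$\mathrm{(AGH)}$ property (cf.\ Example~\ref{exa:noconverseAGH}, which shows that this hypothesis cannot simply be dropped). I would therefore present the proof as a short paragraph listing these three citations, perhaps organizing it as (1) $\Leftrightarrow$ (2) by Proposition~\ref{cor:hom_frechet_glob_solv}, (3) $\Rightarrow$ (1) by Theorem~\ref{thm:general_agh_closedrange}, and (1) $\Rightarrow$ (3) by Proposition~\ref{prop:gs_agh}.
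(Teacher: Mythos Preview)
Your proposal is correct and matches the paper's approach: the corollary is stated immediately after Proposition~\ref{prop:gs_agh} as a direct restatement of the preceding conclusions, without a separate proof. The equivalences are obtained exactly as you describe, by combining Proposition~\ref{cor:hom_frechet_glob_solv}, Theorem~\ref{thm:general_agh_closedrange}, and Proposition~\ref{prop:gs_agh}, with the density hypothesis entering only in the implication $(1) \Rightarrow (3)$.
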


\subsection{Almost globally hypoelliptic systems of vector fields} \label{sec:systems_AGH}

Given a family $\mathcal{L}$ of smooth, real vector fields on $M$ we define
\begin{align*}
  \ker \mathcal{L} &\dfn \{ u \in \D'(M) \st \vv{L}u = 0, \ \forall \vv{L} \in \mathcal{L} \}.
\end{align*}
\begin{Def} \label{def:agh_systems} We say that $\mathcal{L}$ is \emph{almost globally hypoelliptic} in $M$ -- $\mathrm{(AGH)}$ for short -- if for every $u \in \D'(M)$ we have
  \begin{align*}
    \vv{L} u \in \cinfty(M), \ \forall \vv{L} \in \mathcal{L} &\Longrightarrow \text{$\exists v \in \cinfty(M)$ such that $u - v \in \ker \mathcal{L}$}.
  \end{align*}
\end{Def}

\begin{Lem} \label{lem:LspanlieAGH} The following are equivalent:
  \begin{enumerate}
  \item \label{it:agh1} $\mathcal{L}$ is~$\mathrm{(AGH)}$ in $M$.
  \item \label{it:agh2} $\Span_\R \mathcal{L}$ is~$\mathrm{(AGH)}$ in $M$.
  \item \label{it:agh3} $\lie \mathcal{L}$, the Lie algebra generated by $\mathcal{L}$, is~$\mathrm{(AGH)}$ in $M$.
  \end{enumerate}
  \begin{proof} Given $\mathcal{L} \sset \mathcal{L}'$ two families of vector fields on $M$ such that $\ker \mathcal{L} = \ker \mathcal{L}'$ it is immediate to check that if $\mathcal{L}$ is~$\mathrm{(AGH)}$ in $M$ then so is $\mathcal{L}'$. Now, since $\mathcal{L} \sset \Span_\R \mathcal{L} \sset \lie \mathcal{L}$ we have that $\ker \lie \mathcal{L} \sset \ker \Span_\R \mathcal{L} \sset \ker \mathcal{L}$, but the latter inclusions are equalities since
    \begin{align*}
      \lie \mathcal{L} &= \Span_\R \bigcup_{\nu \in \N} \{[\vv{X}_1,[ \cdots [\vv{X}_{\nu - 1}, \vv{X}_\nu] \cdots ]] \st \vv{X}_j \in \mathcal{L}, \ 1 \leq j \leq \nu \}
    \end{align*}
    hence a distribution annihilated by every vector in $\mathcal{L}$ is clearly annihilated by those in $\lie \mathcal{L}$. We conclude that~$\eqref{it:agh1} \Rightarrow \eqref{it:agh2} \Rightarrow \eqref{it:agh3}$.
    
    If $u \in \D'(M)$ is such that $\vv{L} u \in \cinfty(M)$ for every $\vv{L} \in \mathcal{L}$ then also $\vv{L}' u \in \cinfty(M)$ for every $\vv{L}' \in \lie \mathcal{L}$, again due to the characterization of $\lie \mathcal{L}$ above; assuming~\eqref{it:agh3} we conclude that there exists $v \in \cinfty(M)$ such that $u - v \in \ker \lie \mathcal{L} = \ker \mathcal{L}$, so~\eqref{it:agh1} holds too.
  \end{proof}
\end{Lem}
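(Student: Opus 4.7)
The plan is to observe that both the hypothesis and the conclusion appearing in Definition~\ref{def:agh_systems} are insensitive to replacing $\mathcal{L}$ by $\Span_\R \mathcal{L}$ or by $\lie \mathcal{L}$, so that the three assertions~\eqref{it:agh1},~\eqref{it:agh2},~\eqref{it:agh3} actually express the same implication about distributions on $M$.

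First I would establish the equality of kernels $\ker \mathcal{L} = \ker \Span_\R \mathcal{L} = \ker \lie \mathcal{L}$. The inclusions from right to left follow from $\mathcal{L} \sset \Span_\R \mathcal{L} \sset \lie \mathcal{L}$. The opposite directions are consequences of linearity of differentiation (for the span) and of the standard description of $\lie \mathcal{L}$ as the real span of all iterated commutators of elements of $\mathcal{L}$: a short induction on bracket depth shows that a distribution annihilated by every $\vv{L} \in \mathcal{L}$ is also annihilated by every such iterated bracket, since $[\vv{X}_1, \vv{X}_2] u = \vv{X}_1(\vv{X}_2 u) - \vv{X}_2(\vv{X}_1 u) = 0$ whenever both $\vv{X}_j u$ vanish.

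Second, and by the same token, for a fixed $u \in \D'(M)$ the three conditions ``$\vv{L} u \in \cinfty(M)$ for every $\vv{L}$ in $\mathcal{L}$, in $\Span_\R \mathcal{L}$, or in $\lie \mathcal{L}$'' are all equivalent. The span version is immediate from linearity; for $\lie \mathcal{L}$, the observation is that if $\vv{X}_1 u$ and $\vv{X}_2 u$ are already smooth then $[\vv{X}_1, \vv{X}_2] u = \vv{X}_1(\vv{X}_2 u) - \vv{X}_2(\vv{X}_1 u)$ is smooth too, because smooth vector fields send $\cinfty(M)$ to $\cinfty(M)$. An induction on bracket depth then covers every element of $\lie \mathcal{L}$.

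Putting these two ingredients together, both the premise and the conclusion of property~\eqref{eq:AGH} are the same statement about $u$ regardless of whether one uses $\mathcal{L}$, $\Span_\R \mathcal{L}$ or $\lie \mathcal{L}$, so the three (AGH) properties coincide. I do not foresee a serious obstacle: the only minor point that deserves care is carrying out the bracket-depth induction cleanly, which is a formal consequence of the standard generating description of $\lie \mathcal{L}$ by nested brackets of $\mathcal{L}$.
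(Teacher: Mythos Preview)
Your argument is correct and follows essentially the same approach as the paper: both rely on the equality $\ker \mathcal{L} = \ker \Span_\R \mathcal{L} = \ker \lie \mathcal{L}$ and on the fact that smoothness of $\vv{L}u$ for $\vv{L} \in \mathcal{L}$ propagates to all of $\lie \mathcal{L}$ via the iterated-bracket description. The only cosmetic difference is that the paper phrases the forward direction as a general principle (``if $\mathcal{L} \sset \mathcal{L}'$ share a kernel and $\mathcal{L}$ is $\mathrm{(AGH)}$ then so is $\mathcal{L}'$'') and then closes the cycle with $\eqref{it:agh3} \Rightarrow \eqref{it:agh1}$, whereas you observe more symmetrically that both the hypothesis and the conclusion in Definition~\ref{def:agh_systems} are literally unchanged under the three choices of family.
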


Now we turn our attention to the case when $M$ is a compact Lie group $G$, which we will always endow with a Riemannian metric that is $\ad$-invariant, i.e., it is left-invariant and the inner product induced on the Lie algebra $\gr{g}$ turns the linear endomorphism $\vv{Y} \in \gr{g} \mapsto [\vv{X}, \vv{Y}] \in \gr{g}$ into a skew-symmetric map for every $\vv{X} \in \gr{g}$: the main advantage of this assumption is to make every element of $\gr{g}$ into a skew-symmetric operator w.r.t.~the $L^2(G)$ inner product; and these vector fields, moreover, commute with the associated Laplace-Beltrami operator $\Delta_G$. Metrics with this property always exist~\cite[Proposition~4.24]{knapp_lgbi}.

Given $\mathcal{L} \dfn \{\vv{L}_1, \ldots, \vv{L}_r\}$ a finite family of left-invariant vector fields on $G$, we proceed to show a few equivalent characterizations for its almost global hypoellipticity in $G$. First, it is clear that if we define the vector-valued differential operator $\mathrm{D}_{\mathcal{L}}: \cinfty(G) \rarr \cinfty(G)^r$ by
\begin{align*}
  \mathrm{D}_{\mathcal{L}} &\dfn ( \vv{L}_1, \ldots, \vv{L}_r )
\end{align*}
then $\mathcal{L}$ is~$\mathrm{(AGH)}$ in $G$ if and only if the same property holds for $\mathrm{D}_{\mathcal{L}}$ (with the obvious meaning). By results in~\cite{araujo19} the latter is equivalent to $\mathrm{D}_{\mathcal{L}}: \cinfty(G) \rarr \cinfty(G)^r$ having closed range, which is further characterized by the existence of constants $C, \rho > 0$ such that
\begin{align} \label{Eq:closed_range_systems_eq}
  \left( \sum_{j = 1}^r \| \vv{L}_j \phi \|_{L^2(G)}^2 \right)^{\frac{1}{2}} &\geq C(1 + \lambda)^{-\rho} \| \phi \|_{L^2(G)}, \quad \forall \phi \in E_\lambda^G \cap (\ker \mathcal{L})^\bot, \ \forall \lambda \in \sigma(\Delta_G). 
\end{align}

Now to $\mathcal{L}$ we may also associate a sublaplacian
\begin{align*}
  \Delta_{\mathcal{L}} &\dfn -\sum_{j = 1}^r \vv{L}_j^2
\end{align*}
which is a second-order scalar LPDO on $G$, and also left-invariant: by the very same results in~\cite{araujo19}, $\Delta_{\mathcal{L}}$ is~$\mathrm{(AGH)}$ in $G$ if and only if $\Delta_{\mathcal{L}}: \cinfty(G) \rarr \cinfty(G)$ has closed range, also equivalent to the inequality
\begin{align*}
  \| \Delta_{\mathcal{L}} \phi \|_{L^2(G)} &\geq C(1 + \lambda)^{-\rho} \| \phi \|_{L^2(G)}, \quad \forall \phi \in E_\lambda^G \cap (\ker \Delta_{\mathcal{L}})^\bot, \ \forall \lambda \in \sigma(\Delta_G)
\end{align*}
for some constants $C, \rho > 0$. Notice however that while it is certainly true that $\ker \mathcal{L} \sset \ker \Delta_{\mathcal{L}}$ we also have
\begin{align*}
  \langle \Delta_{\mathcal{L}} u, u \rangle_{L^2(G)} = - \sum_{j = 1}^r \langle \vv{L}_j^2 u, u \rangle_{L^2(G)} = \sum_{j = 1}^r \| \vv{L}_j u \|_{L^2(G)}^2, \quad u \in \cinfty(G),
\end{align*}
ensuring that $\ker \mathcal{L} \cap \cinfty(G) = \ker \Delta_{\mathcal{L}} \cap \cinfty(G)$ and leading to the validity of the reverse inclusion by a density argument.

Also, it is certain that if $\vv{L}_j u \in \cinfty(G)$ for every $j \in \{1, \ldots, r\}$ then $\Delta_{\mathcal{L}} u \in \cinfty(G)$, hence by the previous observation regarding the kernels we have that
\begin{align*}
  \text{$\Delta_{\mathcal{L}}$ is~$\mathrm{(AGH)}$ in $G$} &\Longrightarrow \text{$\mathcal{L}$ is~$\mathrm{(AGH)}$ in $G$}.
\end{align*}
Conversely, if $\mathcal{L}$ is~$\mathrm{(AGH)}$ in $G$ then there exist $C, \rho > 0$ such that, for every $\lambda \in \sigma(\Delta_G)$ and every $\phi \in E_\lambda^G \cap (\ker \mathcal{L})^\bot = E_\lambda^G \cap (\ker \Delta_{\mathcal{L}})^\bot$:
\begin{align*}
  \| \phi \|_{L^2(G)} \| \Delta_{\mathcal{L}} \phi \|_{L^2(G)} \geq \langle \Delta_{\mathcal{L}} \phi, \phi \rangle_{L^2(G)} = \sum_{j = 1}^r \| \vv{L}_j \phi \|_{L^2(G)}^2 \geq C^2 (1 + \lambda)^{-2 \rho} \| \phi \|_{L^2(G)}^2
\end{align*}
hence
\begin{align*}
  \| \Delta_{\mathcal{L}} \phi \|_{L^2(G)} &\geq C^2 (1 + \lambda)^{-2 \rho} \| \phi \|_{L^2(G)}
\end{align*}
that is, $\Delta_{\mathcal{L}}$ is~$\mathrm{(AGH)}$ in $G$: these considerations are summarized in the next statement.
\begin{Prop} \label{Prop:system_AGH-solv} For $\mathcal{L} \dfn \{\vv{L}_1, \ldots, \vv{L}_r\} \sset \gr{g}$ the following properties are equivalent:
  \begin{enumerate}
  \item $\mathcal{L}$ is~$\mathrm{(AGH)}$ in $G$.
  \item $\mathrm{D}_{\mathcal{L}}: \cinfty(G) \rarr \cinfty(G)^r$ has closed range.
  \item $\mathrm{D}_{\mathcal{L}}$ is~$\mathrm{(AGH)}$ in $G$.
  \item $\Delta_{\mathcal{L}}: \cinfty(G) \rarr \cinfty(G)$ has closed range.
  \item $\Delta_{\mathcal{L}}$ is~$\mathrm{(AGH)}$ in $G$.
  \end{enumerate}
\end{Prop}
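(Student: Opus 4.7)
The plan is to stitch together the observations already made in the paragraphs preceding the statement. First I would note $(1) \Iff (3)$: unfolding the definitions, $\mathrm{D}_{\mathcal{L}} u \in \cinfty(G)^r$ means exactly $\vv{L}_j u \in \cinfty(G)$ for each $j$, and a smooth $v$ satisfies $\mathrm{D}_{\mathcal{L}} v = \mathrm{D}_{\mathcal{L}} u$ precisely when $u - v \in \ker \mathcal{L}$, so the two (AGH) statements are verbatim translations of each other. Next, I would invoke the cited results from~\cite{araujo19} to identify $(2) \Iff (3)$ via the spectral inequality~\eqref{Eq:closed_range_systems_eq}, and analogously $(4) \Iff (5)$ via the scalar analogue
\begin{align*}
  \|\Delta_{\mathcal{L}} \phi\|_{L^2(G)} &\geq C(1+\lambda)^{-\rho} \|\phi\|_{L^2(G)}, \quad \forall \phi \in E_\lambda^G \cap (\ker \Delta_{\mathcal{L}})^\bot, \ \forall \lambda \in \sigma(\Delta_G),
\end{align*}
using that both $\mathrm{D}_{\mathcal{L}}$ and $\Delta_{\mathcal{L}}$ are left-invariant, so they commute with $\Delta_G$ and preserve each finite-dimensional $E_\lambda^G$.

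It remains to close the loop between the system $\mathcal{L}$ and the scalar operator $\Delta_{\mathcal{L}}$, and the crucial step is the identification $\ker \mathcal{L} = \ker \Delta_{\mathcal{L}}$ in $\D'(G)$. The inclusion $\ker \mathcal{L} \sset \ker \Delta_{\mathcal{L}}$ is trivial. For the reverse, since both operators are left-invariant they restrict to linear endomorphisms of each $E_\lambda^G$, whose elements are smooth; on such an eigenspace the pointwise identity
\begin{align*}
  \langle \Delta_{\mathcal{L}} \phi, \phi \rangle_{L^2(G)} &= \sum_{j=1}^r \|\vv{L}_j \phi\|_{L^2(G)}^2
\end{align*}
forces $\Delta_{\mathcal{L}} \phi = 0$ iff $\vv{L}_j \phi = 0$ for every $j$. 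A Fourier expansion along the eigenspaces of $\Delta_G$ then transfers the equality of kernels to all of $\D'(G)$.

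With this identification, $(5) \impl (1)$ is immediate: if $\vv{L}_j u \in \cinfty(G)$ for every $j$ then $\Delta_{\mathcal{L}} u \in \cinfty(G)$, so $(5)$ yields $v \in \cinfty(G)$ with $\Delta_{\mathcal{L}}(u-v) = 0$, i.e., $u - v \in \ker \Delta_{\mathcal{L}} = \ker \mathcal{L}$. For the converse $(1) \impl (5)$, I would feed the spectral inequality characterizing $(1)$ into the Cauchy--Schwarz estimate
\begin{align*}
  \|\Delta_{\mathcal{L}} \phi\|_{L^2(G)} \|\phi\|_{L^2(G)} &\geq \langle \Delta_{\mathcal{L}} \phi, \phi \rangle_{L^2(G)} = \sum_{j=1}^r \|\vv{L}_j \phi\|_{L^2(G)}^2 \geq C^2(1+\lambda)^{-2\rho} \|\phi\|_{L^2(G)}^2
\end{align*}
valid on $E_\lambda^G \cap (\ker \mathcal{L})^\bot = E_\lambda^G \cap (\ker \Delta_{\mathcal{L}})^\bot$, yielding precisely the spectral bound for $\Delta_{\mathcal{L}}$ that characterizes $(5)$.

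The main obstacle, as I see it, is not conceptual but bookkeeping: the equivalence of (AGH) with closed range from~\cite{araujo19} must be applied cleanly in two different settings (a vector-valued first-order operator and a scalar second-order one), and one must be careful that the degenerations $\rho$ and constants $C$ transfer properly through the Cauchy--Schwarz step (where $\rho$ doubles). Once that and the kernel identification are set up, the whole proposition reduces to elementary linear algebra on the finite-dimensional eigenspaces of $\Delta_G$.
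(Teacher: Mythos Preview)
Your proposal is correct and follows essentially the same route as the paper: the discussion preceding the proposition already establishes $(1)\Leftrightarrow(3)$ by definition, $(2)\Leftrightarrow(3)$ and $(4)\Leftrightarrow(5)$ via the spectral inequalities from~\cite{araujo19}, the kernel identity $\ker\mathcal{L}=\ker\Delta_{\mathcal{L}}$ (by the energy identity on smooth functions, extended to $\D'(G)$ by density where you instead use the Fourier decomposition---an equivalent and perfectly valid choice), and then closes the loop $(5)\Rightarrow(1)$ directly and $(1)\Rightarrow(5)$ via Cauchy--Schwarz exactly as you do. There is no substantive difference.
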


For an arbitrary $\mathcal{L} \sset \gr{g}$, if $\mathcal L_0 \dfn \{\vv{L}_1, \ldots, \vv{L}_r\}$ and $\mathcal L_1 \dfn \{\vv{L}_1', \ldots, \vv{L}_r'\}$ are two bases of $\Span_\R \mathcal{L} \sset \gr{g}$ it follows from Lemma~\ref{lem:LspanlieAGH} that $\mathcal L_0$ is~$\mathrm{(AGH)}$ in $G$ if and only if the same property holds for $\mathcal L_1$. Now, by Proposition~\ref{Prop:system_AGH-solv}, $\mathcal L_i$ is~$\mathrm{(AGH)}$ in $G$ if and only if the associated operator $\mathrm{D}_{\mathcal{L}_i}: \cinfty(G) \rarr \cinfty(G)^r$ has closed range. This remark motivates the following:
\begin{Def} \label{Def:arbitrary-system-solv} Let $\mathcal L$ be an arbitrary system of left-invariant vector fields in $G$. We say that $\mathcal L$ is \emph{globally solvable in $\cinfty$} if there exists a basis $\mathcal L_0 \dfn \{\vv{L}_1, \ldots, \vv{L}_r\}$ of $\Span_\R \mathcal L$ such that the associated operator $\mathrm D_{\mathcal L_0}:\cinfty(G) \longrightarrow \cinfty(G)^r$ has closed range.
\end{Def}

While it is well known that tori may carry real, left-invariant vector fields which are globally hypoelliptic~\cite{gw72}, the circumstances are not so favorable for non-Abelian compact Lie groups $G$~\cite{gw73b}; actually, no commutative subalgebra of $\gr{g}$ enjoys that property~\cite[Corollary~8.9]{afr20}. As for \emph{almost} global hypoellipticity, the situation is radically different as the next simple example shows; the reader is referred to e.g.~\cite[Chapter~11]{rt_psdoas} for omitted details and computations.

\begin{Exa}[An $\mathrm{(AGH)}$ vector field on $\SU(2)$] \label{exa:su2} On the special unitary group $G = \SU(2)$, the real left-invariant vector field $\vv{X}$ corresponding to the matrix
  \begin{align*}
    \frac{1}{2} \left(
    \begin{array}{c c}
      i & 0 \\
      0 & -i
    \end{array}
    \right) &\in \gr{su}(2)
  \end{align*}
  (hence $\vv{X} = -i \del_0$, where $\del_0$ is the so-called \emph{neutral operator} of $\SU(2)$) enjoys the following property. For each $\lambda \in \sigma(\Delta_G)$ there exists an orthonormal basis of $E_\lambda^G$ (w.r.t.~a suitable $\ad$-invariant metric)
  \begin{align*}
    \chi^\lambda_1, \ldots, \chi^\lambda_{c_\lambda}, \varphi^\lambda_{c_\lambda + 1}, \ldots, \varphi^\lambda_{d_\lambda^G}
  \end{align*}
  (with $c_\lambda \geq 1$ for $\lambda \neq 0$) formed by eigenvectors of $\vv{X}$~\cite[Proposition~11.9.2]{rt_psdoas}:
  \begin{align*}
    \vv{X} \chi_j^\lambda &= 0, \quad j \in \{1, \ldots, c_\lambda\} \\
    \vv{X} \varphi_j^\lambda &= \gamma_j^\lambda \varphi_j^\lambda, \quad j \in \{c_\lambda + 1, \ldots, d_\lambda^G\}
  \end{align*}
  with $|\gamma_j^\lambda| \geq 1/2$ for every $j \in \{c_\lambda + 1, \ldots, d_\lambda^G\}$. In particular, a general $\phi \in E_\lambda^G \cap (\ker \vv{X})^\bot$ is written as
  \begin{align*}
    \phi &= \sum_{j = c_\lambda + 1}^{d_\lambda^G} \langle \phi, \varphi_j^\lambda \rangle_{L^2(G)}  \varphi_j^\lambda
  \end{align*}
  and thus
  \begin{align*}
    \vv{X} \phi = \sum_{j = c_\lambda + 1}^{d_\lambda^G} \langle \phi, \varphi_j^\lambda \rangle_{L^2(G)} \gamma_j^\lambda \varphi_j^\lambda &\Longrightarrow \| \vv{X} \phi \|_{L^2(G)}^2 = \sum_{j = c_\lambda + 1}^{d_\lambda^G} |\langle \phi, \varphi_j^\lambda \rangle_{L^2(G)}|^2 |\gamma_j^\lambda|^2 \geq \frac{1}{4} \| \phi\|_{L^2(G)}^2
  \end{align*}
  from which it follows that an inequality like~\eqref{Eq:closed_range_systems_eq} holds for $\vv{X}$, which is then~$\mathrm{(AGH)}$ in $G$. Notice that $\chi^\lambda_1, \ldots, \chi^\lambda_{c_\lambda} \in \ker \vv{X}$ and $c_\lambda \geq 1$ for every $\lambda \in \sigma(\Delta_G)$, hence the kernel of $\vv{X}$ is infinite dimensional: in particular, $\vv{X}$ is not~$\mathrm{(GH)}$ in $G$.
\end{Exa}

\section{Sums of squares of tube type} \label{Sec:tube-type}

Let $T, G$ be two compact manifolds as in Section~\ref{sec:partial_exp}. Here, however, we shall assume that $G$ is a Lie group (whose Lie algebra we still denote by $\gr{g}$), and while we impose no extra condition on the Riemannian metric on $T$ the one on $G$ will be assumed $\ad$-invariant. On time, we will assume by simplicity that both $T$ and $G$ have total measure equal to $1$.

As in~\cite{afr20} we will consider differential operators on $M = T \times G$ of the form
\begin{align}
  P &\dfn \Delta_T - \sum_{\ell = 1}^N \Big( \sum_{j = 1}^m a_{\ell j}(t) \vv{X}_j + \vv{W}_\ell \Big)^2 \label{eq:Pdef}
\end{align}
where $\vv{X}_1, \ldots, \vv{X}_m \in \gr{g}$ form a basis of left-invariant vector fields in $G$, $a_{\ell j} \in \cinfty(T; \R)$ and $\vv{W}_\ell$ are skew-symmetric real vector fields in $T$. In order to shorten the notation let $\gr{a}_1, \ldots, \gr{a}_N: T \rarr \gr{g}$ be smooth applications defined by
\begin{align*}
  \mathfrak{a}_\ell(t) &\dfn \sum_{j = 1}^m a_{\ell j}(t) \vv{X}_j, \quad t \in T, 
\end{align*}
which we will denote by $\gr{a}_\ell(t, \vv{X})$ when we want to stress their interpretation as differential operators on $T \times G$. We will denote by $\mathcal{L}$ the system of vector fields on $G$ defined as follows:
\begin{align}
  \mathcal{L} &\dfn \bigcup_{\ell = 1}^N \ran \gr{a}_\ell \sset \gr{g} \label{eq:sys_fromaell}
\end{align}
where a left-invariant vector field $\vv{L}$ belongs to $\ran \gr{a}_\ell$ if and only if there exists $t \in T$ such that $\vv{L} = \gr{a}_\ell(t)$. For each $\ell \in \{1, \ldots, N\}$ we also set
\begin{align}
  \mathcal{L}_\ell &\dfn \Span_\R \ran \mathfrak{a}_\ell \sset \gr{g}. \label{eq:Lell}
\end{align}
Recall that by restriction to functions on $T \times G$ that do not depend on the second variable, $P$ induces an elliptic operator in $T$ (see~\cite[Section 4]{afr20})
\begin{align}\label{eq:Ptil}
  \tilde{P} &\dfn \Delta_T - \sum_{\ell = 1}^N  \vv{W}_\ell^2
\end{align}
and thanks to~\cite[Corollary~4.2]{afr20} we know that if $u \in \D'(T \times G)$ is such that $Pu\in \cinfty(T\times G)$ the ellipticity of $\tilde{P}$ implies that $\mathcal{F}_\lambda^G(u)\in \cinfty(T; E^G_{\lambda})$ for every $\lambda\in \sigma(\Delta_G)$.

The first step to characterize the global solvability of $P$ is to describe $\ker \PT$. In our case, the operator $P$ is self-adjoint therefore $\ker \PT= \ker P$. For the next result we also recall that since $P$ commutes with $\Delta_G$ it also commutes with the partial Fourier projections $\mathcal F_\lambda^G$~\cite[Proposition~2.11]{afr20}. Here and below $\phi_1^{\lambda}, \ldots, \phi_{d_\lambda^G}^{\lambda}$ will denote any orthonormal basis of $E^G_\lambda$.
\begin{Prop}\label{Pro:kernel} For $P$ as in~\eqref{eq:Pdef}, a distribution $u \in \D'(T \times G)$ belongs to $\ker P$ if and only if $u$ does not depend on $T$ and $u \in \ker \mathfrak{a}_\ell(t)$ for every $t \in T$ and $\ell \in \{1,\ldots,N\}$.
  \begin{proof} It is immediate from its definition~\eqref{eq:Pdef} that $P$ annihilates any distribution that does not depend on $T$ and belongs to $\ker \mathfrak{a}_\ell(t)$ for every $t \in T$ and $\ell \in \{1,\ldots,N\}$. Conversely, if $Pu = 0$ we have that $\mathcal{F}_\lambda^G (u) \in \cinfty(T; E^G_{\lambda})$ for every $\lambda\in \sigma(\Delta_G)$, and it follows from~\cite[Lemma~3.1]{afr20} that
    \begin{align} \label{Pro:kernel-eq1}
      \langle P \mathcal F_\lambda^G (u), \mathcal F_\lambda^G (u) \rangle_{L^2(T\times G)} &= \langle \Delta_T \mathcal F_\lambda^G (u),\mathcal F_\lambda^G (u) \rangle_{L^2(T\times G)} + \sum_{\ell=1}^N \| \vv{Y}_\ell \mathcal F_\lambda^G (u) \|^2_{L^2(T\times G)}
    \end{align}
    equals zero, where
    \begin{align} \label{Eq:def_y_ell}
      \vv{Y}_\ell &\dfn \mathfrak a_\ell(t,\vv{X}) + \vv{W}_\ell, \quad \ell \in \{1,\ldots,N\}.
    \end{align}
    Notice that all the terms in the right-hand side of~\eqref{Pro:kernel-eq1} are non-negative, hence equal to zero. If we write
    \begin{align*}
      \mathcal F_\lambda^G (u) = \sum_{i=1}^{d^G_\lambda} \mathcal F_\lambda^G (u)_i \otimes \phi_i^\lambda, \quad \mathcal F_\lambda^G (u)_i \in \cinfty(T),
    \end{align*}
    then
    \begin{align} \label{eq:equality-energy-part}
      \langle \Delta_T \mathcal F_\lambda^G (u),\mathcal F_\lambda^G (u) \rangle_{L^2(T\times G)} &= \sum_{i=1}^{d_\lambda^G} \|\dd_T \mathcal F_\lambda^G (u)_i\|^2_{L^2(T)} 
    \end{align}
    is also zero. In fact,
    \begin{align*}
      \langle \Delta_T \mathcal F_\lambda^G (u), \mathcal F_\lambda^G (u) \rangle_{L^2(T\times G)} &= \left \langle \Delta_T \sum_{i=1}^{d^G_\lambda} \mathcal F_\lambda^G (u)_i \otimes \phi_i^\lambda, \sum_{j=1}^{d^G_\lambda} \mathcal F_\lambda^G (u)_j \otimes \phi_j^\lambda \right\rangle_{L^2(T\times G)}\\
      &= \left \langle  \sum_{i=1}^{d^G_\lambda} [\Delta_T \mathcal F_\lambda^G (u)_i] \otimes \phi_i^\lambda, \sum_{j=1}^{d^G_\lambda} \mathcal F_\lambda^G (u)_j \otimes \phi_j^\lambda \right\rangle_{L^2(T\times G)}\\
      &= \sum_{i=1}^{d_\lambda^G} \langle  \Delta_T \mathcal F_\lambda^G (u)_i,  \mathcal F_\lambda^G (u)_i \rangle_{L^2(T)}\\
      &= \sum_{i=1}^{d_\lambda^G} \langle  \dd_T \mathcal F_\lambda^G (u)_i, \dd_T \mathcal F_\lambda^G (u)_i \rangle_{L^2(T)}\\
      &= \sum_{i=1}^{d_\lambda^G} \|\dd_T \mathcal F_\lambda^G (u)_i \|^2_{L^2(T)}.
    \end{align*}
    Since $T$ is connected it follows that $\mathcal F_\lambda^G (u)_i$ is constant for every $i \in \{1,\ldots,d_\lambda^G\}$, so $\Delta_T \mathcal F_\lambda^G(u) = 0$ and, since $\Delta_T$ commutes with $\Delta_G$,
    \begin{align*}
      \mathcal{F}_\lambda^G (\Delta_T u) = \Delta_T \mathcal{F}_\lambda^G (u) = 0, \quad \forall \lambda \in \sigma(\Delta_G).
    \end{align*}
    It follows from relationship~\eqref{eq:total_from_partials} applied to $f \dfn \Delta_T u$ that $\mathcal{F}_\alpha^M (\Delta_T u) = 0$ for every $\alpha \in \sigma(\Delta_M)$ hence $\Delta_T u = 0$, that is, $u$ does not depend on $T$ -- see Lemma~\ref{Lem:dist-const-T}. In particular, $\vv{W}_\ell u = 0$ for every $\ell \in \{1, \ldots, N\}$ so it easily follows from~\eqref{Pro:kernel-eq1} that
    \begin{align*}
      0 = \sum_{\ell=1}^N \| \vv{Y}_\ell \mathcal F_\lambda^G (u) \|^2_{L^2(T\times G)} = \sum_{\ell=1}^N \| \mathfrak a_\ell(t, \vv{X}) \mathcal F_\lambda^G (u) \|^2_{L^2(T\times G)}
    \end{align*}
    meaning that
    \begin{align*}
      \sum_{j = 1}^m a_{\ell j}(t) \left( \vv{X}_j \mathcal F_\lambda^G (u) \right)(x) &= 0, \quad \forall (t,x) \in T \times G
    \end{align*}
    i.e.~$\mathcal{F}_\lambda^G (u) \in E_\lambda^G$ is such that $\gr{a}_\ell(t) \mathcal{F}_\lambda^G (u) = 0$ for every $\lambda \in \sigma(\Delta_G)$ and every $t \in T$, from where our conclusion follows.
  \end{proof}
\end{Prop}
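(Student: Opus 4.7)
The plan is to prove the two implications separately, with the nontrivial direction relying on an energy identity for $P$ applied to each partial Fourier coefficient.

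For the \emph{if} direction, suppose $u$ does not depend on $T$ and $\mathfrak{a}_\ell(t) u = 0$ for every $(t,\ell)$. Since $u$ is independent of $T$ we have $\Delta_T u = 0$ and $\vv{W}_\ell u = 0$ for each $\ell$, so expanding $\vv{Y}_\ell^2 = (\mathfrak{a}_\ell(t,\vv{X}) + \vv{W}_\ell)^2$ in the definition of $P$ leaves only terms of the form $\mathfrak{a}_\ell(t,\vv{X})^2 u$, $\mathfrak{a}_\ell(t,\vv{X})\vv{W}_\ell u$, and $\vv{W}_\ell\mathfrak{a}_\ell(t,\vv{X}) u$; all of these vanish once one observes that $\mathfrak{a}_\ell(t)$ commutes past $\vv{W}_\ell$ in the relevant way because $\vv{W}_\ell$ differentiates only in $t$, and that by hypothesis $\mathfrak{a}_\ell(t)u = 0$ for all $t$.

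For the converse, assume $Pu = 0$. Since $P$ commutes with $\mathcal{F}_\lambda^G$, each $\mathcal{F}_\lambda^G u$ lies in the kernel of $P$, and because $\tilde{P}$ in~\eqref{eq:Ptil} is elliptic the cited Corollary~4.2 of~\cite{afr20} guarantees $\mathcal{F}_\lambda^G u \in \cinfty(T; E_\lambda^G)$ for every $\lambda \in \sigma(\Delta_G)$. This regularity makes the $L^2(T\times G)$-pairing $\langle P\mathcal{F}_\lambda^G u, \mathcal{F}_\lambda^G u\rangle$ meaningful, and the skew-symmetry of the $\vv{Y}_\ell$'s (which follows from the $\ad$-invariance of the metric on $G$ making each $\vv{X}_j$ skew and the standing assumption that $\vv{W}_\ell$ is skew on $T$) yields the identity~\eqref{Pro:kernel-eq1} via Lemma~3.1 of~\cite{afr20}. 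Every summand there is nonnegative, hence each vanishes.

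From the first summand and the tensor expansion~\eqref{eq:equality-energy-part} I would read off $d_T \mathcal{F}_\lambda^G(u)_i = 0$ for every $\lambda$ and $i$; connectedness of $T$ turns these into constants in $t$, so $\Delta_T \mathcal{F}_\lambda^G u = 0$, and summing via~\eqref{eq:total_from_partials} forces $\mathcal{F}_\alpha^M(\Delta_T u) = 0$ for all $\alpha$, i.e.\ $\Delta_T u = 0$; Lemma~\ref{Lem:dist-const-T} then gives the independence of $u$ on $T$. In particular $\vv{W}_\ell u = 0$, so the remaining summands reduce to $\|\mathfrak{a}_\ell(t,\vv{X})\mathcal{F}_\lambda^G u\|^2_{L^2(T\times G)} = 0$. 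The integrand $\sum_j a_{\ell j}(t)(\vv{X}_j \mathcal{F}_\lambda^G u)(x)$ is continuous on $T\times G$, so it vanishes pointwise, giving $\mathfrak{a}_\ell(t)\mathcal{F}_\lambda^G u = 0$ for all $t$, $\ell$, $\lambda$. Passing from these projection-level identities to the distributional statement $\mathfrak{a}_\ell(t)u = 0$ is automatic: regarded now as an element of $\D'(G)$, $u = \sum_\lambda \mathcal{F}_\lambda^G u$ converges in $\D'(G)$ and $\mathfrak{a}_\ell(t)$ acts termwise.

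The only genuine difficulty is the energy identity itself — once the commutation $[P,\mathcal{F}_\lambda^G]=0$, the elliptic regularity of $\tilde P$, and the skew-symmetry of $\vv{Y}_\ell$ are in place, the argument is a routine ``integrate by parts and read off each term'' calculation.
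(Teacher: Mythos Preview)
Your proposal is correct and follows essentially the same route as the paper: the energy identity~\eqref{Pro:kernel-eq1} from \cite[Lemma~3.1]{afr20}, nonnegativity of each summand, constancy of $\mathcal{F}_\lambda^G(u)_i$ via connectedness of $T$, Lemma~\ref{Lem:dist-const-T} for independence on $T$, and finally the pointwise vanishing of $\mathfrak{a}_\ell(t,\vv{X})\mathcal{F}_\lambda^G u$. Your closing remark on passing from the projection-level identities back to $\mathfrak{a}_\ell(t)u = 0$ makes explicit a step the paper leaves to the reader.
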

\subsection{The cluster associated with $P$} \label{sec:cluster_P}

Motivated by Proposition~\ref{Pro:kernel} we start this section studying the set of distributions on $G$ which are annihilated by $\mathfrak{a}_{\ell}(t) \in \gr{g}$ for every $t \in T$, where $\ell \in \{1, \ldots, N\}$ is fixed: we may rewrite our conclusion as
\begin{align*}
  \ker P &= \{ 1_T \otimes v \st v \in \D'(G), \ v \in \ker \gr{a}_\ell(t) \ \forall t \in T, \ \ell \in \{1, \ldots, N\} \}.
\end{align*}
From here on we denote $m^{\ell}\dfn \dim \mathcal{L}_\ell$ and fix $\vv{L}_1^\ell, \ldots, \vv{L}_{m^\ell}^\ell$ a basis of $\mathcal{L}_\ell$~\eqref{eq:Lell}: it allows us to write
\begin{align}
  \mathfrak{a}_\ell(t) &= \sum_{p=1}^{m^\ell} \alpha_{\ell p}(t) \vv{L}^\ell_p,\label{eq:novoa}
\end{align}
where $\alpha_{\ell 1},\ldots,\alpha_{\ell m^\ell}$ are $\R$-linearly independent elements of $\cinfty(T; \R)$. Making use of the notation introduced in Section~\ref{sec:systems_AGH} we have:

 

\begin{Lem} \label{Lem:kernel_a_ell} Given $\ell \in \{1,\ldots,N\}$ we have the following equality of sets:
  \begin{align*}
    \bigcap_{t \in T} \ker \gr{a}_\ell(t) = \ker ( \ran \gr{a}_\ell) = \ker \mathcal{L}_\ell = \bigcap_{p=1}^{m^\ell} \ker \vv{L}^\ell_p.
  \end{align*}
  \begin{proof} The first identity follows by definition; the second one, since every vector field in $\mathcal{L}_\ell$ is a linear combination of elements in $\ran \gr{a}_\ell$ while containing the latter; and the third one follows by similar arguments.
  \end{proof}
\end{Lem}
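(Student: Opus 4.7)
The plan is to verify each of the three equalities separately; none of them requires more than unpacking definitions together with the $\R$-linearity of differential operators acting on distributions.

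For the first equality $\bigcap_{t \in T} \ker \gr{a}_\ell(t) = \ker(\ran \gr{a}_\ell)$, I would observe that by the definition of $\ker$ applied to a family of vector fields (from the start of Section~\ref{sec:systems_AGH}), a distribution $u \in \D'(G)$ lies in $\ker(\ran \gr{a}_\ell)$ precisely when $\vv{L}u = 0$ for every $\vv{L} \in \ran \gr{a}_\ell = \{\gr{a}_\ell(t) \st t \in T\}$, which is literally the condition $u \in \ker \gr{a}_\ell(t)$ for every $t \in T$.

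For the second equality $\ker(\ran \gr{a}_\ell) = \ker \mathcal{L}_\ell$, the inclusion $\ker \mathcal{L}_\ell \sset \ker(\ran \gr{a}_\ell)$ is immediate from $\ran \gr{a}_\ell \sset \mathcal{L}_\ell = \Span_\R \ran \gr{a}_\ell$. Conversely, any $\vv{L} \in \mathcal{L}_\ell$ can be written as a finite $\R$-linear combination $\vv{L} = \sum_k c_k \gr{a}_\ell(t_k)$ with $c_k \in \R$ and $t_k \in T$; if $u$ is annihilated by each $\gr{a}_\ell(t_k)$, then $\vv{L}u = \sum_k c_k \gr{a}_\ell(t_k) u = 0$, so $u \in \ker \mathcal{L}_\ell$. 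The third equality $\ker \mathcal{L}_\ell = \bigcap_{p=1}^{m^\ell} \ker \vv{L}^\ell_p$ follows by the identical linearity argument applied to the basis $\vv{L}_1^\ell, \ldots, \vv{L}_{m^\ell}^\ell$ of $\mathcal{L}_\ell$: the inclusion $\sset$ holds because each $\vv{L}_p^\ell$ belongs to $\mathcal{L}_\ell$, and the reverse inclusion uses that every element of $\mathcal{L}_\ell$ is an $\R$-linear combination of the $\vv{L}_p^\ell$.

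I do not foresee any real obstacle. The only subtlety worth flagging is that the argument is purely algebraic, so it applies verbatim whether $u$ is a smooth function or a general distribution. The role of this lemma is simply organizational: it lets us replace the pointwise-in-$t$ kernel condition coming out of Proposition~\ref{Pro:kernel} by a kernel condition on a finite family of left-invariant vector fields on $G$, which is the form required to bring in the machinery of Section~\ref{sec:systems_AGH}.
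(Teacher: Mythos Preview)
Your proposal is correct and follows essentially the same approach as the paper's own proof, which simply notes that the first identity is definitional and the remaining two hold because taking the $\R$-span of a family of vector fields does not change the common kernel. You have merely made explicit the linearity argument that the paper leaves implicit.
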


We will consider $\mathcal{A}= (\mathcal{A}_{\lambda})_{\lambda\in \sigma(\Delta_G)}$ the spectral cluster on $G$ given by
\begin{align}
  \mathcal{A}_{\lambda} &\dfn E^G_{\lambda} \cap \Big( \bigcap_{\ell=1}^{N}\ker ( \ran \gr{a}_\ell)\Big), \quad \lambda \in \sigma(\Delta_G). \label{spectralclusterqueusamos}
\end{align}
The main reason to introduce it is that now we may rewrite the conclusions of Proposition~\ref{Pro:kernel} and Lemma~\ref{Lem:kernel_a_ell} as
\begin{align*}
  \ker \{P: \cinfty(T \times G) \longrightarrow \cinfty(T \times G)\} &= \{ 1_T \otimes v \st v \in \cinfty_{\mathcal{A}}(G)\},\\
  \ker \{P: \D'(T \times G) \longrightarrow \D'(T \times G)\} &= \{ 1_T \otimes v \st v \in \D'_{\mathcal{A}}(G)\}.
\end{align*}
It follows that our operator $P$ satisfies the hypothesis of Proposition~\ref{prop:gs_agh} and Corollary~\ref{cor:main_abstract}:
\begin{Prop} \label{Prop:P_kernel_dense} The kernel of $P: \cinfty(T\times G) \rarr \cinfty(T \times G)$ is dense in the kernel of $P: \D'(T\times G) \rarr \D'(T\times G)$.
  \begin{proof} Any $u \in \D'(T \times G)$ such that $Pu = 0$ is of the form $u = 1_T \otimes v$ for some $v \in \D'_{\mathcal{A}}(G)$. The latter can be written as
    \begin{align*}
      v &= \sum_{\lambda \in \sigma(\Delta_G)} \mathcal{F}_\lambda^G (v), \quad \mathcal{F}_\lambda^G (v) \in \mathcal{A}_\lambda,
    \end{align*}
    with convergence in $\D'(G)$, hence 
    \begin{align*}
      v_\nu &\dfn \sum_{\substack{\lambda \in \sigma(\Delta_G)\\ \lambda \leq \nu}} \mathcal{F}_\lambda^G (v)
    \end{align*}
    defines a sequence $\{ v_\nu \}_{\nu \in \N} \sset \cinfty_{\mathcal{A}}(G)$ such that $v_\nu \to v$ in $\D'(G)$. Therefore $u_\nu \dfn 1_T \otimes v_\nu \in \cinfty(T \times G)$ is annihilated by $P$ and converges to $u$ in $\D'(T \times G)$: we proved the density of $ \ker\{ P: \cinfty(T \times G) \to \cinfty(T \times G)\}$ in  $ \ker \{ P: \D'(T \times G) \to \D'(T \times G)\}$.
  \end{proof}
\end{Prop}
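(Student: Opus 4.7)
The plan is to leverage the explicit descriptions of both kernels provided immediately before the statement: every element of $\ker\{P:\D'(T\times G)\to \D'(T\times G)\}$ is of the form $1_T\otimes v$ with $v\in \D'_{\mathcal A}(G)$, and every element of $\ker\{P:\cinfty(T\times G)\to \cinfty(T\times G)\}$ has the analogous form with $v\in \cinfty_{\mathcal A}(G)$. So the problem reduces to approximating, in $\D'(G)$, an arbitrary $v\in \D'_{\mathcal A}(G)$ by a sequence in $\cinfty_{\mathcal A}(G)$, and then transferring the convergence to $T\times G$ via the tensor product with $1_T$.

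First I would write the Fourier expansion
\begin{align*}
 v = \sum_{\lambda\in \sigma(\Delta_G)} \mathcal F^G_\lambda(v),
\end{align*}
with convergence in $\D'(G)$ by the $G$-analogue of Proposition~\ref{prop:charac_smoothness_fourier_proj}. Since $v\in \D'_{\mathcal A}(G)$, each coefficient $\mathcal F^G_\lambda(v)$ lies in $\mathcal A_\lambda\subset E^G_\lambda$; crucially, $\mathcal A_\lambda$ is a finite-dimensional space of smooth functions, so the partial sums
\begin{align*}
 v_\nu \dfn \sum_{\substack{\lambda\in \sigma(\Delta_G)\\ \lambda\leq \nu}} \mathcal F^G_\lambda(v)
\end{align*}
are finite linear combinations of Laplace eigenfunctions, hence belong to $\cinfty(G)$, and by construction their partial Fourier projections lie in $\mathcal A_\lambda$ for each $\lambda$, so $v_\nu\in \cinfty_{\mathcal A}(G)$. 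The convergence $v_\nu\to v$ in $\D'(G)$ is just the statement that the Fourier series of $v$ converges to $v$ in $\D'(G)$.

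Finally I would set $u_\nu\dfn 1_T\otimes v_\nu \in \cinfty(T\times G)$, which is annihilated by $P$ thanks to Proposition~\ref{Pro:kernel} together with the fact that $v_\nu\in \cinfty_{\mathcal A}(G)\subset \bigcap_{t,\ell}\ker \gr a_\ell(t)$. It remains to show $u_\nu\to 1_T\otimes v$ in $\D'(T\times G)$: for any test function $\varphi\in \cinfty(T\times G)$,
\begin{align*}
 \langle 1_T\otimes v_\nu,\varphi\rangle = \left\langle v_\nu,\int_T \varphi(t,\cdot)\,\dd V_T(t)\right\rangle \longrightarrow \left\langle v,\int_T \varphi(t,\cdot)\,\dd V_T(t)\right\rangle = \langle 1_T\otimes v,\varphi\rangle
\end{align*}
using that $\int_T \varphi(t,\cdot)\,\dd V_T(t)\in \cinfty(G)$ and that $v_\nu\to v$ in $\D'(G)$. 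No serious obstacle is expected; the only mild point is noting that elements of $\mathcal A_\lambda$ are automatically smooth (being Laplace-eigenfunctions) so that the partial sums really land in $\cinfty_{\mathcal A}(G)$ rather than merely in $\D'_{\mathcal A}(G)$.
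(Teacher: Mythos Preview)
Your proposal is correct and follows essentially the same route as the paper's proof: reduce to approximating $v\in\D'_{\mathcal A}(G)$ by partial Fourier sums $v_\nu\in\cinfty_{\mathcal A}(G)$, then tensor with $1_T$. You add a bit more detail (why $v_\nu$ is smooth, and the explicit verification that $1_T\otimes v_\nu\to 1_T\otimes v$ in $\D'(T\times G)$), but the argument is identical in substance.
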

Whenever it helps in the computations below we shall employ orthonormal bases adapted to $\mathcal{A}$: for each $\lambda\in \sigma(\Delta_G)$ we will denote by $\chi^\lambda_1, \ldots, \chi^\lambda_{c_\lambda}$ an orthonormal basis of $\mathcal{A}_{\lambda}$ and by $\varphi_{c_\lambda+1}^{\lambda},\ldots, \varphi_{d_\lambda^G}^{\lambda}$ an orthonormal basis of $\mathcal{A}_{\lambda}^\bot$. Then we have the following:

\begin{Prop} \label{Prop:invariance-cluster} The cluster $\mathcal{A}$ defined above is invariant under the action of $P$. 
  \begin{proof} Rewriting $P$ using~\eqref{eq:novoa}, one checks that its action on a tensor product is given by the formula
    \begin{multline} \label{eq:action_P_tensor}
      P(\psi \otimes \phi) = (\tilde{P}\psi) \otimes \phi + {} \\
      - \sum_{\ell = 1}^N \left( \sum_{p,p' = 1}^{m^\ell} (\alpha_{\ell p'} \alpha_{\ell p} \psi) \otimes (\vv{L}^\ell_{p'} \vv{L}^\ell_p \phi) + \sum_{p = 1}^{m^\ell} \left( (2 \alpha_{\ell p} \vv{W}_\ell + \vv{W}_\ell \alpha_{\ell p}) \psi \right) \otimes (\vv{L}^\ell_p \phi) \right)
    \end{multline}
    where $\tilde P$ was defined in~\eqref{eq:Ptil}. Let us prove that $\D'_{\mathcal A}(T\times G)$ is invariant by $P$: if $u \in \D'_{\mathcal A}(T\times G)$ then 
    \begin{align*}
      \mathcal F_\lambda^G (u) &= \sum_{i=1}^{c_\lambda} \mathcal F_\lambda^G (u)_i \otimes \chi^\lambda_i
    \end{align*}
    where $\mathcal F_\lambda^G (u)_i \in \D'(T)$ is uniquely determined. Using that $\chi^\lambda_i \in \ker \vv{L}_p^\ell$ for every $i \in \{1,\ldots, c_\lambda\}$, $p \in \{1, \ldots, m^\ell\}$ and $\ell \in \{1,\ldots, N\}$ thanks to Lemma~\ref{Lem:kernel_a_ell}, and making use of~\eqref{eq:action_P_tensor} as well as the fact that $P$ commutes with $\mathcal F_\lambda^G$~\cite[Proposition~2.11]{afr20} we have
    \begin{align*}
      \mathcal F_\lambda^G (P u) = P \mathcal F_\lambda^G (u) = \sum_{i=1}^{c_\lambda} P\left(\mathcal F_\lambda^G (u)_i\otimes \chi_i^\lambda\right) = \sum_{i=1}^{c_\lambda} \left(\tilde{P} [\mathcal F_\lambda^G (u)_i] \right)\otimes \chi_i^\lambda \in \D'(T;\mathcal A_\lambda).
    \end{align*}
    Thus we have proved that $P\left(\D'_{\mathcal A}(T\times G)\right) \subset \D'_{\mathcal A}(T\times G)$ (see Remark~\ref{rmk_P_tildeP_cluster} below).

    In order to prove the invariance of $\D'_{\mathcal A^\bot}(T\times G)$ we take $u$ there and as before we have to prove that
    \begin{align*}
      \mathcal F_\lambda^G (P u) &= P \mathcal F_\lambda^G (u) \in \D'(T;\mathcal A_\lambda^\bot), \quad \forall \lambda \in \sigma(\Delta_G).
    \end{align*}
    To do so, recall that every $\vv{X} \in \gr{g}$ is skew-symmetric w.r.t.~the $L^2(G)$ inner product hence given $\varphi \in \mathcal{A}_\lambda^\bot$ we have
    \begin{align*}
      \langle \vv{L}_p^\ell \varphi, \chi \rangle_{L^2(G)} = - \langle \varphi, \vv{L}_p^\ell \chi \rangle_{L^2(G)} = 0, \quad \forall \chi \in \mathcal{A}_\lambda
    \end{align*}
    i.e.~$\vv{L}_p^\ell \varphi \in \mathcal{A}_\lambda^\bot$ for every $p \in \{1, \ldots, m^\ell\}$ and $\ell \in \{1,\ldots, N\}$. Therefore, writing
    \begin{align*}
      \mathcal F_\lambda^G (u) &= \sum_{i=c_\lambda+1}^{d_\lambda^G} \mathcal F_\lambda^G (u)_i \otimes \varphi^{\lambda}_{i}
    \end{align*}
    we obtain from~\eqref{eq:action_P_tensor} the following expression for $P \mathcal F_\lambda^G (u)$:
    \begin{multline*}
      P \mathcal F_\lambda^G (u) = \sum_{i=c_\lambda+1}^{d_\lambda^G} \left (\tilde P[\mathcal F_\lambda^G (u)_i] \right) \otimes \varphi_i^\lambda + {}\\
      - \sum_{i=c_\lambda+1}^{d_\lambda^G} \sum_{\ell = 1}^N \left( \sum_{p, p' = 1}^{m^\ell} (\alpha_{\ell p'} \alpha_{\ell p} \mathcal F_\lambda^G (u)_i) \otimes (\vv{L}^\ell_{p'} \vv{L}^\ell_p \varphi_i^\lambda) + \sum_{p = 1}^{m^\ell} ((2 \alpha_{\ell p} \vv{W}_\ell + \vv{W}_\ell \alpha_{\ell p})\mathcal F_\lambda^G (u)_i) \otimes (\vv{L}^\ell_p \varphi_i^\lambda) \right)
    \end{multline*}
    which certainly belongs to $\D'(T; \mathcal{A}_\lambda^\bot) = \D'(T) \otimes \mathcal{A}_\lambda^\bot$ thanks to our previous argument.
  \end{proof}
\end{Prop}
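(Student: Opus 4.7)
The plan is to verify the two defining inclusions $P(\D'_{\mathcal{A}}(T\times G)) \subset \D'_{\mathcal{A}}(T\times G)$ and $P(\D'_{\mathcal{A}^\bot}(T\times G)) \subset \D'_{\mathcal{A}^\bot}(T\times G)$ separately, working fiberwise in the partial Fourier decomposition over $G$. Since $P$ commutes with $\Delta_G$, hence with each projection $\mathcal{F}_\lambda^G$, it suffices to prove that for every $\lambda \in \sigma(\Delta_G)$ the operator $P$ sends $\D'(T;\mathcal{A}_\lambda)$ to itself and $\D'(T;\mathcal{A}_\lambda^\bot)$ to itself.

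First I would use~\eqref{eq:novoa} to expand
\[
P = \tilde{P} - \sum_{\ell,p,p'} \alpha_{\ell p'}(t)\,\alpha_{\ell p}(t)\, \vv{L}_{p'}^\ell \vv{L}_p^\ell - \sum_{\ell,p} \bigl( 2\alpha_{\ell p}(t)\,\vv{W}_\ell + (\vv{W}_\ell \alpha_{\ell p})(t) \bigr)\, \vv{L}_p^\ell,
\]
so that every summand of $P$ factors as a differential operator acting only on $t$ times either the identity on $G$, a single $\vv{L}_p^\ell$, or a composition $\vv{L}_{p'}^\ell \vv{L}_p^\ell$. For $u \in \D'_{\mathcal{A}}(T\times G)$, write $\mathcal{F}_\lambda^G(u) = \sum_{i \leq c_\lambda} f_i \otimes \chi_i^\lambda$. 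By Lemma~\ref{Lem:kernel_a_ell}, each $\chi_i^\lambda$ is annihilated by every $\vv{L}_p^\ell$, so every term of $P$ carrying an $\vv{L}_p^\ell$-factor kills $\chi_i^\lambda$; only the $\tilde{P}$-summand survives, yielding $P \mathcal{F}_\lambda^G(u) = \sum_i (\tilde{P} f_i) \otimes \chi_i^\lambda \in \D'(T;\mathcal{A}_\lambda)$.

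For $\mathcal{A}_\lambda^\bot$-invariance the key point is that each $\vv{L}_p^\ell$ preserves $\mathcal{A}_\lambda^\bot$: it commutes with $\Delta_G$ (by $\ad$-invariance of the metric) so stabilises $E_\lambda^G$, and being $L^2(G)$-skew-symmetric it satisfies $\langle \vv{L}_p^\ell \varphi, \chi\rangle_{L^2(G)} = -\langle \varphi, \vv{L}_p^\ell \chi \rangle_{L^2(G)} = 0$ for $\varphi \in \mathcal{A}_\lambda^\bot$ and $\chi \in \mathcal{A}_\lambda$, again using Lemma~\ref{Lem:kernel_a_ell}. Writing $\mathcal{F}_\lambda^G(u) = \sum_{i > c_\lambda} f_i \otimes \varphi_i^\lambda$, each summand in the expansion above, applied to $\varphi_i^\lambda$, lands in $\mathcal{A}_\lambda^\bot$, with coefficient a distribution on $T$, so $P \mathcal{F}_\lambda^G(u) \in \D'(T;\mathcal{A}_\lambda^\bot)$.

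The subtle point I would watch for is the bookkeeping of the cross terms produced by the non-commutativity $[\vv{W}_\ell, \alpha_{\ell p}(t)] = (\vv{W}_\ell \alpha_{\ell p})(t)$, which I must arrange so that every surviving summand still carries at least one $\vv{L}_p^\ell$-factor; once that is done, the pointwise annihilation on $\mathcal{A}_\lambda$ and the skew-symmetry argument on $\mathcal{A}_\lambda^\bot$ dispose of everything, and the invariance of $\mathcal{A}$ under $P$ then follows from the characterisation recorded in Section~\ref{sec:invariant_clusters}.
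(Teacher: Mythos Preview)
Your proposal is correct and follows essentially the same approach as the paper: expand $P$ via~\eqref{eq:novoa} into a $\tilde P$-part plus terms each carrying at least one $\vv{L}_p^\ell$-factor, then use Lemma~\ref{Lem:kernel_a_ell} to kill these on $\mathcal{A}_\lambda$ and the skew-symmetry of $\vv{L}_p^\ell$ (together with $[\vv{L}_p^\ell,\Delta_G]=0$) to trap them in $\mathcal{A}_\lambda^\bot$. Your explicit remark that $\vv{L}_p^\ell$ stabilises $E_\lambda^G$ and your flagging of the cross term $(\vv{W}_\ell\alpha_{\ell p})$ are points the paper leaves implicit or absorbs into ``one checks'', but the argument is the same.
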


\begin{Rem} \label{rmk_P_tildeP_cluster} Now we take advantage of the last proof to obtain an equality that will be important for us afterwards:
  \begin{align} \label{eq:P_tildeP_cluster}
    P u &= \tilde{P} u, \quad \forall u \in \D'_{\mathcal A}(T\times G).
  \end{align}
  Indeed, for every $\lambda \in \sigma(\Delta_G)$ we have that
  \begin{align*}
    \mathcal F_\lambda^G (P u) = \sum_{i=1}^{c_\lambda} \left(\tilde P [\mathcal F_\lambda^G (u)_i]\right) \otimes \chi_i^\lambda = \tilde P \mathcal F^G_\lambda (u) = \mathcal F^G_\lambda (\tilde P u).
  \end{align*}
\end{Rem}

\section{A characterization of the global solvability of $P$} \label{sec:charac-P}

Our main result concerns the differential operator $P$ defined by~\eqref{eq:Pdef} on $T \times G$. In order to proceed we shall make use of the following additional hypothesis:
\begin{align} \label{it:thm15_hyp1}
  \text{for each $\ell \in \{1, \ldots, N\}$, $\mathfrak{a}_\ell(t_1)$ and $\mathfrak{a}_\ell(t_2)$  commute as vector fields in $G$, for any $t_1, t_2 \in T$}.
\end{align}
Also, recall that by $\mathcal{L}$ we denote the system of vector fields defined in~\eqref{eq:sys_fromaell}. Here is our statement.
\begin{Thm} \label{Thm:main-P} Let $P$ be as in~\eqref{eq:Pdef} and assume property~\eqref{it:thm15_hyp1}. The following are equivalent:
    \begin{enumerate}
    \item \label{it:thmmain1} $P$ is globally solvable in $\cinfty$;
    \item \label{it:thmmain2} $P$ is globally solvable in $\D'$;
    \item \label{it:thmmain3} $P$ is weakly globally solvable;
    \item \label{it:thmmain4} $P$ is~$\mathrm{(AGH)}$ in $T \times G$;
    \item \label{it:thmmain5} $\mathcal{L}$ is~$\mathrm{(AGH)}$ in $G$;
    \item \label{it:thmmain6} $\mathcal{L}$ is globally solvable in $\cinfty$.   
  \end{enumerate} 
\end{Thm}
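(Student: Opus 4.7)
The plan is to interlock the abstract machinery of Section~\ref{sec:gs-gen} with the cluster decomposition of Section~\ref{sec:cluster_P}, invoking the commutativity hypothesis~\eqref{it:thm15_hyp1} only in the substantive comparison between $P$ and $\mathcal{L}$.

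I would first collect the free equivalences. Since $P = \PT$, Proposition~\ref{cor:hom_frechet_glob_solv} gives $\eqref{it:thmmain1} \Leftrightarrow \eqref{it:thmmain2}$, and Proposition~\ref{Prop:P_kernel_dense} is precisely the density hypothesis of Corollary~\ref{cor:main_abstract} applied to $P$, so $\eqref{it:thmmain1} \Leftrightarrow \eqref{it:thmmain4}$. The implication $\eqref{it:thmmain2} \Rightarrow \eqref{it:thmmain3}$ is immediate, while $\eqref{it:thmmain5} \Leftrightarrow \eqref{it:thmmain6}$ follows by combining Lemma~\ref{lem:LspanlieAGH}, Proposition~\ref{Prop:system_AGH-solv}, and Definition~\ref{Def:arbitrary-system-solv}. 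It then suffices to establish $\eqref{it:thmmain5} \Rightarrow \eqref{it:thmmain4}$ and $\eqref{it:thmmain3} \Rightarrow \eqref{it:thmmain5}$ to close the loop.

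For $\eqref{it:thmmain5} \Rightarrow \eqref{it:thmmain4}$ (equivalently, $\eqref{it:thmmain5} \Rightarrow \eqref{it:thmmain1}$), I would use the invariance of $\mathcal{A}$ under $P$ (Proposition~\ref{Prop:invariance-cluster}) to decompose any candidate $f \in \cinfty(T\times G)$ satisfying~\eqref{eq:cinfty_cc} as $f = \pi_{\mathcal{A}}(f) + \pi_{\mathcal{A}^\bot}(f)$ and to seek $u$ in the corresponding direct sum. On the $\mathcal{A}$-component $P$ reduces to the elliptic operator $\tilde{P}$ (Remark~\ref{rmk_P_tildeP_cluster}), and solvability on each partial Fourier block $\cinfty(T; \mathcal{A}_\lambda)$ reduces to inverting an elliptic self-adjoint operator on $T$ -- standard, since the compatibility condition~\eqref{eq:cinfty_cc} kills the only obstruction, namely the one coming from the constants in $\ker \tilde{P}$. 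For the $\mathcal{A}^\bot$-component I would exploit~\eqref{it:thm15_hyp1}: for each fixed $\ell$ the commuting skew-symmetric family $\{\gr{a}_\ell(t)\}_{t \in T}$ is simultaneously diagonalizable on each $E_\lambda^G$, producing an orthonormal basis $\{\varphi_i^\lambda\}$ of $\mathcal{A}_\lambda^\bot$ with $\gr{a}_\ell(t) \varphi_i^\lambda = i \mu_{\ell i}^\lambda(t) \varphi_i^\lambda$ for smooth real $\mu_{\ell i}^\lambda$. Expanding $u$ and $f$ in this basis and using~\eqref{eq:action_P_tensor}, the equation $Pu = f$ on the $\mathcal{A}^\bot$-side reduces to a countable family of elliptic equations on $T$ with scalar potentials $\sum_\ell \mu_{\ell i}^\lambda(t)^2$; the $\mathrm{(AGH)}$ hypothesis on $\mathcal{L}$, translated via~\eqref{Eq:closed_range_systems_eq}, forces these potentials to admit a polynomial lower bound in $\lambda$ once integrated against suitable test functions. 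Standard elliptic estimates on $T$ then propagate the rapid decay of the Fourier coefficients of $f$ to those of $u$, and Corollary~\ref{cor:charac_smoothness_fourier_proj} yields $u \in \cinfty(T \times G)$. This is the step I expect to be the main obstacle: passing from the uniform estimate~\eqref{Eq:closed_range_systems_eq} on $G$ to one uniform in $(t, \lambda, i)$ requires precisely the vanishing of the commutators $[\gr{a}_\ell(t_1), \gr{a}_\ell(t_2)]$ supplied by~\eqref{it:thm15_hyp1}, without which the formula for $\|Pu\|^2$ acquires curvature-like cross terms that ruin the lower bound.

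For the reverse implication $\eqref{it:thmmain3} \Rightarrow \eqref{it:thmmain5}$, I would feed Lemma~\ref{lem:hormander_ineq} with test functions $f = \psi \otimes \phi$ and $g = \overline{\psi \otimes \phi}$, where $\phi \in E_\lambda^G \cap \mathcal{A}_\lambda^\bot$ has unit $L^2(G)$-norm and $\psi \in \cinfty(T; \R)$ is a cut-off concentrated near some $t_0 \in T$. The compatibility $f \in E$ is verified from Proposition~\ref{Pro:kernel} and the characterization $\ker P \cap \cinfty = \{ 1_T \otimes w : w \in \cinfty_{\mathcal{A}}(G) \}$ together with $\phi \perp \mathcal{A}_\lambda$. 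The left-hand side of~\eqref{eq:equivtocontinuity} then bounds $\| \psi \|_{L^2(T)}^2$ from below, while~\eqref{eq:action_P_tensor} and the joint diagonalization from~\eqref{it:thm15_hyp1} estimate the right-hand side by $\| \psi \|_{\sob^{k}(T)}$ times a power of $(1+\lambda)$ and $\sum_\ell \| \gr{a}_\ell(t_0) \phi \|_{L^2(G)}$. Letting $t_0$ run through finitely many points whose $\gr{a}_\ell(t_0)$'s span $\Span_\R \mathcal{L}$, and shrinking $\psi$ to concentrate at each of them, extracts the closed-range inequality~\eqref{Eq:closed_range_systems_eq} for $\mathcal{L}$, which is~\eqref{it:thmmain5} by Proposition~\ref{Prop:system_AGH-solv}.
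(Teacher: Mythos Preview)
Your collection of the free equivalences is correct and matches the paper exactly. The remaining two implications, however, diverge from the paper's argument in one harmless and one substantive way.

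For $\eqref{it:thmmain3} \Rightarrow \eqref{it:thmmain5}$ your approach works but is more involved than necessary, and you invoke~\eqref{it:thm15_hyp1} where the paper does not. The paper simply takes $f = 1_T \otimes \bar\phi$ and $g = 1_T \otimes \phi$ with $\phi \in \mathcal{A}_\lambda^\bot$ in Lemma~\ref{lem:hormander_ineq}; a direct computation of $P(1_T \otimes \phi)$ via~\eqref{eq:action_P_tensor} (no diagonalization needed) already bounds $\|P(1_T \otimes \phi)\|_{\sob^{j,k}}$ by a power of $(1+\lambda)$ times $\big(\sum_{\ell,p}\|\vv{L}_p^\ell \phi\|_{L^2(G)}^2\big)^{1/2}$, which is exactly~\eqref{Eq:closed_range_systems_eq}. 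No localization in $t$, no selection of finitely many $t_0$'s, and no use of~\eqref{it:thm15_hyp1} are required. In fact the paper remarks explicitly that~\eqref{it:thm15_hyp1} is used \emph{only} in the other implication.

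For $\eqref{it:thmmain5} \Rightarrow \eqref{it:thmmain4}$ there is a genuine gap. Hypothesis~\eqref{it:thm15_hyp1} guarantees that $\{\gr{a}_\ell(t)\}_{t\in T}$ commute \emph{for each fixed $\ell$}, so the simultaneously diagonalizing basis of $\mathcal{A}_\lambda^\bot$ depends on $\ell$: in the paper's notation it is $\{\varphi_i^{\lambda,\ell}\}$. There is no single basis in which every $\gr{a}_\ell(t)$ acts by a scalar, and consequently the equation $Pu=f$ on the $\mathcal{A}^\bot$-side does \emph{not} decouple into scalar elliptic problems on $T$ with potentials $\sum_\ell \mu_{\ell i}^\lambda(t)^2$; the cross terms between different $\ell$'s persist. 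The paper circumvents this by abandoning the attempt to solve directly and instead proving an \emph{a priori} energy estimate: for $\psi \in \cinfty(T;\mathcal{A}_\lambda^\bot)$ one bounds $\langle P\psi,\psi\rangle_{L^2}$ from below, term by term in $\ell$, using the $\ell$-dependent basis only inside the $\ell$-th summand. This yields $\|P\psi\|_{L^2} \geq C(1+\lambda)^{-\rho}\|\psi\|_{L^2}$, from which $P$ is shown to be $\mathrm{(GH)}_{\mathcal{A}^\bot}$ via Proposition~\ref{cor:partial_smoothness_converse}; the $\mathcal{A}$-side is then handled separately (essentially as you outline, using $P=\tilde P$ there and elliptic estimates on $T$) to conclude $\mathrm{(AGH)}$. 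Your treatment of the $\mathcal{A}$-component is fine; it is the decoupling on $\mathcal{A}^\bot$ that must be replaced by this energy argument.
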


Note that Proposition~\ref{cor:hom_frechet_glob_solv} yields equivalence between~\eqref{it:thmmain1} and~\eqref{it:thmmain2}, which in turn imply~\eqref{it:thmmain3}. Thanks to Corollary~\ref{cor:main_abstract} and Proposition~\ref{Prop:P_kernel_dense}, we have that~\eqref{it:thmmain1} and~\eqref{it:thmmain4} are equivalent, while Proposition~\ref{Prop:system_AGH-solv} shows the equivalence between~\eqref{it:thmmain5} and~\eqref{it:thmmain6}: the only implications left to prove are $\eqref{it:thmmain3} \Rightarrow \eqref{it:thmmain5}$ and $\eqref{it:thmmain5} \Rightarrow \eqref{it:thmmain4}$, to which we turn our attention in the next couple of sections.


\subsection*{Necessity of $\mathcal{L}$ to be~$\mathrm{(AGH})$ for weak global solvability of $P$}

Recall that $\mathcal{A}$ stands for the spectral cluster given by~\eqref{spectralclusterqueusamos}.
\begin{Prop} If $P$ is weakly globally solvable then $\mathcal{L}$ is~$\mathrm{(AGH)}$ in $G$.
  \begin{proof} First notice that since
    \begin{align*}
      \left\{ \vv{L}_p^\ell \st p \in \{1, \ldots, m^\ell\}, \ \ell \in \{1, \ldots, N\} \right\} 
    \end{align*}
    generates $\Span_\R \mathcal{L}$, Lemma~\ref{lem:LspanlieAGH} says that it is enough to check that the former is~$\mathrm{(AGH)}$. We then proceed to check almost global hypoellipticity of this system, which is characterized -- see~\eqref{Eq:closed_range_systems_eq} and Lemma~\ref{Lem:kernel_a_ell} -- by the existence of constants $C, \rho > 0$ such that
    \begin{align}
      \bigg( \sum_{\ell = 1}^N \sum_{p = 1}^{m^\ell} \| \vv{L}_p^\ell \phi \|_{L^2(G)}^2 \bigg)^{\frac{1}{2}} &\geq C (1 + \lambda)^{-\rho} \| \phi \|_{L^2(G)}, \quad \forall \phi \in \mathcal{A}_\lambda^\bot, \ \lambda \in \sigma(\Delta_G). \label{eq:agh_system}
    \end{align}
    
    In order to do so we start with some preliminary computations. Let $\phi \in E_\lambda^G$ for some $\lambda \in \sigma(\Delta_G)$. Then
    \begin{align*}
      P(1_T \otimes \phi) &= - \sum_{\ell = 1}^N \vv{Y}_\ell^2 (1_T \otimes \phi)
    \end{align*}
    where $\vv{Y}_\ell$ is given by~\eqref{Eq:def_y_ell}. Explicitly,
    \begin{align*}
      \vv{Y}_\ell^2 (1_T \otimes \phi) &= \sum_{p, p' = 1}^{m^\ell} (\alpha_{\ell p'} \alpha_{\ell p}) \otimes \left( \vv{L}_{p'}^\ell \vv{L}_p^\ell \phi \right) + \sum_{p = 1}^{m^\ell} \left( \vv{W}_\ell \alpha_{\ell p} \right) \otimes \vv{L}_p^\ell \phi .
    \end{align*}
    Let us compute their mixed Sobolev norms: for $j, k \in \Z_+$, since $\Delta_G$ commutes with each $\vv{L}_p^\ell$ we have that
    \begin{multline*}
      (I + \Delta_T)^j (I + \Delta_G)^k \vv{Y}_\ell^2 (1_T \otimes \phi) \\
      = (1 + \lambda)^k \bigg\{ \sum_{p, p' = 1}^{m^\ell} [ (I + \Delta_T)^j(\alpha_{\ell p'} \alpha_{\ell p}) ] \otimes ( \vv{L}_{p'}^\ell \vv{L}_p^\ell \phi ) + \sum_{p = 1}^{m^\ell} [ (I + \Delta_T)^j ( \vv{W}_\ell \alpha_{\ell p} ) ] \otimes \vv{L}_p^\ell \phi \bigg\}
    \end{multline*}
    from where one easily infers that
    \begin{align*}
      \| \vv{Y}_\ell^2 (1_T \otimes \phi) \|_{\sob^{j,k}(T \times G)} &\leq (1 + \lambda)^k \Big( \sum_{p, p' = 1}^{m^\ell} \| \alpha_{\ell p'} \alpha_{\ell p}  \|_{\sob^j(T)} \| \vv{L}_{p'}^\ell \vv{L}_p^\ell \phi  \|_{L^2(G)} + \sum_{p = 1}^{m^\ell} \| \vv{W}_\ell \alpha_{\ell p}  \|_{\sob^j(T)} \| \vv{L}_p^\ell \phi \|_{L^2(G)} \Big) \\
      &\leq C_\ell (1 + \lambda)^{k + \frac{1}{2}}  \sum_{p = 1}^{m^\ell} \Big( \sum_{p' = 1}^{m^\ell}  \| \alpha_{\ell p'} \alpha_{\ell p}  \|_{\sob^j(T)}  +  \| \vv{W}_\ell \alpha_{\ell p}  \|_{\sob^j(T)}  \Big) \| \vv{L}_p^\ell \phi \|_{L^2(G)}
    \end{align*}
    thanks to a simple application of~\cite[Lemma~6.6]{afr20}: the constant $C_\ell > 0$ depends only on $\vv{L}_1^\ell, \ldots, \vv{L}_{m^\ell}^\ell$. Therefore
    \begin{align} \label{eq:PestimadoporLs}
      \| P (1_T \otimes \phi) \|_{\sob^{j,k}(T \times G)} \leq \sum_{\ell = 1}^N  \| \vv{Y}_\ell^2 (1_T \otimes \phi) \|_{\sob^{j,k}(T \times G)} \leq C (1 + \lambda)^{k + \frac{1}{2}} \bigg( \sum_{\ell = 1}^N \sum_{p = 1}^{m^\ell} \| \vv{L}_p^\ell \phi \|_{L^2(G)}^2 \bigg)^{\frac{1}{2}}
    \end{align}
    where $C > 0$ depends on $j$, the vector fields $\vv{Y}_\ell$ and several dimensional constants, but not on $k$, $\lambda$ or $\phi$.

    By Lemma~\ref{lem:hormander_ineq}  there are $j, k\in \Z_+$ and $C' > 0$ with the property that (recall that $P$ in~\eqref{eq:Pdef} equals its own transpose)
    \begin{align} \label{lem:hormander_ineq'}
      \left|\int_{T \times G} f g \ \dd V \right| &\leq C' \| f \|_{\sob^{j}(T \times G)} \| P g\|_{\sob^{k}(T \times G)},
    \end{align}
    for every $g \in \cinfty(T \times G)$ and every $f \in E$~\eqref{eq:E}. Now if, for a given $\lambda \in \sigma(\Delta_G)$, we take $\phi \in \mathcal{A}_\lambda^\bot$ then for a smooth $v \in \ker P$ we use Proposition \ref{Pro:kernel} to obtain $\chi \in \mathcal{A}_\lambda$ such that $\mathcal{F}_\lambda^G(v) = 1_T \otimes \chi$, and thus
    \begin{align*}
      \langle 1_T \otimes \phi, v \rangle_{L^2(T \times G)} = \langle 1_T \otimes \phi, \mathcal{F}_\lambda^G(v) \rangle_{L^2(T \times G)} = \langle 1_T \otimes \phi, 1_T \otimes \chi \rangle_{L^2(T \times G)} = \langle \phi, \chi \rangle_{L^2(G)} = 0
    \end{align*}
    that is, $f \dfn 1_T \otimes \bar{\phi} \in E$. Taking $g \dfn 1_T \otimes \phi$ in~\eqref{lem:hormander_ineq'} together with~\eqref{eq:PestimadoporLs} yield
    \begin{align*}
      \| \phi \|_{L^2(G)}^2 &\leq C' \| 1_T \otimes \bar{\phi} \|_{\sob^{j}(T \times G)} \| P (1_T \otimes \phi)\|_{\sob^{ k}(T \times G)} \\
      &\leq C' \| 1_T \otimes \phi \|_{\sob^{j,j}(T \times G)} \| P (1_T \otimes \phi)\|_{\sob^{ k,k}(T \times G)} \\
      &\leq C' (1 + \lambda)^{j} \| \phi \|_{L^2(G)} C (1 + \lambda)^{k + \frac{1}{2}} \bigg( \sum_{\ell = 1}^N \sum_{p = 1}^{m^\ell} \| \vv{L}_p^\ell \phi \|_{L^2(G)}^2 \bigg)^{\frac{1}{2}}
    \end{align*}
    (where we used~\eqref{eq:mixed_sobolev_open}-\eqref{eq:sobolev_comparation} in the second inequality) which proves~\eqref{eq:agh_system} since
    \begin{align*}
      \bigg( \sum_{\ell = 1}^N \sum_{p = 1}^{m^\ell} \| \vv{L}_p^\ell \phi \|_{L^2(G)}^2 \bigg)^{\frac{1}{2}} &\geq (C C')^{-1} (1 + \lambda)^{-j - k - \frac{1}{2}} \| \phi \|_{L^2(G)}, \quad \forall \phi \in \mathcal{A}_\lambda^\bot, \ \lambda \in \sigma(\Delta_G).
    \end{align*}
  \end{proof}
\end{Prop}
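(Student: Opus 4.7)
The plan is to extract the characterization of almost global hypoellipticity of $\mathcal{L}$ in the quantitative form~\eqref{eq:agh_system} directly from the \emph{a priori} inequality that weak global solvability of $P$ must satisfy (Lemma~\ref{lem:hormander_ineq}). By Lemma~\ref{lem:LspanlieAGH} it suffices to prove (AGH) for the finite spanning system $\{\vv{L}_p^\ell : 1\leq p \leq m^\ell,\ 1\leq \ell \leq N\}$, which by Proposition~\ref{Prop:system_AGH-solv} and Lemma~\ref{Lem:kernel_a_ell} amounts to producing constants $C,\rho>0$ such that
\begin{align*}
  \bigg( \sum_{\ell=1}^N \sum_{p=1}^{m^\ell} \|\vv{L}_p^\ell \phi\|_{L^2(G)}^2 \bigg)^{1/2} \geq C(1+\lambda)^{-\rho} \|\phi\|_{L^2(G)}, \quad \forall \phi \in \mathcal{A}_\lambda^\bot, \ \lambda \in \sigma(\Delta_G).
\end{align*}

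First I would plug test functions of the form $1_T \otimes \phi$, with $\phi \in E_\lambda^G$, into the formula~\eqref{eq:action_P_tensor} for $P$, observing that the $\tilde P$ term drops since $1_T$ is $\tilde P$-harmonic. This yields an expression of $P(1_T\otimes \phi)$ in terms of $\vv{L}_p^\ell \phi$ and $\vv{L}_{p'}^\ell \vv{L}_p^\ell \phi$, tensored with smooth functions of $t$ built from the $\alpha_{\ell p}$ and $\vv{W}_\ell \alpha_{\ell p}$. Passing to the mixed Sobolev norm $\|\cdot\|_{\sob^{j,k}(T\times G)}$ and using that $\Delta_G$ commutes with each $\vv{L}_p^\ell$ together with the standard estimate $\|\vv{L}_{p'}^\ell \vv{L}_p^\ell \phi\|_{L^2(G)} \leq C(1+\lambda)^{1/2}\|\vv{L}_p^\ell \phi\|_{L^2(G)}$ from~\cite[Lemma~6.6]{afr20} leads to an upper bound
\begin{align*}
  \|P(1_T \otimes \phi)\|_{\sob^{j,k}(T\times G)} \leq C (1+\lambda)^{k+\frac{1}{2}} \bigg( \sum_{\ell,p} \|\vv{L}_p^\ell \phi\|_{L^2(G)}^2 \bigg)^{1/2}
\end{align*}
with $C$ independent of $\phi$, $\lambda$ and $k$.

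Next I would exploit the compatibility condition. Weak global solvability gives, by Lemma~\ref{lem:hormander_ineq} applied with $\transp P = P$, exponents $j,k\in\Z_+$ and a constant $C'>0$ such that $|\int f g \dd V| \leq C'\|f\|_{\sob^j}\|Pg\|_{\sob^k}$ for $f \in E$ and $g\in\cinfty(T\times G)$. For $\phi \in \mathcal{A}_\lambda^\bot$ I take $f \dfn 1_T\otimes \bar\phi$ and $g \dfn 1_T \otimes \phi$; the function $f$ belongs to $E$ because, by Proposition~\ref{Pro:kernel}, any smooth element of $\ker P$ is of the form $1_T\otimes \chi$ with $\chi \in \mathcal{A}_\lambda$ in the $\lambda$-Fourier component, and $\langle \phi, \chi\rangle_{L^2(G)}=0$ by the orthogonality defining $\mathcal{A}_\lambda^\bot$. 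The left-hand side of the inequality then equals $\|\phi\|_{L^2(G)}^2$ by Fubini.

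Finally, combining the estimate on $\|P(1_T\otimes\phi)\|_{\sob^{j,k}}$ with the comparison $\|\cdot\|_{\sob^k}\leq \|\cdot\|_{\sob^{k,k}}$ from~\eqref{eq:sobolev_comparation} and the obvious bound $\|1_T \otimes \bar\phi\|_{\sob^{j,j}} \leq (1+\lambda)^j \|\phi\|_{L^2(G)}$ yields
\begin{align*}
  \|\phi\|_{L^2(G)}^2 \leq C'' (1+\lambda)^{j+k+\frac{1}{2}} \|\phi\|_{L^2(G)} \bigg( \sum_{\ell,p} \|\vv{L}_p^\ell \phi\|_{L^2(G)}^2 \bigg)^{1/2},
\end{align*}
which is the sought estimate with $\rho = j+k+\tfrac{1}{2}$. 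The main technical point is the careful bookkeeping of the $\lambda$-dependence when passing from the full Sobolev norms in Lemma~\ref{lem:hormander_ineq} to the mixed norms on $T\times G$ and from there to purely $L^2(G)$ quantities for $\phi$; the use of the mixed norm bound in~\eqref{eq:sobolev_comparation} and the commutation of $\Delta_G$ with the $\vv{L}_p^\ell$ makes this bookkeeping essentially mechanical.
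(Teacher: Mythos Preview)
Your proposal is correct and follows essentially the same approach as the paper's proof: both reduce to the finite spanning system via Lemma~\ref{lem:LspanlieAGH}, compute $P(1_T\otimes\phi)$ explicitly and bound its mixed Sobolev norm using the commutation of $\Delta_G$ with the $\vv{L}_p^\ell$ and~\cite[Lemma~6.6]{afr20}, verify $1_T\otimes\bar\phi\in E$ through Proposition~\ref{Pro:kernel}, and then feed these into the H\"ormander-type inequality of Lemma~\ref{lem:hormander_ineq} to obtain~\eqref{eq:agh_system}. The only cosmetic difference is that you invoke the ready-made formula~\eqref{eq:action_P_tensor} instead of recomputing $\vv{Y}_\ell^2(1_T\otimes\phi)$ from scratch.
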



\subsection*{Sufficiency of $\mathcal{L}$ to be~$\mathrm{(AGH)}$ for almost global hypoellipticity of $P$}

The proof of this part will be done in several steps. We first observe that almost global hypoellipticity of $\mathcal L$ yields an \emph{a priori} inequality.
\begin{Thm}\label{Thm:LGHimpliesPGH} If $\mathcal{L}$ is~$\mathrm{(AGH)}$ then there exist $C, \rho > 0$ such that
  \begin{align*}
    \|P \psi\|_{L^2(T \times G)} &\geq C(1+\lambda)^{-\rho} \|\psi\|_{L^2(T \times G)}
  \end{align*}
  for every $\psi \in \cinfty(T; \mathcal{A}_\lambda^{\bot})$ and every $\lambda \in \sigma(\Delta_G)$.
\end{Thm}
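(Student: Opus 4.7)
The plan is to start from the energy identity already used in the proof of Proposition~\ref{Pro:kernel}: for $\psi \in \cinfty(T; \mathcal{A}_\lambda^\bot)$ integration by parts (using skew-adjointness of $\vv{Y}_\ell$) gives
\begin{align*}
  \langle P\psi, \psi\rangle_{L^2(T\times G)} &= \|\dd_T \psi\|^2_{L^2(T\times G)} + \sum_{\ell = 1}^N \|\vv{Y}_\ell \psi\|^2_{L^2(T\times G)},
\end{align*}
so Cauchy--Schwarz $\|P\psi\|\|\psi\| \geq \langle P\psi, \psi\rangle$ reduces the theorem to a quantitative lower bound of the form $\langle P\psi, \psi\rangle \geq C(1+\lambda)^{-\rho'}\|\psi\|^2$ for some $\rho' > 0$.

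The first move is to replace $\vv{Y}_\ell = \mathfrak{a}_\ell(t, \vv{X}) + \vv{W}_\ell$ by its $G$-part. Since $\vv{W}_\ell$ is a vector field on $T$ with bounded coefficients one has $\|\vv{W}_\ell \psi\|^2 \leq C_\ast \|\dd_T\psi\|^2$, and an $\epsilon$-AM--GM applied to $\|\mathfrak{a}_\ell(t, \vv{X})\psi + \vv{W}_\ell \psi\|^2$ produces constants $c, K > 0$, independent of $\lambda$ and $\psi$, with
\begin{align*}
  \sum_{\ell} \|\vv{Y}_\ell \psi\|^2 + K \|\dd_T \psi\|^2 &\geq c \sum_{\ell} \|\mathfrak{a}_\ell(t, \vv{X}) \psi\|^2.
\end{align*}

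The heart of the argument extracts from $\sum_\ell \|\mathfrak{a}_\ell(t, \vv{X})\psi\|^2$ the content of the $\mathrm{(AGH)}$ assumption. I would orthogonally decompose $\psi = 1_T \otimes \bar{\psi} + \psi'$, where $\bar{\psi}(x) \dfn \int_T \psi(t, x) \, \dd V_T(t) \in \mathcal{A}_\lambda^\bot$ and $\psi' = \psi - 1_T \otimes \bar{\psi}$ has vanishing $t$-mean (so $\|\psi'\|^2 \leq c_0^{-1}\|\dd_T \psi\|^2$ by Poincar\'e on the connected manifold $T$). The crucial computation exploits that $\bar{\psi}$ is constant in $t$, allowing the inner product $\langle \vv{L}_p^\ell \bar{\psi}, \vv{L}_{p'}^\ell \bar{\psi}\rangle_{L^2(G)}$ to be pulled out of the $t$-integral:
\begin{align*}
  \int_T \|\mathfrak{a}_\ell(t) \bar{\psi}\|^2_{L^2(G)} \, \dd V_T &= \sum_{p, p' = 1}^{m^\ell} \bigg( \int_T \alpha_{\ell p} \alpha_{\ell p'} \, \dd V_T \bigg) \langle \vv{L}_p^\ell \bar{\psi}, \vv{L}_{p'}^\ell \bar{\psi}\rangle \geq \mu \sum_{p = 1}^{m^\ell} \|\vv{L}_p^\ell \bar{\psi}\|^2_{L^2(G)},
\end{align*}
the last step being positive-definiteness of the Gram matrix of the $L^2(T)$-linearly independent functions $\alpha_{\ell 1}, \ldots, \alpha_{\ell m^\ell}$. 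Combining this with the operator-norm bound $\|\mathfrak{a}_\ell(t)\|^2_{E_\lambda^G \to E_\lambda^G} \leq C(1+\lambda)$ to absorb the cross and fluctuation-squared terms involving $\psi'$, together with the $\mathrm{(AGH)}$ inequality $\sum_{\ell, p} \|\vv{L}_p^\ell \bar{\psi}\|^2 \geq C_1(1+\lambda)^{-2\rho}\|\bar{\psi}\|^2$ provided by Proposition~\ref{Prop:system_AGH-solv} and~\eqref{Eq:closed_range_systems_eq}, yields
\begin{align*}
  \sum_\ell \|\mathfrak{a}_\ell(t, \vv{X}) \psi\|^2 &\geq C_2 (1+\lambda)^{-2\rho} \|\bar{\psi}\|^2_{L^2(G)} - C_3(1+\lambda) \|\dd_T \psi\|^2.
\end{align*}
Chaining with the previous inequality, dividing through by $(1+\lambda)$, and reassembling via $\|\psi\|^2 = \|\bar{\psi}\|^2 + \|\psi'\|^2$ and Poincar\'e on $\|\psi'\|^2$ finally delivers $\langle P\psi, \psi\rangle \geq C(1+\lambda)^{-2\rho-1}\|\psi\|^2$. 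The main obstacle is precisely this $(1+\lambda)$-loss in the fluctuation error term: it is unavoidable because first-order operators on $E_\lambda^G$ have operator norm of order $(1+\lambda)^{1/2}$, and one must rescale carefully before combining bounds so that the coefficient of $\|\dd_T \psi\|^2$ in the final inequality stays positive, at the cost of one extra power of $(1+\lambda)$ in the final exponent.
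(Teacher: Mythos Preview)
Your argument is correct and, strikingly, takes a route that is both more elementary and more general than the paper's. The paper invokes the commutativity hypothesis~\eqref{it:thm15_hyp1} to simultaneously diagonalize $\vv{L}_1^\ell,\ldots,\vv{L}_{m^\ell}^\ell$ on each $E_\lambda^G$, expands $\psi$ in an $\ell$-dependent eigenbasis, and then combines a measure-theoretic lemma (adapted from~\cite{bfp17}) with the Poincar\'e-type inequality of~\cite[Proposition~6.2]{afr20} to transfer control from the ``good'' sets $A_i^{\lambda,\ell}$ to all of $T$. You bypass the diagonalization entirely via the mean--fluctuation splitting $\psi = 1_T\otimes\bar\psi + \psi'$: on the $t$-independent part the positive-definiteness of the Gram matrix $(\int_T\alpha_{\ell p}\alpha_{\ell p'})_{p,p'}$ --- which follows solely from the $\R$-linear independence of the $\alpha_{\ell p}$ --- delivers $\sum_p\|\vv{L}_p^\ell\bar\psi\|^2$ directly, and then the $\mathrm{(AGH)}$ inequality applies because $\bar\psi\in\mathcal{A}_\lambda^\bot$. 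The fluctuation $\psi'$ is controlled by the ordinary Poincar\'e inequality on $T$ and the first-order bound $\|\mathfrak{a}_\ell(t)\|_{E_\lambda^G\to E_\lambda^G}=\mathrm{O}((1+\lambda)^{1/2})$, at the expected cost of one power of $(1+\lambda)$ --- the same loss the paper incurs. The payoff of your approach is substantial: it nowhere uses that the $\vv{L}_p^\ell$ commute, so your proof of Theorem~\ref{Thm:LGHimpliesPGH} works without~\eqref{it:thm15_hyp1}, and since (per the paper's own Remark) this is the \emph{only} place that hypothesis enters, your argument in fact removes it from Theorem~\ref{Thm:main-P} altogether.
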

\begin{proof} Under our hypothesis~\eqref{it:thm15_hyp1}, for each $\ell \in \{1, \ldots, N\}$ the set $\mathcal{L}_\ell$~\eqref{eq:Lell} is an Abelian subalgebra of $\gr{g}$, with basis $\LL_1^\ell, \ldots, \LL_{m^\ell}^\ell$. Given $\lambda \in \sigma(\Delta_G)$, these vector fields act as a family of commuting, skew-symmetric -- hence normal -- linear endomorphisms of $E_\lambda^G$; all of them are zero on $\mathcal{A}_\lambda$ (by definition) and preserve $\mathcal{A}_\lambda^\bot$ (as we have seen in the proof of Proposition~\ref{Prop:invariance-cluster}). We can then simultaneously diagonalize them on $E_\lambda^G$ as follows: we select
  \begin{itemize}
  \item $\chi_1^{\lambda, \ell}, \ldots, \chi_{c_\lambda}^{\lambda, \ell}$ an orthonormal basis of $\mathcal{A}_\lambda$ (where $c_\lambda \dfn \dim \mathcal{A}_\lambda$ could be even zero) and
  \item $\varphi_{c_\lambda + 1}^{\lambda, \ell}, \ldots, \varphi_{d_\lambda^G}^{\lambda, \ell}$ an orthonormal basis of $\mathcal{A}_\lambda^\bot$ that are common eigenvectors to $\LL_1^\ell, \ldots, \LL_{m^\ell}^\ell$.
  \end{itemize}
  Their eigenvalues are purely imaginary i.e.
  \begin{align*} 
    \vv{L}_p^\ell \varphi_i^{\lambda, \ell} &= \sqrt{-1} \gamma_{i,p}^{\lambda, \ell}  \varphi_i^{\lambda, \ell}, \quad \gamma_{i,p}^{\lambda, \ell} \in \R, \ p \in \{1, \ldots, m^\ell\}, \ i \in \{c_\lambda + 1, \ldots, d^G_\lambda\}
  \end{align*}
  and also satisfy $\gamma_{i,p}^{\lambda, \ell} = \mathrm{O}(\sqrt{\lambda})$~\cite[Lemma~6.6]{afr20}. Applying $\mathfrak{a}_\ell(t)$ to $\varphi_i^{\lambda, \ell}$ we obtain
  \begin{align*}
    \mathfrak{a}_\ell(t) \varphi_i^{\lambda, \ell} = \sum_{p=1}^{m^\ell} \alpha_{\ell p}(t) \vv{L}^\ell_p \varphi_i^{\lambda, \ell} = \sqrt{-1} \sum_{p=1}^{m^\ell} \alpha_{\ell p}(t) \gamma_{i,p}^{\lambda, \ell} \varphi_i^{\lambda, \ell}.
  \end{align*}

  
  A general $\psi\in \cinfty(T; \mathcal{A}_\lambda^{\bot})$ is written as
  \begin{align*}
    \psi &= \sum_{i=c_\lambda+ 1}^{d^{G}_\lambda} \psi_{i}^\ell \otimes \varphi^{\lambda, \ell}_i. 
  \end{align*}
  If we define $D_\ell: T \times \R^{m^\ell} \rarr \R$ by
  \begin{align*}
    D_\ell (t, \gamma) &\dfn \Big( \sum_{p = 1}^{m^\ell} \alpha_{\ell p}(t) \gamma_p \Big)^2, \quad t \in T, \ \gamma \in \R^{m^\ell},
  \end{align*}
  we obtain
  \begin{align}\label{est_a_ell_aux}
    \| \mathfrak{a}_\ell(t) \psi(t)\|_{L^{2}(G)}^2= \sum_{i=c_\lambda+ 1}^{d^G_\lambda} |\psi^\ell_i(t)|^2 D_{\ell}(t, \gamma^{\lambda, \ell}_i)
  \end{align}
  where $\gamma^{\lambda, \ell}_i \in \R^{m^\ell}$ is defined in the obvious way. The fact that $\mathcal{L}$ is~$\mathrm{(AGH)}$ allows us to use~\eqref{eq:agh_system} to obtain
\begin{align*}
  \|\psi(t)\|^2_{L^2(G)} \leq C^{-2}(1 + \lambda)^{2\rho} \sum_{\ell=1}^N \sum_{p = 1}^{m^\ell} \|\vv{L}_p^\ell \psi(t) \|_{L^2(G)}^2 = C^{-2}(1 + \lambda)^{2\rho} \sum_{\ell=1}^N \sum_{p = 1}^{m^\ell}\sum_{i=c_\lambda+1}^{d^G_\lambda} |\psi_i^\ell(t)|^2|\gamma^{\lambda, \ell}_{i, p}|^2
\end{align*}
for each $t \in T$. Integrating both sides over $T$ yields
\begin{align}\label{eq:psiestimate}
  \|\psi\|^2_{L^{2}(T \times G)}\leq C^{-2}(1 + \lambda)^{2\rho} \sum_{\ell=1}^N \sum_{i=c_\lambda+1}^{d^G_\lambda} \int_T|\psi_i^\ell(t)|^2|\gamma^{\lambda, \ell}_{i}|^2 \dd V_T(t).
\end{align}

Adapting the proof of~\cite[Lemma~3.1]{bfp17} one checks that there are $\alpha, \delta>0$ such that for every $\ell \in \{1, \ldots, N\}$ and every $i \in \{c_\lambda + 1,\ldots, d_\lambda^G\}$ there is an open set $A_{i}^{\lambda, \ell} \subset T$ with $\mathrm{vol}(A_i^{\lambda, \ell}) \geq \delta$ and such that 
\begin{align*}
  \alpha|\gamma^{\lambda, \ell}_{i}|^2 &\leq D_\ell(t, \gamma^{\lambda, \ell}_{i}), \quad \forall t\in A_{i}^{\lambda, \ell}
\end{align*}
and by~\cite[Proposition~6.2]{afr20} -- a convenient substitute of~\cite[eqn.~(2.10)]{hp00} for arbitrary compact Riemannian manifolds -- we can find a constant $C_1>0$ depending only on $\delta$ such that
\begin{align*}
  \int_T|\psi_i^\ell(t)|^2|\gamma^{\lambda, \ell}_{i}|^2 \dd V_T(t)&\leq C_1\bigg(\int_{A_i^{\lambda, \ell}}|\psi_i^\ell(t)|^2|\gamma_{i}^{\lambda, \ell}|^2 \dd V_T(t)+ \| |\gamma_i^{\lambda, \ell}| \dd_T\psi_i^{\ell}\|^2_{L^2(T)}\bigg)\\
                                                                   &\leq C_1\bigg(\alpha^{-1}\int_{A_i^{\lambda, \ell}}|\psi_i^\ell(t)|^2D_\ell(t,\gamma^{\lambda, \ell}_{i}) \dd V_T(t)+ B_\ell \lambda\|  \dd_T\psi_i^{\ell}\|^2_{L^2(T)}\bigg),  
\end{align*}
where $B_{\ell}>0$ depends only on $\vv{L}_1^\ell,\ldots, \vv{L}_{m^\ell}^\ell$. Summing both sides over $i$ and $\ell$, using~\eqref{eq:psiestimate}, \eqref{est_a_ell_aux} and~\eqref{Pro:kernel-eq1} we obtain
\begin{align*}
  \|\psi\|^2_{L^{2}(T \times G)} &\leq C^{-2}(1 + \lambda)^{2\rho}\sum_{\ell=1}^N\sum_{i=c_\lambda+1}^{d^G_\lambda}\int_T|\psi_i^\ell(t)|^2|\gamma^{\lambda, \ell}_{i}|^2 \dd V_T(t) \\
  &\leq C_2(1 + \lambda)^{2\rho+1} \bigg( \sum_{\ell=1}^N \|\mathfrak{a}_\ell(t, \vv{X}) \psi\|_{L^2(T \times G)}^2+  \langle\Delta_T \psi, \psi\rangle_{L^2(T \times G)}\bigg)\\
  &\leq C_3(1 + \lambda)^{2\rho+1}\bigg(  \sum_{\ell=1}^N\|(\mathfrak{a}_\ell(t, \vv{X})+ \vv{W}_\ell) \psi\|_{L^2(T \times G)}^2+  \langle\Delta_T \psi, \psi\rangle_{L^2(T \times G)}\bigg) \\
  &= C_3(1+\lambda)^{2 \rho+1}\langle P\psi, \psi\rangle_{L^2(T \times G)}\\
  &\leq C_3(1+\lambda)^{2 \rho+1}\| P \psi\|_{L^2(T \times G)} \| \psi\|_{L^2(T \times G)}
\end{align*}
where we argue as in~\cite[pp.~23--24]{afr20} to incorporate $\vv{W}_\ell$ in the third inequality. This finishes the proof.
\end{proof}
\begin{Rem} Theorem~\ref{Thm:LGHimpliesPGH} is the only place where property~\eqref{it:thm15_hyp1} is used in our arguments.
\end{Rem}

For our next proposition we need to combine two estimates on the Fourier projections of a distribution to conclude its smoothness. First we recall~\cite[Corollary~2.10]{afr20}, whose statement we transcribe below.
\begin{Prop} \label{cor:partial_smoothness_converse} If $f \in \D'(T \times G)$ is such that
  \begin{enumerate}
  \item for every $s > 0$ there exists $C > 0$ such that
  \begin{align*}
    \| \mathcal{F}^G_\lambda(f) \|_{L^2(T \times G)} &\leq C (1 + \lambda)^{-s}, \quad \forall \lambda \in \sigma(\Delta_G) 
  \end{align*}
 holds and
  \item for every $s' > 0$ there exist $C' > 0$ and $\theta \in (0, 1)$ such that
    \begin{align*}
      \| \mathcal{F}^T_\mu \mathcal{F}^G_\lambda (f) \|_{L^2(T \times G)} &\leq C' (1 + \mu + \lambda)^{-s'}, \quad \forall (\mu, \lambda) \in \Lambda_\theta, 
    \end{align*}
    where
    \begin{align}
      \Lambda_\theta &\dfn \{ (\mu, \lambda) \in \sigma(\Delta_T) \times \sigma(\Delta_G) \st (1 + \lambda) \leq (1 + \mu)^\theta \}, \label{eq:Atheta}
    \end{align}
  \end{enumerate}
  then $f \in \cinfty(T \times G)$.
\end{Prop}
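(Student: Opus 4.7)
My plan is to invoke Corollary~\ref{cor:charac_smoothness_fourier_proj} by showing that every total Fourier coefficient of $f$ in the orthonormal basis $\mathcal{S}$ of Proposition~\ref{prop:relationship_LBs} decays faster than any polynomial in $1+\mu+\lambda$. Since $\mathcal{F}^T_\mu \mathcal{F}^G_\lambda(f) = \sum_{i,j} f_{ij}^{\mu\lambda}\, \psi_j^\mu \otimes \phi_i^\lambda$ by~\eqref{eq:total_from_partials} and the basis is orthonormal, each coefficient satisfies
\begin{align*}
  |f_{ij}^{\mu\lambda}| &\leq \|\mathcal{F}^T_\mu \mathcal{F}^G_\lambda(f)\|_{L^2(T \times G)}.
\end{align*}
Hence it suffices to prove that for every $s>0$ there exists $C_s>0$ such that
\begin{align*}
  \|\mathcal{F}^T_\mu \mathcal{F}^G_\lambda(f)\|_{L^2(T \times G)} &\leq C_s (1+\mu+\lambda)^{-s}
\end{align*}
for all $(\mu,\lambda) \in \sigma(\Delta_T) \times \sigma(\Delta_G)$.

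Fix a target $s>0$ and split the index set into $\Lambda_\theta$ and its complement. On $\Lambda_\theta$, hypothesis~(2) (applied with $s'\dfn s$) delivers the bound directly, and this determines the particular value of $\theta \in (0,1)$ I work with. For the complement, the failure of the inequality $(1+\lambda) \leq (1+\mu)^\theta$ yields $1 + \mu + \lambda \leq 2(1+\lambda)^{1/\theta}$, so it is enough to control $\|\mathcal{F}^T_\mu \mathcal{F}^G_\lambda(f)\|_{L^2}$ by a sufficiently negative power of $1+\lambda$. This I do by combining the pointwise bound $\|\mathcal{F}^T_\mu \mathcal{F}^G_\lambda(f)\|_{L^2(T \times G)} \leq \|\mathcal{F}^G_\lambda(f)\|_{L^2(T \times G)}$ -- which holds because the family $\{\mathcal{F}^T_\mu \mathcal{F}^G_\lambda(f)\}_\mu$ is orthogonal with sum $\mathcal{F}^G_\lambda(f)$ in $L^2(T \times G)$, the latter being meaningful thanks to hypothesis~(1) -- with hypothesis~(1) applied with decay exponent $s/\theta$ (or any larger one).

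The one subtlety is the quantifier ordering: hypothesis~(2) is allowed to pick $\theta$ depending on the desired $s'$, so I must first fix $\theta$ (and thereby $s'$) and only then select the exponent in~(1). Because hypothesis~(1) is available for \emph{every} $s>0$, this poses no obstacle. The remaining routine step is just absorbing the constants from the two regions into a single $C_s$. Once the uniform coefficient decay is established, Corollary~\ref{cor:charac_smoothness_fourier_proj} yields $f \in \cinfty(T \times G)$.
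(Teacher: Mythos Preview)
Your proposal is correct. The paper does not actually prove this proposition: it is transcribed verbatim from~\cite[Corollary~2.10]{afr20} and simply cited as a tool, so there is no in-paper argument to compare against. Your approach---splitting $\sigma(\Delta_T)\times\sigma(\Delta_G)$ into $\Lambda_\theta$ and its complement, using hypothesis~(2) on the former and the orthogonality bound $\|\mathcal{F}^T_\mu\mathcal{F}^G_\lambda(f)\|_{L^2}\leq\|\mathcal{F}^G_\lambda(f)\|_{L^2}$ together with hypothesis~(1) at exponent $s/\theta$ on the latter---is exactly the natural one, and your handling of the quantifier dependence (fix $s$, let~(2) pick $\theta$, then invoke~(1)) is correct.
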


\begin{Prop}\label{Pro:GHaperpimplyGSaperp} If $\mathcal{L}$ is $\mathrm{(AGH)}$ in $G$ then $P$ is $\mathrm{(GH)}_{\mathcal{A}^\bot}$. 
\end{Prop}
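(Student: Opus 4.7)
The plan is as follows. Fix $u \in \D'_{\mathcal A^\bot}(T \times G)$ with $f \dfn Pu \in \cinfty(T \times G)$; the goal is to conclude $u \in \cinfty_{\mathcal A^\bot}(T \times G)$, which by Corollary~\ref{cor:charac_smoothness_fourier_proj} reduces to showing that $\|\mathcal F_\mu^T \mathcal F_\lambda^G(u)\|_{L^2(T \times G)}$ decays faster than any polynomial in $(1+\mu+\lambda)$. As a preliminary, the ellipticity of $\tilde P$ (through~\cite[Corollary~4.2]{afr20}) already gives $\mathcal F_\lambda^G(u) \in \cinfty(T; E_\lambda^G)$, and since $u \in \D'_{\mathcal A^\bot}(T \times G)$ we actually have $\mathcal F_\lambda^G(u) \in \cinfty(T; \mathcal A_\lambda^\bot)$. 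Therefore $\mathcal F_\lambda^G(u)$ is an admissible test function in Theorem~\ref{Thm:LGHimpliesPGH}, and using $P \mathcal F_\lambda^G = \mathcal F_\lambda^G P$ the a priori inequality yields
\begin{align*}
  \| \mathcal F_\lambda^G(u) \|_{L^2(T \times G)} \leq C (1+\lambda)^\rho \| \mathcal F_\lambda^G(f) \|_{L^2(T \times G)},
\end{align*}
so by Proposition~\ref{prop:charac_smoothness_fourier_proj} applied to $f$ the left-hand side has superpolynomial decay in $\lambda$.

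The main step consists of upgrading this $L^2$-decay to decay in every mixed Sobolev norm $\sob^{k,0}(T\times G)$. I write $P = \tilde P + R$ with $R \dfn -\sum_\ell (\mathfrak a_\ell^2 + \mathfrak a_\ell \vv W_\ell + \vv W_\ell \mathfrak a_\ell)$ and exploit that each $\mathfrak a_\ell$ acts on $E_\lambda^G$ with operator norm of order $\sqrt\lambda$ (from $\sum_j \|\vv X_j \phi\|_{L^2(G)}^2 \leq C \lambda \|\phi\|_{L^2(G)}^2$ on $E_\lambda^G$) while $R$ is at most first-order in $t$, so that
\begin{align*}
  \| R \mathcal F_\lambda^G(u) \|_{\sob^{k,0}(T \times G)} \leq C (1+\lambda) \| \mathcal F_\lambda^G(u) \|_{\sob^{k+1,0}(T \times G)}.
\end{align*}
Combining the identity $\tilde P \mathcal F_\lambda^G(u) = \mathcal F_\lambda^G(f) - R \mathcal F_\lambda^G(u)$ with the standard elliptic estimate for $\tilde P$ on $T$, and using an interpolation/absorption argument on the intermediate norm $\sob^{k+1,0}$, produces a recursive bound of the form
\begin{align*}
  \| \mathcal F_\lambda^G(u) \|_{\sob^{k+2,0}(T \times G)} \leq C \bigl( (1+\lambda)^2 \| \mathcal F_\lambda^G(u) \|_{\sob^{k,0}(T \times G)} + \| \mathcal F_\lambda^G(f) \|_{\sob^{k,0}(T \times G)} + \| \mathcal F_\lambda^G(u) \|_{L^2(T \times G)} \bigr).
\end{align*}
Since $\| \mathcal F_\lambda^G(f) \|_{\sob^{k,0}(T\times G)}$ has rapid decay in $\lambda$ for every $k$ (a direct consequence of Corollary~\ref{cor:charac_smoothness_fourier_proj} applied to $f$), and $\| \mathcal F_\lambda^G(u) \|_{L^2(T \times G)}$ was just shown to have rapid decay, iteration from $k = 0$ upwards shows that $\| \mathcal F_\lambda^G(u) \|_{\sob^{2K,0}(T \times G)}$ has rapid decay in $\lambda$ for every $K \in \Z_+$: the polynomial factors $(1+\lambda)^{2K}$ accumulated along the induction are absorbed by the superpolynomial decay coming from the first step.

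To finish, the very definition of the mixed norm gives $(1+\mu)^{2K} \| \mathcal F_\mu^T \mathcal F_\lambda^G(u) \|_{L^2(T\times G)}^2 \leq \| \mathcal F_\lambda^G(u) \|_{\sob^{K,0}(T\times G)}^2$ for every $\mu$, so the previous bound produces $\| \mathcal F_\mu^T \mathcal F_\lambda^G(u) \|_{L^2(T\times G)} \leq C_{K,s} (1+\mu)^{-K} (1+\lambda)^{-s}$ for all $K, s \in \Z_+$; taking $K = s$ and using $(1+\mu)(1+\lambda) \geq 1 + \mu + \lambda$ delivers the rapid decay required by Corollary~\ref{cor:charac_smoothness_fourier_proj}, whence $u \in \cinfty(T \times G)$, and since $u \in \D'_{\mathcal A^\bot}(T \times G)$ we conclude $u \in \cinfty_{\mathcal A^\bot}(T \times G)$. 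The main obstacle will be the bootstrap in the second step: one has to control the polynomial-in-$\lambda$ factor contributed by $R$ carefully enough that, after each iteration of the elliptic estimate for $\tilde P$, the superpolynomial decay supplied by Theorem~\ref{Thm:LGHimpliesPGH} is not destroyed.
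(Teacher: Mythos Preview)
Your argument is correct, but it follows a different route from the paper's. Both proofs begin identically: use Theorem~\ref{Thm:LGHimpliesPGH} on $\psi = \mathcal F_\lambda^G(u) \in \cinfty(T;\mathcal A_\lambda^\bot)$ (smooth by~\cite[Corollary~4.2]{afr20}) together with $P\mathcal F_\lambda^G = \mathcal F_\lambda^G P$ to obtain superpolynomial decay of $\|\mathcal F_\lambda^G(u)\|_{L^2(T\times G)}$ in $\lambda$. From this point on the paper simply invokes two black boxes: \cite[Corollary~4.4]{afr20}, which (from the ellipticity of $\tilde P$) supplies the estimate $\|\mathcal F_\mu^T\mathcal F_\lambda^G(u)\|_{L^2}\le C_s(1+\mu+\lambda)^{-s}$ on the region $\Lambda_\theta$ where $\mu$ dominates $\lambda$; and Proposition~\ref{cor:partial_smoothness_converse}, which patches this together with the rapid $\lambda$-decay to conclude $u\in\cinfty(T\times G)$. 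No bootstrap is written out.

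Your approach trades these citations for an explicit elliptic iteration: decompose $P=\tilde P+R$, bound $\|R\mathcal F_\lambda^G(u)\|_{\sob^{k,0}}\le C(1+\lambda)\|\mathcal F_\lambda^G(u)\|_{\sob^{k+1,0}}$, apply the $T$-elliptic estimate for $\tilde P$, absorb the intermediate norm by interpolation, and iterate in $k$. This is sound; the interpolation and elliptic constants are genuinely $\lambda$-independent (they hold componentwise for $\psi=\sum_i\psi_i\otimes\phi_i^\lambda$), and the polynomial losses in $\lambda$ along the iteration are harmless against the superpolynomial decay established at step one. The payoff is a self-contained proof that bypasses Proposition~\ref{cor:partial_smoothness_converse} and \cite[Corollary~4.4]{afr20} entirely and lands directly on Corollary~\ref{cor:charac_smoothness_fourier_proj}; the cost is that you are essentially reproving, in this special situation, what those two results already encapsulate.
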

\begin{proof} Let us consider $u\in \D'_{\mathcal{A}^{\bot}}(T \times G)$ such that $P u\in \cinfty_{\mathcal{A}^{\bot}}(T \times G)$.
  Since $P u\in \cinfty(T \times G)$ and $\tilde{P}$ is elliptic we have, thanks to~\cite[Corollary~4.2]{afr20}, that $\mathcal{F}^G_\lambda(u)$ is smooth -- hence actually belongs to $\cinfty(T; \mathcal{A}^{\bot}_\lambda)$ -- for every $\lambda\in \sigma(\Delta_G)$. Moreover, from~\cite[Corollary~4.4]{afr20} we have that for every $s>0$ there are $C_s>0$ and $\theta\in (0,1)$ such that
  \begin{align*}
    \|\mathcal{F}_\mu^T \mathcal{F}^G_\lambda(u)\|_{L^2(T \times G)} &\leq C_s(1+ \mu +\lambda)^{-s}
  \end{align*}
  for every $(\mu, \lambda)\in \Lambda_\theta$.  It follows from Theorem~\ref{Thm:LGHimpliesPGH} applied to $\psi = \mathcal{F}^G_\lambda(u)$ that
  \begin{align*}
    \|\mathcal{F}^G_\lambda(u)\|_{L^2(T \times G)} &\leq C^{-1}(1+\lambda)^{\rho} \|\mathcal{F}^G_\lambda(Pu)\|_{L^2(T \times G)}.
  \end{align*}
  Now~\cite[Corollary 2.9]{afr20} ensures that for every $s>0$ there exists $C_s'>0$ such that
  \begin{align*}
    \|\mathcal{F}^G_\lambda(Pu)\|_{L^2(T \times G)} &\leq C_s'(1+\lambda)^{-s-\rho}
  \end{align*}
  for every $\lambda\in \sigma(\Delta_G)$ and therefore
  \begin{align*}
    \|\mathcal{F}^G_\lambda(u)\|_{L^2(T \times G)} &\leq C^{-1}C_s'(1+\lambda)^{-s}.
  \end{align*}
  It follows from Proposition~\ref{cor:partial_smoothness_converse} that $u$ is smooth.   
\end{proof}

Before we can finally conclude the proof of Theorem~\ref{Thm:main-P} let us recall an application of a classical result on elliptic operators. Consider an elliptic LPDO $Q$ of order $m_0 \geq 1$ on $T$. It is well known (see e.g.~\cite[Theorem~6.29]{warner_manifolds}) that given $k\in\Z_+$ there exists $C >0$ such that
\begin{align} \label{elip_est_1}
  \|v\|_{\sob^{k+m_0}(T)} &\leq C (\|Q v\|_{\sob^k(T)}+\|v\|_{\sob^k(T)}), \quad \forall v \in \sob^{k+m_0}(T).
\end{align}
We can obtain a more precise inequality in the $L^2$-orthogonal of $L^2(T) \cap \ker Q$ -- the latter is a closed subspace of $L^2(T)$.
\begin{Lem} \label{lemwarner} \label{elip_est} Given $k\in\Z_+$ there exists $C >0$ such that
  \begin{align} \label{elip_est_2}
    \|v\|_{\sob^{k+m_0}(T)} &\leq C \|Qv \|_{\sob^k(T)}, \quad \forall v \in \left(L^2(T) \cap \ker Q\right)^\bot \cap \sob^{k+m_0}(T).
  \end{align}
\end{Lem}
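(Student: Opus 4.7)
The strategy is the standard contradiction-and-compactness argument that promotes the Gårding-type inequality~\eqref{elip_est_1} to the sharper~\eqref{elip_est_2} once one quotients out the kernel of $Q$. Fix $k \in \Z_+$ and suppose, for the sake of contradiction, that no constant $C$ satisfies~\eqref{elip_est_2}. Then there exists a sequence $\{v_\nu\}_{\nu \in \N}$ in $(L^2(T) \cap \ker Q)^\bot \cap \sob^{k + m_0}(T)$ with
\begin{align*}
  \| v_\nu \|_{\sob^{k + m_0}(T)} &= 1, \quad \| Q v_\nu \|_{\sob^k(T)} < 1/\nu.
\end{align*}

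The next step is to extract a limit. Since the inclusion $\sob^{k + m_0}(T) \hookrightarrow \sob^k(T)$ is compact (as stated in Section~\ref{sec:prel}, after suitable iteration if $m_0 > 1$), we may pass to a subsequence and assume $v_\nu \to v$ in $\sob^k(T)$ for some $v \in \sob^k(T) \subset L^2(T)$. Applying~\eqref{elip_est_1} to $v_\nu$ and using $\| v_\nu \|_{\sob^{k + m_0}(T)} = 1$ together with $\| Q v_\nu \|_{\sob^k(T)} \to 0$, we obtain $\liminf_\nu \| v_\nu \|_{\sob^k(T)} \geq 1/C > 0$, hence $\| v \|_{\sob^k(T)} \geq 1/C$ and, in particular, $v \neq 0$.

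Now I would verify that $v$ lies in $L^2(T) \cap \ker Q$. Since $Q: \sob^k(T) \to \sob^{k - m_0}(T)$ is continuous, $v_\nu \to v$ in $\sob^k(T)$ yields $Q v_\nu \to Qv$ in $\sob^{k - m_0}(T)$; but also $Q v_\nu \to 0$ in $\sob^k(T) \hookrightarrow \sob^{k - m_0}(T)$, so $Qv = 0$. By elliptic regularity $v \in \cinfty(T) \subset L^2(T)$, and thus $v \in L^2(T) \cap \ker Q$. On the other hand, since $v_\nu \to v$ in $L^2(T)$ and each $v_\nu$ lies in the closed subspace $(L^2(T) \cap \ker Q)^\bot$, the limit $v$ belongs to $(L^2(T) \cap \ker Q)^\bot$ as well. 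Combining the two conclusions, $v = 0$, contradicting $\| v \|_{\sob^k(T)} \geq 1/C$.

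There is no genuine obstacle here: everything reduces to the elementary interplay between compact Sobolev embeddings, continuity of $Q$ on Sobolev scales, and elliptic regularity for $Qv = 0$. The only point worth being careful about is ensuring that the hypothesis $v_\nu \in (L^2(T) \cap \ker Q)^\bot$ survives the $L^2$-limit, which is immediate since this orthogonal complement is closed in $L^2(T)$.
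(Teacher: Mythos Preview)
Your proof is correct and follows essentially the same contradiction-and-compactness argument as the paper: normalize in $\sob^{k+m_0}$, pass to a subsequence converging in $\sob^k$ by Rellich, show the limit lies in both $\ker Q$ and its $L^2$-orthogonal, hence vanishes, and then use~\eqref{elip_est_1} to reach a contradiction. The only cosmetic differences are that the paper invokes continuity of $Q$ in $\D'(T)$ rather than between Sobolev spaces, and derives the contradiction at the end from~\eqref{elip_est_1} rather than first isolating $v\neq 0$; your appeal to elliptic regularity to place $v$ in $L^2(T)$ is harmless but unnecessary, since $v\in\sob^k(T)\subset L^2(T)$ already.
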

\begin{proof} If~\eqref{elip_est_2} does not hold then we can find a sequence $\{v_\nu \}_{\nu \in \N} \sset (L^2(T) \cap \ker Q)^\bot \cap \sob^{k+m_0}(T)$ such that $\|v_\nu\|_{\sob^{k+m_0}(T)} = 1$ for every positive integer $\nu$ and
  \begin{align} \label{elip_est_3}
    \|Q v_\nu \|_{\sob^k(T)} &\leq \frac{1}{\nu}. 
  \end{align}
  By passing to a subsequence if necessary we can also assume, by Rellich's Lemma, that there exists $v \in \sob^k(T)$ such that $v_\nu \to v$ in $\sob^k(T)$. In particular,  it follows that $v_\nu \to v$ in $L^2(T)$ and since $v_\nu \in (L^2(T) \cap \ker Q)^\bot$, which is a closed subspace of $L^2(T)$, we have that $v \in (L^2(T) \cap \ker Q)^\bot$. Now we have, from~\eqref{elip_est_3}, that $Qv_\nu \to 0$ in $\D'(T)$ and since $v_\nu \to v$ in $\D'(T)$ it follows from the continuity of $Q$ in $\D'(T)$ that $Qv = 0$, so $v \in L^2(T) \cap \ker Q$. Hence $v=0$ and~\eqref{elip_est_1}-\eqref{elip_est_3} give
  \begin{align*}
    1 \leq C (\|Qv_\nu\|_{\sob^k(T)} + \|v_\nu\|_{\sob^k(T)}) \longrightarrow 0 \quad \text{when $\nu \to \infty$}.           
  \end{align*}
\end{proof}

\begin{Thm} If $P$ is $(\mathrm{GH})_{\mathcal{A}^\bot}$ then $P$ is~$\mathrm{(AGH)}$ in $T \times G$.
\end{Thm}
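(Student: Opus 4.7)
The plan is to reduce the problem to the hypothesis on $\mathcal{A}^\bot$ plus an elliptic solvability argument on $\mathcal{A}$. Let $u \in \D'(T \times G)$ with $Pu \in \cinfty(T \times G)$. Since $\mathcal{A}$ is invariant under $P$ (Proposition~\ref{Prop:invariance-cluster}), the projections commute with $P$, so the decomposition
\begin{align*}
  u = \pi_{\mathcal{A}^\bot}(u) + \pi_{\mathcal{A}}(u), \qquad Pu = \pi_{\mathcal{A}^\bot}(Pu) + \pi_{\mathcal{A}}(Pu),
\end{align*}
splits the problem into two independent pieces, each sitting in $\D'_{\mathcal{A}^\bot}$ resp.~$\D'_{\mathcal{A}}$ with smooth image under $P$. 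The first piece is handled immediately by hypothesis: since $P \pi_{\mathcal{A}^\bot}(u) = \pi_{\mathcal{A}^\bot}(Pu) \in \cinfty_{\mathcal{A}^\bot}(T \times G)$ and $P$ is $(\mathrm{GH})_{\mathcal{A}^\bot}$, we get $v_1 \dfn \pi_{\mathcal{A}^\bot}(u) \in \cinfty_{\mathcal{A}^\bot}(T \times G)$.

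For the $\mathcal A$-component, the key observation is Remark~\ref{rmk_P_tildeP_cluster}: on $\D'_{\mathcal{A}}(T \times G)$ the operator $P$ collapses to the elliptic $\tilde{P}$. Thus I need to produce $v_2 \in \cinfty_{\mathcal{A}}(T \times G)$ with $\tilde{P} v_2 = \pi_{\mathcal{A}}(Pu) \dfn f$. Expanding in an adapted basis,
\begin{align*}
  f = \sum_{\lambda \in \sigma(\Delta_G)} \sum_{i=1}^{c_\lambda} f_i^\lambda \otimes \chi_i^\lambda, \qquad f_i^\lambda \in \cinfty(T),
\end{align*}
the equation decouples into the scalar problems $\tilde{P} g_i^\lambda = f_i^\lambda$ on $T$. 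Each $f_i^\lambda$ lies in the $L^2(T)$-orthogonal of $K_T \dfn L^2(T) \cap \ker\tilde{P}$: indeed, writing $\pi_{\mathcal{A}}(u) = \sum_{i,\lambda} \eta_i^\lambda \otimes \chi_i^\lambda$ with $\eta_i^\lambda \in \D'(T)$, we have $f_i^\lambda = \tilde{P} \eta_i^\lambda$, so for any $w \in K_T$ (which consists of smooth functions by ellipticity of $\tilde P$ and $\bar w \in K_T$ since $\tilde P$ is real and self-adjoint) one gets $\langle f_i^\lambda, \bar{w}\rangle = \langle \eta_i^\lambda, \tilde{P}\bar{w}\rangle = 0$. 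Hence Lemma~\ref{lemwarner} delivers a unique $g_i^\lambda \in K_T^\bot \cap \cinfty(T)$ with $\tilde{P} g_i^\lambda = f_i^\lambda$, and the quantitative estimate
\begin{align*}
  \|g_i^\lambda\|_{\sob^{j + 2}(T)} \leq C_j \|f_i^\lambda\|_{\sob^j(T)}, \qquad \forall j \in \Z_+,
\end{align*}
\emph{with $C_j$ independent of $(i, \lambda)$}. This is the crucial quantitative input.

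Having selected the coefficients, set $v_2 \dfn \sum_{\lambda,i} g_i^\lambda \otimes \chi_i^\lambda$. The $L^2$-orthogonality of the basis $\{\chi_i^\lambda\}$ and the fact that $\Delta_G$ acts as $\lambda$ on $\chi_i^\lambda$ give
\begin{align*}
  \|v_2\|_{\sob^{j + 2, k}(T \times G)}^2 = \sum_{\lambda} (1 + \lambda)^{2k} \sum_{i=1}^{c_\lambda} \|g_i^\lambda\|_{\sob^{j + 2}(T)}^2 \leq C_j^2 \sum_{\lambda} (1 + \lambda)^{2k} \sum_{i=1}^{c_\lambda} \|f_i^\lambda\|_{\sob^{j}(T)}^2 = C_j^2 \|f\|_{\sob^{j, k}(T \times G)}^2,
\end{align*}
which is finite for all $j, k \in \Z_+$ because $f \in \cinfty_{\mathcal{A}}(T \times G)$. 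By Lemma~\ref{lem:topology_mixed} this places $v_2$ in $\cinfty(T \times G)$, and clearly $v_2 \in \D'_{\mathcal{A}}(T \times G)$, so $v_2 \in \cinfty_{\mathcal{A}}(T \times G)$.

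Setting $v \dfn v_1 + v_2 \in \cinfty(T \times G)$ one verifies, using Remark~\ref{rmk_P_tildeP_cluster} to identify $Pv_2 = \tilde{P} v_2 = f$, that
\begin{align*}
  Pv = P v_1 + \tilde{P} v_2 = \pi_{\mathcal{A}^\bot}(Pu) + \pi_{\mathcal{A}}(Pu) = Pu,
\end{align*}
which is~\eqref{eq:AGH}. The main obstacle is precisely the uniform-in-$(i,\lambda)$ quantitative step via Lemma~\ref{lemwarner}: without the constant $C_j$ being independent of the $G$-frequency $\lambda$, one could not convert the pointwise smoothness of each $g_i^\lambda$ (which is immediate from ellipticity) into global smoothness of the assembled function $v_2$ on $T \times G$.
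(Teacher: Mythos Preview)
Your proof is correct and follows essentially the same strategy as the paper's: split via the invariant cluster, use the $(\mathrm{GH})_{\mathcal{A}^\bot}$ hypothesis on the orthogonal piece, and on the $\mathcal{A}$-piece exploit $P=\tilde P$ together with the uniform elliptic estimate of Lemma~\ref{lemwarner} to assemble a smooth preimage. One small imprecision: Lemma~\ref{lemwarner} only supplies the a priori estimate, not existence of $g_i^\lambda$; existence comes from elliptic regularity (your $\eta_i^\lambda$ is already smooth since $\tilde P\eta_i^\lambda=f_i^\lambda\in\cinfty(T)$) followed by orthogonal projection onto $K_T^\bot$, which is exactly how the paper constructs its $v_i^\lambda$---your final smoothness check via mixed Sobolev norms is a clean alternative to the paper's Fourier-coefficient argument through Corollary~\ref{cor:charac_smoothness_fourier_proj}.
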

\begin{proof} Consider $u\in \D'(T\times G)$ such that  $f \dfn Pu\in \cinfty(T \times G)$: by Proposition~\ref{Prop:invariance-cluster} we are allowed to write 
  \begin{align*}
    P \pi_{\mathcal{A}} (u) = \pi_{\mathcal{A}} (f) \in \cinfty_{\mathcal{A}}(T \times G), &\quad P \pi_{\mathcal{A}^{\bot}} (u) = \pi_{\mathcal{A}^{\bot}} (f) \in \cinfty_{\mathcal{A}^{\bot}}(T \times G)
  \end{align*}
so by assumption we have $\pi_{\mathcal{A}^{\bot}} (u) \in \cinfty_{\mathcal{A}^{\bot}}(T \times G)$, thus reducing our problem to find $v \in \cinfty(T \times G)$ such that $Pv = P \pi_{\mathcal{A}} (u) = \pi_{\mathcal{A}}(f)$.

Expanding the partial Fourier coefficients of $u$ in terms of orthonormal bases adapted to $\mathcal{A}, \mathcal{A}^\bot$ as we did in Section~\ref{sec:cluster_P} we have
\begin{align*}
  \mathcal{F}_\lambda^G(u) = \sum_{i = 1}^{c_\lambda} \mathcal{F}_\lambda^G(u)_i \otimes \chi_i^\lambda + \sum_{i = c_\lambda + 1}^{d_\lambda^G} \mathcal{F}_\lambda^G(u)_i \otimes \varphi_i^\lambda = \mathcal{F}^{G,\mathcal{A}}_\lambda(u) + \mathcal{F}^{G,\mathcal{A}^\bot}_\lambda(u)
\end{align*}
and recalling that $P = \tilde{P}$ on $\D'_{\mathcal{A}}(T \times G)$ (see~\eqref{eq:P_tildeP_cluster}) we have that
\begin{align*}
  \sum_{i=1}^{c_\lambda} \left(\tilde P \mathcal F_\lambda^G(u)_i \right) \otimes \chi^\lambda_i = \tilde{P} \mathcal{F}_\lambda^{G,\mathcal{A}} (u)&=  P \mathcal{F}_\lambda^{G,\mathcal{A}} (u)= \mathcal{F}^{G,\mathcal{A}}_\lambda(f)= \sum_{i=1}^{c_\lambda} \mathcal F_\lambda^G(f)_i \otimes \chi^\lambda_i, 
\end{align*}
from which we obtain, by uniqueness of the coefficients, that
\begin{align*}
  \tilde P \mathcal F_\lambda^G(u)_i &= \mathcal F_\lambda^G(f)_i \in \cinfty(T), \quad \forall i \in \{1,\ldots,c_\lambda\}.
\end{align*}
Since $\tilde{P}$ is elliptic it follows that $\mathcal F_\lambda^G(u)_i \in \cinfty(T)$ for every $i \in \{1, \ldots, c_\lambda\}$. Also, $\ker \tilde{P}$ is closed in $L^2(T)$, thus inducing a decomposition into orthogonal direct sum
\begin{align*}
  \mathcal F_\lambda^G(u)_i = w_i^\lambda+v_i^\lambda, \quad \text{where $w_i^\lambda \in \ker \tilde P$ and $v_i^\lambda \in (\ker \tilde P)^\bot$}
\end{align*}
where $v^\lambda_i$ is smooth and satisfies
\begin{align*}
  \tilde{P} v^\lambda_i &= \mathcal F_\lambda^G(f)_i, \quad \forall i \in \{1,\ldots,c_\lambda\}.
\end{align*}
We claim that
\begin{align*}
  v &\dfn \sum_{\alpha \in \sigma(\Delta_M)} \sum_{\mu + \lambda = \alpha} \sum_{j = 1}^{d_\mu^T} \sum_{i = 1}^{c_\lambda} \langle v^\lambda_i, \psi_j^\mu \rangle_{L^2(T)} \psi_j^\mu \otimes \chi_i^\lambda \quad \left( = \sum_{\lambda \in \sigma(\Delta_G)} \sum_{i = 1}^{c_\lambda} v^\lambda_i \otimes \chi_i^\lambda \right)
\end{align*}
converges in $\cinfty(T \times G)$, in which case we certainly have $v \in \cinfty_{\mathcal{A}}(T \times G)$ hence
\begin{align*}
  \mathcal{F}_\lambda^G(P v) &= \tilde{P} \mathcal{F}_\lambda^G (v) \\
  &= \tilde{P} \sum_{\mu \in \sigma(\Delta_T)} \sum_{j = 1}^{d_\mu^T} \sum_{i = 1}^{c_\lambda} \langle v^\lambda_i, \psi_j^\mu \rangle_{L^2(T)} \psi_j^\mu \otimes \chi_i^\lambda \\
  &= \tilde{P} \sum_{i = 1}^{c_\lambda} v^\lambda_i \otimes \chi_i^\lambda \\
  &= \sum_{i = 1}^{c_\lambda} (\tilde{P} v^\lambda_i) \otimes \chi_i^\lambda \\
  &= \sum_{i = 1}^{c_\lambda}  \mathcal F_\lambda^G(f)_i \otimes \chi_i^\lambda \\
  &= \mathcal{F}_\lambda^G(\pi_{\mathcal{A}}(f))
\end{align*}
for every $\lambda \in \sigma(\Delta_G)$, and therefore $Pv = \pi_{\mathcal{A}}(f)$ -- thus concluding our proof. In order to prove the claim, we appeal to Corollary~\ref{cor:charac_smoothness_fourier_proj}: thanks to Lemma~\ref{lemwarner}, for each $k \in \Z_+$ there exists $C_k > 0$ such that
\begin{align*}
  \|v^\lambda_i\|_{\sob^{k + 2}(T)} &\leq C_k \| \mathcal F_\lambda^G(f)_i \|_{\sob^k(T)}, \quad \forall i \in \{1,\ldots,c_\lambda\}, \ \lambda \in \sigma(\Delta_G)
\end{align*}
where
\begin{align*}
  \|v^\lambda_i\|_{\sob^{k + 2}(T)}^2 &= \sum_{\mu \in \sigma(\Delta_T)} (1 + \mu)^{2k + 4} \sum_{j = 1}^{d_\mu^T} | \langle v_i^\lambda, \psi_j^\mu \rangle_{L^2(T)}|^2
\end{align*}
and
\begin{align*}
  \| \mathcal F_\lambda^G(f)_i \|_{\sob^k(T)}^2 &= \sum_{\mu \in \sigma(\Delta_T)} (1 + \mu)^{2k} \sum_{j = 1}^{d_\mu^T} \left| \left \langle \mathcal{F}_\lambda^G(f)_i, \psi_j^\mu \right \rangle_{L^2(T)} \right|^2 \\
  &= \sum_{\mu \in \sigma(\Delta_T)} (1 + \mu)^{2k} \sum_{j = 1}^{d_\mu^T} | \langle f, \psi_j^\mu \otimes \chi_i^\lambda \rangle_{L^2(T \times G)} |^2
\end{align*}
and since $f$ is smooth we have by Corollary~\ref{cor:charac_smoothness_fourier_proj} that there exists $C_k' > 0$ such that
\begin{align*}
  | \langle f, \psi_j^\mu \otimes \chi_i^\lambda \rangle_{L^2(T \times G)} | &\leq C_k' (1 + \mu + \lambda)^{-2k -n}, \quad \forall i,j,\mu, \lambda
\end{align*}
hence
\begin{align*}
  \| \mathcal F_\lambda^G(f)_i \|_{\sob^k(T)}^2 \leq (C'_k)^2 (1 + \lambda)^{-2k} \sum_{\mu \in \sigma(\Delta_T)} d_\mu^T(1 + \mu)^{-2n} \leq (C''_k)^2 (1 + \lambda)^{-2s}
\end{align*}
thanks to Weyl's asymptotic formula. Therefore
\begin{align*}
 | \langle v^\lambda_i, \psi_j^\mu \rangle_{L^2(T)} | \leq (1 + \mu)^{-k - 2} \| v^\lambda_i\|_{\sob^{k + 2}(T)} \leq C_k C''_k (1 + \mu)^{-k - 2} (1 + \lambda)^{-k} \leq C_k C''_k (1 + \mu + \lambda)^{-k}
\end{align*}
for every $i,j,\mu, \lambda$, so by Corollary~\ref{cor:charac_smoothness_fourier_proj} we conclude that $v \in \cinfty(T \times G)$.

\end{proof}

\section{Example: characterization of globally solvable operators on torus} \label{sec:examples}

Notwithstanding the applicability of our results to general compact Lie groups, on which one may hope to find many families of~$\mathrm{(AGH)}$ vector fields satisfying the hypotheses of Theorem~\ref{Thm:main-P} (even single ones, as we have seen in Example~\ref{exa:su2}), the special case $G= \mathbb{T}^m$ is of particular interest. While in this case hypothesis~\eqref{it:thm15_hyp1} is clearly superfluous, historically this kind of problem is studied on tori: in the work~\cite{petronilho02}, which inspired the questions we investigated in the present paper, the author analyzes global solvability of the following differential operator defined in $\mathbb T_t^n\times \mathbb T_x^m$
\begin{align*}
  P &= -\sum_{k = 1}^n \del_{t_k}^2 - \bigg(\sum_{j=1}^m a_j(t)\partial_{x_j}\bigg)^2  
\end{align*}
where $a_1,\ldots,a_m \in \cinfty(\mathbb T^n;\R)$, and then characterizes such a property in terms of Diophantine conditions.

Let us consider the more general operator
\begin{align}
  P &\dfn \Delta_T - \sum_{\ell = 1}^N \Big( \sum_{j = 1}^m a_{\ell j}(t) \del_{x_j} + \vv{W}_\ell \Big)^2 \label{eq:Pdef_torus}
\end{align}
defined in $T \times \mathbb{T}^m$ where $T$ will remain a compact Riemannian manifold enjoying our usual assumptions. That is, we are going to specialize our discussion of our general operator~\eqref{eq:Pdef} in the case when $G$ is the $m$-dimensional torus. We recall that $\sigma(\Delta_{\mathbb{T}^m})=\{ n^2 \st n\in \Z_+\}$ and an orthonormal basis for $E^{\mathbb{T}^m}_{n^2}$ is given by the exponentials $x \in \mathbb{T}^m \mapsto e^{i x\xi} \in \C$ for $\xi\in \Z^m$ with $|\xi|^2= n^2$.

Below, for each $\ell \in \{1, \ldots, N\}$ we will assume for simplicity that $m^\ell \dfn \dim \mathcal{L}_\ell \in \{1, \ldots, m - 1\}$: the extreme cases $m^\ell = 0$ and $m^\ell = m$ -- which correspond, as we will see, to all the coefficients $a_{\ell 1}, \ldots, a_{\ell m}$ being zero, or linearly independent, respectively -- are treated analogously provided one interprets the notation carefully. A basis for $\mathcal{L}_\ell$ is given by
\begin{align*}
  \mathrm{L}_p^\ell &\dfn \partial_{x_{j_p^\ell}} + \sum_{q=1}^{d^\ell} \lambda^\ell_{qp} \partial_{x_{i^\ell_q}}, \quad p \in \{1,\ldots,m^\ell\},
\end{align*}
where the coefficients $\lambda_{qp}^\ell \in \R$ are obtained as follows: let $a_{\ell j_1^\ell}, \ldots, a_{\ell j_{m^\ell}^\ell}$ be a basis of $\Span_\R \{a_{\ell 1}, \ldots, a_{\ell m}\}$ and write, for the remaining indices $1 \leq i_1^\ell < \cdots < i_{d^\ell}^\ell \leq m$ -- where $d^\ell \dfn m - m^\ell$:
\begin{align*}
  a_{\ell i_q^\ell} &= \sum_{p = 1}^{m^\ell} \lambda_{qp}^\ell a_{\ell j_p^\ell}, \quad q \in \{1, \ldots, d^\ell \}.
\end{align*}
Our spectral cluster~\eqref{spectralclusterqueusamos} adapted to $P$ is then given by
\begin{align*}
  \mathcal{A}_{n^2} &= E^{\mathbb{T}^m}_{n^2} \cap \big(\bigcap_{\ell=1}^N \bigcap_{p=1}^{m^\ell} \ker \mathrm{L}^\ell_p \big), \quad n \in \Z_+.
\end{align*}
On time, recall that for $\xi=(\xi_1,\ldots,\xi_m) \in \Z^m$ we have that
\begin{align} \label{system_app_eigenvec}
  \mathrm{L}_p^\ell e^{ix\xi} = i\bigg(\xi_{j^\ell_p} + \sum_{q=1}^{d^\ell} \lambda^\ell_{qp} \xi_{i^\ell_q}\bigg)e^{ix\xi}, \quad \forall \ell \in \{1,\ldots,N\}, \ p \in \{1,\ldots,m^\ell\}
\end{align}
which in turn naturally leads one to consider the following set that will be important for us:
\begin{align*}
  \Gamma &\dfn \Big\{\xi=(\xi_1,\ldots,\xi_m) \in \Z^m \st \xi_{j^\ell_p} + \sum_{q=1}^{d^\ell} \lambda^\ell_{qp} \xi_{i^\ell_q} = 0, \quad \forall \ell \in \{1,\ldots,N\}, \ p \in \{1,\ldots,m^\ell\} \Big\}.  
\end{align*}

A simple computation shows the next result, so its proof will be omitted.
\begin{Lem} \label{charact_ker_perp} If $n \in\Z_+$ then
  \begin{align*}
    \mathcal{A}_{n^2} &= \Span_\C \left\{e^{ix\xi} \st \text{$|\xi|=n$ and $\xi \in \Gamma$} \right\}  
  \end{align*}
  and
  \begin{align*}
    \mathcal{A}_{n^2}^\bot &= \Span_\C \left\{e^{ix\xi} \st  \text{$|\xi|=n$ and $\xi \notin \Gamma$} \right\}.
  \end{align*}
\end{Lem}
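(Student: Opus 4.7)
The plan is to argue by direct computation in the orthonormal basis of $E^{\mathbb{T}^m}_{n^2}$ given by the exponentials $\{e^{ix\xi} \st \xi\in\Z^m, \ |\xi|=n\}$. Any $f\in E^{\mathbb{T}^m}_{n^2}$ can be uniquely written as $f = \sum_{|\xi|=n} c_\xi e^{ix\xi}$, and by~\eqref{system_app_eigenvec} each $\mathrm{L}^\ell_p$ acts diagonally in this basis, with eigenvalue $i\bigl(\xi_{j^\ell_p} + \sum_q \lambda^\ell_{qp}\xi_{i^\ell_q}\bigr)$ on the exponential with frequency $\xi$. In particular, the $\mathrm{L}^\ell_p$'s preserve $E^{\mathbb{T}^m}_{n^2}$ and act as commuting diagonal operators there.

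First I would verify the characterization of $\mathcal{A}_{n^2}$. By Lemma~\ref{Lem:kernel_a_ell} we have $\mathcal{A}_{n^2} = E^{\mathbb{T}^m}_{n^2} \cap \bigcap_{\ell,p} \ker \mathrm{L}^\ell_p$, so applying an arbitrary $\mathrm{L}^\ell_p$ to $f$ above and using uniqueness of the Fourier expansion we obtain $\mathrm{L}^\ell_p f = 0$ if and only if $c_\xi\bigl(\xi_{j^\ell_p}+\sum_q \lambda^\ell_{qp}\xi_{i^\ell_q}\bigr)=0$ for every $|\xi|=n$. Imposing this simultaneously for all $(\ell,p)$ yields $c_\xi=0$ whenever $\xi\notin\Gamma$, which is exactly the first claimed identity.

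For the orthogonal complement, I would invoke the orthogonality of distinct exponentials in $L^2(\mathbb{T}^m)$: the partition of $\{\xi\in\Z^m \st |\xi|=n\}$ into $\Gamma$ and its complement yields an $L^2$-orthogonal decomposition of $E^{\mathbb{T}^m}_{n^2}$ into $\mathcal{A}_{n^2}$ and $\Span_{\C}\{e^{ix\xi} \st |\xi|=n, \ \xi\notin\Gamma\}$. Since the latter summand is orthogonal to $\mathcal{A}_{n^2}$ and together they exhaust $E^{\mathbb{T}^m}_{n^2}$, it must coincide with $\mathcal{A}_{n^2}^\bot$, settling the second identity.

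There is essentially no obstacle here: the whole statement is a bookkeeping exercise once one notices that $\{e^{ix\xi}\}_{|\xi|=n}$ simultaneously diagonalizes all the $\mathrm{L}^\ell_p$'s and is orthonormal. The only mild care required is to pass correctly from ``$\mathrm{L}^\ell_p$ annihilates $e^{ix\xi}$'' to the defining linear conditions of $\Gamma$, which is immediate from~\eqref{system_app_eigenvec}.
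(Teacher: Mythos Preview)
Your proposal is correct and is precisely the ``simple computation'' the paper alludes to: the paper actually omits the proof entirely, stating only that a simple computation shows the result. Your argument via diagonalization of the $\mathrm{L}^\ell_p$ in the exponential basis and the orthogonal splitting induced by the partition $\{\xi : |\xi|=n\} = (\Gamma \cap \{|\xi|=n\}) \sqcup (\Gamma^c \cap \{|\xi|=n\})$ is exactly what is intended.
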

The almost global hypoellipticity of $\mathcal{L}$ in $G$ is characterized by~\eqref{eq:agh_system}, a condition that, in the torus, may be expressed in terms of traditional Diophantine inequalities.
\begin{Prop} Inequality \eqref{eq:agh_system} is equivalent to the existence of $C',\rho'>0$ such that
 \begin{align} \label{agh_ineq2}
   \bigg(\sum_{\ell=1}^N\sum_{p=1}^{m^\ell}\big|\xi_{j^\ell_p} + \sum_{q=1}^{d^\ell} \lambda^\ell_{qp} \xi_{i^\ell_q}\big|^2\bigg)^{\frac{1}{2}} &\geq C'(1+|\xi|^2)^{-\rho'}, \quad \forall \xi \in \Z^m\setminus \Gamma.
 \end{align}
\end{Prop}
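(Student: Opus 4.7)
The strategy is to exploit the fact that the joint eigenvectors of $\vv{L}_1^\ell, \ldots, \vv{L}_{m^\ell}^\ell$ (for all $\ell$) in $E_{n^2}^{\mathbb{T}^m}$ are precisely the exponentials $e^{ix\xi}$ with $|\xi|=n$, which are also pairwise $L^2$-orthogonal. Thanks to~\eqref{system_app_eigenvec} and Lemma~\ref{charact_ker_perp}, once a general $\phi\in\mathcal{A}_{n^2}^\bot$ is expanded as $\phi=\sum_{|\xi|=n,\ \xi\notin\Gamma} c_\xi\, e^{ix\xi}$ we obtain at once
\begin{align*}
  \|\phi\|_{L^2(\TT^m)}^2 &= (2\pi)^m \sum_{|\xi|=n,\ \xi\notin\Gamma} |c_\xi|^2, \\
  \sum_{\ell=1}^N\sum_{p=1}^{m^\ell} \|\vv{L}_p^\ell\phi\|_{L^2(\TT^m)}^2 &= (2\pi)^m \sum_{|\xi|=n,\ \xi\notin\Gamma} |c_\xi|^2 \, S(\xi),
\end{align*}
where $S(\xi) \dfn \sum_{\ell=1}^N \sum_{p=1}^{m^\ell} \big| \xi_{j_p^\ell} + \sum_{q=1}^{d^\ell} \lambda_{qp}^\ell \xi_{i_q^\ell} \big|^2$.

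For the implication \eqref{agh_ineq2}$\Rightarrow$\eqref{eq:agh_system}, I would simply plug the pointwise lower bound $S(\xi)\geq (C')^2(1+|\xi|^2)^{-2\rho'}$ into the identity above: since $|\xi|^2=n^2=\lambda$ for every $\xi$ contributing to $\phi$, we factor out $(1+\lambda)^{-2\rho'}$ and recover \eqref{eq:agh_system} with the same $C',\rho'$.

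For the reverse implication \eqref{eq:agh_system}$\Rightarrow$\eqref{agh_ineq2}, given $\xi\in\Z^m\setminus\Gamma$ set $n\dfn|\xi|$ and test \eqref{eq:agh_system} against $\phi \dfn e^{ix\xi}\in\mathcal{A}_{n^2}^\bot$ (which lies in $\mathcal{A}_{n^2}^\bot$ precisely because $\xi\notin\Gamma$). The identities above collapse to a single term and, after cancelling the common nonzero factor $\|e^{ix\xi}\|_{L^2}$, yield $\sqrt{S(\xi)}\geq C(1+n^2)^{-\rho} = C(1+|\xi|^2)^{-\rho}$, which is exactly \eqref{agh_ineq2} with $C'=C$ and $\rho'=\rho$.

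There is really no hard step here: everything reduces to the simultaneous diagonalisation of the system on the exponential basis together with the identification $\lambda=|\xi|^2$. The only tiny care needed is the bookkeeping in the degenerate cases $m^\ell\in\{0,m\}$ (where the sums defining $\mathrm{L}_p^\ell$, respectively $\Gamma$, must be interpreted as empty), but this is purely notational and does not affect the argument.
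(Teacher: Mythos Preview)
Your proposal is correct and follows essentially the same route as the paper's own proof: both directions rest on expanding $\phi\in\mathcal{A}_{n^2}^\bot$ in the orthogonal exponential basis, using~\eqref{system_app_eigenvec} to compute $\sum_{\ell,p}\|\vv{L}_p^\ell\phi\|_{L^2}^2$ as a weighted sum of $|c_\xi|^2$, and then either testing with a single $e^{ix\xi}$ (for \eqref{eq:agh_system}$\Rightarrow$\eqref{agh_ineq2}) or plugging in the pointwise bound and factoring out $(1+n^2)^{-2\rho'}$ (for the converse). The only cosmetic discrepancy is the factor $(2\pi)^m$, since the paper normalises $G=\TT^m$ to have total measure $1$; this is harmless as it cancels throughout.
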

\begin{proof} Suppose that~\eqref{eq:agh_system} holds true. If $\xi \in \Z^m\setminus \Gamma$ then we set $n \dfn |\xi|$ and from Lemma~\ref{charact_ker_perp} we have that $e^{ix\xi} \in \mathcal{A}^\bot_{n^2}$, hence~\eqref{system_app_eigenvec} and~\eqref{eq:agh_system} imply that
  \begin{align*}
    \sum_{\ell=1}^N\sum_{p=1}^{m^\ell}\big|\xi_{j^\ell_p} + \sum_{q=1}^{d^\ell} \lambda^\ell_{qp} \xi_{i^\ell_q}\big|^2 &= \sum_{\ell=1}^N\sum_{p=1}^{m^\ell} \Big\|i\Big(\xi_{j^\ell_p} + \sum_{q=1}^{d^\ell} \lambda^\ell_{qp} \xi_{i^\ell_q}\Big)e^{ix\xi}\Big\|^2_{L^2(\mathbb T^m)} \\
    &= \sum_{\ell=1}^N\sum_{p=1}^{m^\ell} \Big\|\mathrm{L}_p^\ell e^{ix\xi}\Big\|^2_{L^2(\mathbb T^m)}\\
    &\ge C^2(1+n^2)^{-2\rho}\\
    &= C^2(1+|\xi|^2)^{-2\rho}
  \end{align*}
  and we obtain~\eqref{agh_ineq2} with $C'\dfn C$ and $\rho' \dfn \rho$.

  Conversely, for fixed $n\in\Z_+$ we may write (see Lemma~\ref{charact_ker_perp}) every $\phi \in \mathcal{A}_{n^2}^\bot$ as
  \begin{align*}
    \phi &= \sum_{\substack{|\xi|=n\\ \xi\notin\Gamma}} \phi_\xi e^{ix\xi}, \quad \text{where $\phi_\xi \dfn \langle \phi, e^{ix\xi} \rangle_{L^2(\mathbb T^m)}$}.  
  \end{align*}
  By orthogonality of the exponentials
  \begin{align*}
    \|\phi\|_{L^2(\mathbb T^m)}^2 &= \sum_{\substack{|\xi|=n\\ \xi\notin\Gamma}} |\phi_\xi|^2  
  \end{align*}
  and from~\eqref{agh_ineq2} and~\eqref{system_app_eigenvec} it follows that
  \begin{align*}
    \sum_{\ell=1}^N\sum_{p=1}^{m^\ell}\|\mathrm{L}_p^\ell \phi\|_{L^2(\mathbb{T}^m)}^2 &= \sum_{\ell=1}^N\sum_{p=1}^{m^\ell}\big\|\sum_{\substack{|\xi|=n\\ \xi\notin\Gamma}} \phi_\xi \mathrm{L}_p^\ell e^{ix\xi}\big\|_{L^2(\mathbb{T}^m)}^2\\
    &=\sum_{\ell=1}^N\sum_{p=1}^{m^\ell}\big\|\sum_{\substack{|\xi|=n\\ \xi\notin\Gamma}} \phi_\xi \big(\xi_{j^\ell_p} + \sum_{q=1}^{d^\ell} \lambda^\ell_{qp} \xi_{i^\ell_q}\big)e^{ix\xi}\big\|_{L^2(\mathbb{T}^m)}^2\\
    &=\sum_{\ell=1}^N\sum_{p=1}^{m^\ell}\sum_{\substack{|\xi|=n\\ \xi\notin\Gamma}} \big|\phi_\xi\big|^2 \big| \xi_{j^\ell_p} + \sum_{q=1}^{d^\ell} \lambda^\ell_{qp} \xi_{i^\ell_q} \big|^2\\
    &=\sum_{\substack{|\xi|=n\\ \xi\notin\Gamma}} |\phi_\xi|^2 \sum_{\ell=1}^N\sum_{p=1}^{m^\ell} \big| \xi_{j^\ell_p} + \sum_{q=1}^{d^\ell} \lambda^\ell_{qp} \xi_{i^\ell_q} \big|^2\\
    &\ge \sum_{\substack{|\xi|=n\\ \xi\notin\Gamma}} |\phi_\xi|^2 \big(C'(1+|\xi|^2)^{-\rho'}\big)^2\\
    &=  (C')^2\big(1+n^2\big)^{-2\rho'} \|\phi\|^2_{L^2(\mathbb T^m)}
  \end{align*}
  from where the conclusion follows easily.
\end{proof}

In particular, Theorem~\ref{Thm:main-P} generalizes condition~$\mathrm{(III)}_2$ in~\cite[Theorem~1.1]{petronilho02} -- so, together with the results in~\cite{afr20}, we have given a complete characterization of global solvability of the more general operator $P$ in~\eqref{eq:Pdef_torus}, much in the same lines as in Petronilho's work.

\section{Propagation of regularity} \label{sec:singularities}

In this final section we take advantage of the machinery developed so far to prove a result on propagation of regularity for $P$ given by~\eqref{eq:Pdef}. We will continue to assume that $G$ is a Lie group and $T$ is a smooth manifold enjoying the properties stated at the beginning of Section~\ref{sec:partial_exp}, and also carrying Riemannian metrics satisfying the assumptions in Section~\ref{Sec:tube-type}. We stress that hypothesis~\eqref{it:thm15_hyp1} is not required here.

Recall that $\tilde{P}=\Delta_T - \sum_{\ell = 1}^N \vv{W}_\ell^2$ is an elliptic operator in $T$ so~\cite[Corollary~4.2]{afr20} ensures that $\mathcal F_\lambda^G(u) \in \cinfty(T;E_\lambda^G)$ when $u \in \D'(T\times G)$ is such that $Pu \in \cinfty(T\times G)$. In this section we shall consider the following type of ``local condition'' in $T$:
\begin{quote} There is an open subset $U \subset T$ such that for every $\rho\in \R$ we can find $C_\rho>0$ such that
  \begin{align} \label{eq:regnumponto}
    \|\mathcal{F}_{\lambda}^{G}(u)\|_{L^{2}(U\times G)} &\leq C_\rho(1+\lambda)^{-\rho}, \quad \forall \lambda\in \sigma(\Delta_G).
  \end{align}
\end{quote}
In order to prove our main result of this section we need an inequality that is a consequence of~\cite[Proposition~6.2]{afr20}: given an open subset $U\subset T$, there exists $C>0$ such that
\begin{align}
  \|\psi\|^2_{L^2(T)} &\leq C (\left\|\psi\right\|^2_{L^2(U)} + \|\dd_T\psi \|^2_{L^2(T)}  ), \quad \forall \psi \in \cinfty(T). \label{eq:calculus}
\end{align}
Now we can state and prove our main result of this section.
\begin{Thm} \label{thm:propagation} If $u \in \D'(T\times G)$ satisfies $Pu\in \cinfty(T\times G)$ and there exists an open set $U \subset T$ on which condition~\eqref{eq:regnumponto} holds then $u \in \cinfty(T \times G)$.
\end{Thm}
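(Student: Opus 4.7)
The plan is to apply Proposition~\ref{cor:partial_smoothness_converse}, which reduces the smoothness of $u$ on $T \times G$ to two separate decay statements on its partial Fourier data: rapid decay of $A_\lambda \dfn \|\mathcal F_\lambda^G(u)\|_{L^2(T \times G)}$ in $\lambda$, and rapid decay of $\|\mathcal F_\mu^T \mathcal F_\lambda^G(u)\|_{L^2(T \times G)}$ on some region $\Lambda_\theta$. The second condition is handed to us for free by the ellipticity of $\tilde P$ via~\cite[Corollary~4.4]{afr20}, exactly as in the proof of Proposition~\ref{Pro:GHaperpimplyGSaperp}. The real task is therefore to upgrade the localized decay hypothesis~\eqref{eq:regnumponto} on $U$ to a global rapid decay of $A_\lambda$.

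First I would note that since $\tilde P$ is elliptic and $Pu \in \cinfty(T \times G)$, \cite[Corollary~4.2]{afr20} yields $\mathcal F_\lambda^G(u) \in \cinfty(T; E_\lambda^G)$ for every $\lambda \in \sigma(\Delta_G)$; in particular $A_\lambda < \infty$. Expanding $\mathcal F_\lambda^G(u) = \sum_i \mathcal F_\lambda^G(u)_i \otimes \phi_i^\lambda$ in an orthonormal basis of $E_\lambda^G$, applying the Poincar\'{e}-type inequality~\eqref{eq:calculus} componentwise to each scalar $\mathcal F_\lambda^G(u)_i$ and summing --- while recognizing, as in the proof of Proposition~\ref{Pro:kernel}, that $\sum_i \|\dd_T \mathcal F_\lambda^G(u)_i\|^2_{L^2(T)} = \langle \Delta_T \mathcal F_\lambda^G(u), \mathcal F_\lambda^G(u) \rangle_{L^2(T \times G)}$ --- would give
\begin{align*}
A_\lambda^2 \leq C\bigl( \|\mathcal F_\lambda^G(u)\|^2_{L^2(U \times G)} + \langle \Delta_T \mathcal F_\lambda^G(u), \mathcal F_\lambda^G(u) \rangle_{L^2(T \times G)} \bigr).
\end{align*}
Then the energy identity~\eqref{Pro:kernel-eq1} together with non-negativity of each $\|\vv Y_\ell \mathcal F_\lambda^G(u)\|_{L^2(T \times G)}^2$ supplies $\langle \Delta_T \mathcal F_\lambda^G(u), \mathcal F_\lambda^G(u) \rangle \leq \langle P \mathcal F_\lambda^G(u), \mathcal F_\lambda^G(u) \rangle$; using that $P$ commutes with $\mathcal F_\lambda^G$~\cite[Proposition~2.11]{afr20} and Cauchy--Schwarz,
\begin{align*}
\langle P \mathcal F_\lambda^G(u), \mathcal F_\lambda^G(u) \rangle = \langle \mathcal F_\lambda^G(Pu), \mathcal F_\lambda^G(u) \rangle \leq B_\lambda A_\lambda,
\end{align*}
where $B_\lambda \dfn \|\mathcal F_\lambda^G(Pu)\|_{L^2(T \times G)}$. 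Writing also $U_\lambda \dfn \|\mathcal F_\lambda^G(u)\|_{L^2(U \times G)}$, the two estimates combine into the quadratic inequality $A_\lambda^2 - C B_\lambda A_\lambda - C U_\lambda^2 \leq 0$, whose resolution (using $A_\lambda \geq 0$) yields $A_\lambda \leq C B_\lambda + \sqrt{C}\, U_\lambda$.

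To conclude, since $Pu \in \cinfty(T \times G)$ the sequence $B_\lambda$ decays faster than any polynomial (for instance via~\cite[Corollary~2.9]{afr20}), while the hypothesis~\eqref{eq:regnumponto} grants the same rapid decay for $U_\lambda$. Hence $A_\lambda$ is itself rapidly decaying, which establishes the first hypothesis of Proposition~\ref{cor:partial_smoothness_converse}; combined with the already mentioned anisotropic estimate on $\Lambda_\theta$, this gives $u \in \cinfty(T \times G)$.

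The main subtlety is that we do not have, and do not wish to rely on, any \emph{a priori} polynomial bound for $A_\lambda$: what does the work is the \emph{quadratic} inequality produced by pairing the Poincar\'e-type estimate~\eqref{eq:calculus} with the energy identity~\eqref{Pro:kernel-eq1}, which forces $A_\lambda$ down to the decay rate of the better-behaved sequences $B_\lambda$ and $U_\lambda$ with no additional input.
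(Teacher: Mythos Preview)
Your proof is correct and follows essentially the same route as the paper's: both combine the componentwise Poincar\'e-type estimate~\eqref{eq:calculus} with the energy identity~\eqref{Pro:kernel-eq1} (bounding $\langle \Delta_T \mathcal F_\lambda^G(u), \mathcal F_\lambda^G(u)\rangle$ by $\langle \mathcal F_\lambda^G(Pu), \mathcal F_\lambda^G(u)\rangle$ via Cauchy--Schwarz) to produce the same quadratic inequality in $A_\lambda$, and then invoke Proposition~\ref{cor:partial_smoothness_converse}. The only cosmetic difference is the absorption step: the paper uses the elementary bound $CB_\lambda A_\lambda \leq \tfrac{C^2}{2}B_\lambda^2 + \tfrac{1}{2}A_\lambda^2$ to swallow the $A_\lambda$ on the right, whereas you solve the quadratic directly --- both yield the desired rapid decay of $A_\lambda$.
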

\begin{proof} By~\cite[Corollary~4.4]{afr20} for every $s>0$ there are $C_s>0$ and $\theta\in (0,1)$ such that
  \begin{align}\label{eq:prop-est-logcone}
    \|\mathcal{F}_\mu^T \mathcal{F}^G_\lambda(u)\|_{L^2(T \times G)} &\leq C_s(1+ \mu +\lambda)^{-s}
  \end{align}
  for every $(\mu, \lambda)\in \Lambda_\theta$, where $\Lambda_\theta$ is given by~\eqref{eq:Atheta}. It follows from~\eqref{Pro:kernel-eq1},~\eqref{eq:equality-energy-part} and the Cauchy-Schwarz inequality that
  \begin{align}
    \sum_{j=1}^{d_\lambda^G} \|\dd_T [\mathcal F_\lambda^G (u)_i] \|^2_{L^2(T)} &\leq \|\mathcal{F}_\lambda^G(Pu)\|_{L^{2}(T \times G)}  \| \mathcal{F}_\lambda^G(u)\|_{L^{2}(T\times G)}.\label{eq:menorqueoproduto}
  \end{align}
  By~\eqref{eq:calculus} applied to $\psi = \mathcal F_\lambda^G(u)_i$ we have
  \begin{align*}
    \| \mathcal F_\lambda^G(u)_i \|_{L^2(T)}^2 &\leq C\left(\| \mathcal F_\lambda^G(u)_i \|_{L^2(U)}^2 + \|\dd_T [\mathcal F_\lambda^G(u)_i] \|_{L^2(T)}^2\right),
  \end{align*}
  from where we conclude, also using~\eqref{eq:menorqueoproduto}, that
  \begin{align*}
    \sum_{i=1}^{d_\lambda^G} \| \mathcal F_\lambda^G(u)_i \|_{L^2(T)}^2 &\leq  C\Big(\sum_{i=1}^{d_\lambda^G} \| \mathcal F_\lambda^G(u)_i \|_{L^2(U)}^2 +  \|\mathcal{F}_\lambda^G(Pu)\|_{L^{2}(T \times G)}  \| \mathcal{F}_\lambda^G(u)\|_{L^{2}(T\times G)}\Big),
  \end{align*}
  which in turn gives
  \begin{align}\label{eq:equcominte}
    \|\mathcal F_\lambda^G(u) \|_{L^2(T\times G)}^2 &\leq C\Big(\|\mathcal F_\lambda^G(u)\|_{L^2(U\times G)}^2+ \|\mathcal{F}_\lambda^G(Pu)\|_{L^{2}(T \times G)}  \| \mathcal{F}_\lambda^G(u)\|_{L^{2}(T\times G)}\Big).
  \end{align}
  Indeed, recall that $T$ and $G$ have total measure equal to $1$ and the following equality for an arbitrary open set $V\subset T$:
  \begin{align*}
    \|\mathcal F_\lambda^G(u) \|_{L^2(V\times G)}^2 = \left\|\sum_{i=1}^{d_\lambda^G} \mathcal F_\lambda^G(u)_i \otimes \phi_i^\lambda \right\|_{L^2(V\times G)}^2 = \sum_{i=1}^{d_\lambda^G} \|\mathcal F_\lambda^G(u)_i \|_{L^2(V)}^2.
  \end{align*}
  
  Now we use~\eqref{eq:regnumponto} and~\eqref{eq:equcominte} to conclude that for every $\rho\in \R$ we can find $C_\rho>0$ such that
  \begin{align*}
    \|\mathcal F_\lambda^G(u) \|_{L^2(T\times G)}^2 &\leq C_\rho^2(1+\lambda)^{-2\rho}+C\|\mathcal{F}_\lambda^G(Pu)\|_{L^{2}(T \times G)}  \| \mathcal{F}_\lambda^G(u)\|_{L^{2}(T\times G)}\\
    &\leq C_\rho^2(1+\lambda)^{-2\rho}+ \frac{C^2}{2}\|\mathcal{F}_\lambda^G(Pu)\|_{L^{2}(T \times G)}^2 + \frac{1}{2}\|\mathcal{F}_\lambda^G(u)\|_{L^{2}(T\times G)}^2, \quad \forall \lambda \in \sigma(\Delta_G)
  \end{align*}
  from which we conclude, after changing $C_\rho$ if necessary, that
  \begin{align*}
    \|\mathcal F_\lambda^G(u) \|_{L^2(T\times G)}^2 &\leq 2C_\rho^2(1+\lambda)^{-2\rho}+ C^2 \|\mathcal{F}_\lambda^G(Pu)\|_{L^{2}(T \times G)}^2.
  \end{align*}
  Since $Pu \in \cinfty(T\times G)$ we may use~\cite[Corollary~2.9]{afr20} to infer a similar decay in the second term above and conclude that given $\rho>0$, there exists $C_\rho'>0$ such that
  \begin{align}
    \label{Eq:est-fourier-partial-prop'}
    \|\mathcal{F}_\lambda^{G}(u)\|_{L^{2}(T\times G)} &\leq C_\rho'(1+\lambda)^{-\rho}, \quad \forall \lambda \in \sigma(\Delta_G)
  \end{align}
  and now~\eqref{eq:prop-est-logcone} and~\eqref{Eq:est-fourier-partial-prop'} allow us to apply Proposition~\ref{cor:partial_smoothness_converse} and conclude that $u \in \cinfty(T\times G)$.
\end{proof}
\bibliographystyle{plain}
\bibliography{bibliography}
\end{document}